\documentclass[10pt]{amsart}
\usepackage{pinlabel}
\usepackage{amsmath, amsthm, amssymb, amscd, mathrsfs, eucal, epsfig, color, graphicx, subcaption}
\usepackage{hyperref}
\usepackage[margin=1in]{geometry}

\usepackage{url}
\hypersetup{breaklinks=true}
\urlstyle{same}

\usepackage{setspace, mathtools}

\usepackage{rotating}

\usepackage{tikz-cd}

\onehalfspacing

\begin{document}

\newtheorem{theorem}{Theorem}[section]
\newtheorem{lemma}[theorem]{Lemma}
\newtheorem{corollary}[theorem]{Corollary}
\newtheorem{conjecture}[theorem]{Conjecture}
\newtheorem{proposition}[theorem]{Proposition}
\newtheorem{question}[theorem]{Question}
\newtheorem*{answer}{Answer}
\newtheorem{problem}[theorem]{Problem}
\newtheorem*{claim}{Claim}
\newtheorem*{criterion}{Criterion}

\newtheorem*{fact}{Fact}

\theoremstyle{definition}
\newtheorem{definition}[theorem]{Definition}
\newtheorem{construction}[theorem]{Construction}
\newtheorem{notation}[theorem]{Notation}
\newtheorem{convention}[theorem]{Convention}
\newtheorem*{warning}{Warning}
\newtheorem*{assumption}{Simplifying Assumptions}

\theoremstyle{remark}
\newtheorem{remark}[theorem]{Remark}
\newtheorem{example}[theorem]{Example}
\newtheorem{scholium}[theorem]{Scholium}
\newtheorem*{case}{Case}

\newcommand\tn[1]{\textnormal{#1}}

\newcommand\Z{\mathbb Z}
\newcommand\Q{\mathbb Q}
\newcommand\R{\mathbb R}
\newcommand\C{\mathbb C}
\renewcommand\H{\mathbb H}
\renewcommand\SS{\mathbb S}
\newcommand\E{\mathbb E}
\newcommand\SL{\textnormal{SL}}
\newcommand\PSL{\textnormal{PSL}}
\newcommand\GL{\textnormal{GL}}

\newcommand\Hom{\textnormal{Hom}}

\newcommand\into{\hookrightarrow}
\newcommand\onto{\twoheadrightarrow}

\title{Taut sutured handlebodies as twisted homology products}
\author[Margaret Nichols]{Margaret Nichols}
\address{Department of Mathematics \\ University of Chicago \\
Chicago, Illinois, 60637}
\email{mnichols@math.uchicago.edu}

\begin{abstract}
Friedl and Kim show any taut sutured manifold can be realized as a twisted homology product, but their proof gives no practical description of how complicated the realizing representation needs to be. We give a number of results illustrating the relationship between the topology of a taut sutured handlebody and the complexity of a representation realizing it as a homology product.
\end{abstract}

\maketitle

\section{Introduction}\label{s:intro}

A {\em sutured} $3$-manifold $(M,\gamma)$ is a manifold with boundary marked by a set of sutures, $\gamma$, which consists of oriented curves dividing $\partial M$ into oriented collections of components $R_+$ and $R_-$.\footnote{We note Gabai's original definition allowed sutures to consist of entire torus components of the boundary. Here we are interested in sutured handlebodies, and this aspect of the definition never arises.}

Gabai~\cite{gabai} introduced the notion of a \emph{taut} sutured manifold $(M,\gamma)$, which, roughly speaking, requires $M$ to be irreducible and the boundary components $R_\pm$ to be of minimal complexity.

Under suitable hypotheses, a sutured manifold is taut if $R_+$ and $R_-$ realize the Thurston norm of their (common) homology class. Here is an important example. Suppose $K$ is a knot in $S^3$, and let $R$ be a Seifert surface for $K$. Cutting $S^3$ open along $R$ produces a sutured manifold $M$ whose boundary decomposes along the knot $K$ into two copies $R_\pm$ of $R$. This
sutured manifold is taut precisely when $R$ is of minimal genus. Thus the theory of sutured manifolds can be (and is) used to compute knot genus.

Suppose $M$ is a sutured manifold, and $\alpha:\pi_1(M) \to \tn{GL}(V)$ is a representation. Then $\alpha$ restricts to representations $\pi_1(R_\pm) \to \tn{GL}(V)$, and we can define the twisted homology $H_*(M;E_\alpha)$, $H_*(R_\pm;E_\alpha)$. We say
that $M$ is an {\em $\alpha$-homology product} if the maps
$H_*(R_\pm,E_\alpha) \to H_*(M,E_\alpha)$ induced by inclusion are all isomorphisms. 
If $\alpha$ is not specified, we say $M$ is a {\em twisted homology product}.

This concept is important, because of

\begin{theorem}[Friedl-Kim \cite{FK13}]
If $M$ is a twisted homology product, it is taut.
\end{theorem}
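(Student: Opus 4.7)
My plan is to prove the contrapositive: if $M$ fails to be taut, then for every representation $\alpha$, at least one inclusion $H_*(R_\pm;E_\alpha)\to H_*(M;E_\alpha)$ fails to be an isomorphism. Tautness of $(M,\gamma)$ has three ingredients: (i) $M$ is irreducible, (ii) $R_\pm$ is incompressible, and (iii) $R_\pm$ realizes the Thurston norm of $[R_\pm]\in H_2(M,\partial M)$. I would handle failures of these three conditions in turn.

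For failures of (i) and (ii) the obstructions can be produced essentially by hand. An essential $2$-sphere $S\subset M$ has $\pi_1(S)=1$, so its twisted homology reduces to ordinary homology with $V$ coefficients, giving a class in $H_2(S;E_\alpha)\cong V$ whose image in $H_2(M;E_\alpha)$ is nonzero and cannot be hit from $R_\pm$ (since $S$ is not parallel to $\partial M$). A compressing disk for $R_\pm$ can likewise be used to exhibit a class in $H_1$ that is visible in $M$ but not in $R_\pm$. Either way the isomorphism condition fails for any $\alpha$.

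The heart of the argument is the failure of (iii). Suppose $M$ is irreducible with $R_\pm$ incompressible, but $R_+$ does not minimize the Thurston norm; pick a surface $R'$ with $[R']=[R_+]\in H_2(M,\partial M)$ and $x(R')<-\chi(R_+)$, and let $\phi\in H^1(M;\Z)$ be Poincar\'e dual to $R_+$. The plan is to invoke the twisted Alexander polynomial $\Delta_\alpha^\phi(t)$: the homology product hypothesis identifies the twisted chain complex of $M$ with that of $R_+$ up to quasi-isomorphism, which pins down $\deg\Delta_\alpha^\phi(t)=\dim(V)\cdot(-\chi(R_+))$. Combined with the general Thurston-norm inequality $\deg\Delta_\alpha^\phi(t)\le\dim(V)\cdot x(\phi)$ (McMullen, extended to the twisted setting by Friedl--Kim in earlier work) and the trivial bound $x(\phi)\le x(R')$, this yields the contradiction $-\chi(R_+)\le x(R')<-\chi(R_+)$.

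The main obstacle lies in step (iii), since coarser numerical invariants are too weak: the identity $\chi(M;E_\alpha)=\dim(V)\cdot\chi(R_+)$ is already forced by the sutured structure (via $\chi(M)=\tfrac12\chi(\partial M)$) and detects nothing about minimal complexity. Turning the homology product hypothesis into the sharp bound on $\deg\Delta_\alpha^\phi(t)$ requires the finer machinery of twisted Reidemeister torsion or, alternatively, a careful Mayer--Vietoris comparison along a sutured manifold hierarchy built from $R'$; I expect this is where the technical work of Friedl--Kim concentrates.
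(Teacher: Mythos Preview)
The paper does not contain a proof of this theorem. It is stated in the introduction and again in Section~\ref{s:defs} purely as a citation to Friedl--Kim \cite{FK13}; the paper's original contributions concern the \emph{complexity} of certifying representations, and it simply imports this direction of the equivalence as a black box. So there is no ``paper's own proof'' to compare your proposal against.

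That said, a brief assessment of your sketch on its merits: your overall architecture---contrapositive, separating irreducibility/incompressibility from the norm-minimizing condition, and attacking the latter via degree bounds on twisted Alexander polynomials---is indeed the shape of the Friedl--Kim argument. Your identification of the McMullen-type inequality $\deg\Delta_\alpha^\phi(t)\le\dim(V)\cdot x(\phi)$ (in its twisted form) as the crux is accurate, and your acknowledgement that turning the homology-product hypothesis into the matching lower bound on degree is where the work lies is honest.

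Two places where the sketch is looser than a proof: for (i), an essential sphere need not give a nonzero class in $H_2(M;E_\alpha)$ (it can be homologically trivial even with twisted coefficients, e.g.\ a connect-sum sphere), so the obstruction has to be located more carefully; and for (ii), ``a compressing disk \ldots\ can likewise be used'' is a gesture, not an argument---one standard route is to note that compressibility forces $\pi_1(R_\pm)\to\pi_1(M)$ to fail injectivity and then exhibit a nontrivial kernel in $H_1$ with suitable coefficients. These are repairable, but as written they are gaps.
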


Conversely, using Agol's Virtual Fibering Theorem (\cite{Agol}), they show

\begin{theorem}[Friedl-Kim \cite{FK13}]
If M is taut, it is a twisted homology product for some $\alpha$.
\end{theorem}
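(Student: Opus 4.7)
The plan is to produce a finite cover $\widetilde{M}\to M$ that is a \emph{product} sutured manifold and then leverage this, via an induced (permutation) representation and Shapiro's lemma, to construct a representation $\alpha$ of $\pi_1(M)$ for which $M$ is an $\alpha$-homology product.

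The first step, and the main obstacle, is to establish a sutured analogue of Agol's Virtual Fibering Theorem: a taut $(M,\gamma)$ admits a finite cover $(\widetilde{M},\widetilde{\gamma})$ with $\widetilde{M}\cong \widetilde{R}\times I$ and $\widetilde{R}_{\pm}=\widetilde{R}\times\{0,1\}$. My approach would be to close $M$ up to a closed $3$-manifold $N$ carrying $R$ as a norm-minimizing surface in a chosen cohomology class (for instance by identifying $R_+$ with $R_-$ via a suitable homeomorphism, or by an appropriate doubling), invoke Agol's theorem through the virtual specialness / RFRS machinery to obtain a finite cover of $N$ in which the chosen class becomes fibered with a lift of $R$ as a fiber, and then cut this cover open along the lifted $R$ to recover a finite cover of $(M,\gamma)$ of the desired product form. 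The delicate point is choosing the closure so that the relevant surface genuinely remains norm-minimizing there and so that cutting the resulting virtual fibration back open returns a product whose sutures match the pullback of $\gamma$; this is where Agol's theorem does the real work, and coping with the fact that the chosen closure need not itself be hyperbolic is what forces one into the RFRS formulation rather than the bare hyperbolic statement.

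Given such a cover, the remainder is essentially formal. Let $H=\pi_1(\widetilde{M})\le\pi_1(M)$, set $V=\mathbb{C}[\pi_1(M)/H]$, and define $\alpha:\pi_1(M)\to\GL(V)$ to be the permutation action on cosets. Shapiro's lemma then identifies $H_*(M;E_\alpha)\cong H_*(\widetilde{M};\mathbb{C})$, and decomposing $\alpha|_{\pi_1(R_\pm)}$ by double cosets $\pi_1(R_\pm)\backslash \pi_1(M)/H$ (via Mackey's formula) yields $H_*(R_\pm;E_\alpha)\cong H_*(\widetilde{R}_\pm;\mathbb{C})$, where $\widetilde{R}_\pm$ is the (possibly disconnected) preimage of $R_\pm$ in $\widetilde{M}$. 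Since $\widetilde{M}\cong \widetilde{R}\times I$, the inclusions $\widetilde{R}_\pm\hookrightarrow\widetilde{M}$ are homotopy equivalences and in particular induce isomorphisms on untwisted homology; translating back through Shapiro, the inclusions $R_\pm\hookrightarrow M$ induce isomorphisms on $\alpha$-twisted homology, so $M$ is an $\alpha$-homology product as required.
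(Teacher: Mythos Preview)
The paper does not actually prove this theorem; it is quoted from Friedl--Kim and the only information the paper supplies about the argument is that it ``uses in a key way Agol's virtual fibering'' and that the resulting representation ``may always be taken to be a unitary representation.'' Your outline is precisely the Friedl--Kim strategy as the paper describes it: pass to a closed manifold, apply Agol's virtual fibering/RFRS machinery to obtain a finite cover in which the relevant surface is a fiber, and then take the permutation representation on cosets (which is unitary) together with Shapiro's lemma to transfer the product structure back to twisted homology of $M$. So your proposal agrees with the approach the paper attributes to Friedl--Kim.

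One small caveat: your parenthetical suggestion of ``identifying $R_+$ with $R_-$ via a suitable homeomorphism'' is not always available, since for a taut sutured manifold $R_+$ and $R_-$ need only satisfy $\chi(R_+)=\chi(R_-)$ and need not be homeomorphic. The doubling alternative you mention (or more precisely the Gabai-type closing-up construction Friedl--Kim actually use) is the correct route in general, and you are right to flag that verifying norm-minimality of the glued surface in the closure, and hence that the virtual fiber really restricts to a product over the original piece, is where the genuine work lies.
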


We call such an $\alpha$ {\em certifying} for $M$. The result of Friedl and Kim is not effective, in the sense that it gives no upper or lower bounds for the complexity of a certifying representation. This potentially reduces the practical value of twisted homology as a tool. Therefore, the fundamental question we study in this paper addresses precisely this issue:

\begin{question}
If $M$ is a taut sutured manifold, what is the simplest representation for which it is a twisted homology product, and what is the relationship of the complexity of the representation to the topology of $M$?
\end{question}

For $M$ a hyperbolic manifold, Agol and Dunfield found substantial computer evidence that $M$ is a twisted homology product for the geometric representation $\pi_1(M) \to \tn{SL}_2(\C)$ (\cite{AD}). They conjectured in general that every taut $M$ has a 2-dimensional certifying representation, and proved this for a simple class of manifolds, namely books of $I$-bundles.

For a given $M$ the search for a certifying representation falls into two parts: understanding the linear representations of $\pi_1(M)$, and understanding when such a representation is certifying. To simplify the discussion we restrict attention to the case that $M$ is a handlebody, so that $\pi_1(M)$ is free.

This case is of practical importance, since it often happens that the complement of a minimal genus Seifert surface is a handlebody.

\subsection{Statement of Results}

Our first result classifies the topologically simplest case, where $M$ is genus two, and both $R_\pm$ are once-punctured tori.

\begin{theorem}\label{Thm:ss}
Let $M$ be a taut sutured genus-two handlebody with a single connected suture. Then $M$ is a rational homology product.
\end{theorem}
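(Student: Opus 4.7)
The plan is to reduce the theorem to proving that the inclusion-induced map $H_1(R_+;\mathbb{Q})\to H_1(M;\mathbb{Q})$ is an isomorphism. By Poincar\'e--Lefschetz duality $H_i(M,R_+;\mathbb{Q})\cong H^{3-i}(M,R_-;\mathbb{Q})$, this symmetric condition forces the corresponding statement for $R_-$, and hence that $M$ is a rational homology product. Euler-characteristic arithmetic together with tautness force both $R_\pm$ to be once-punctured tori, so both groups are $\mathbb{Q}^2$, and proving surjectivity of the map for $R_+$ suffices.

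I would work inside the symplectic vector space $(H_1(\partial M;\mathbb{Q}),\omega)$ of dimension four. Since $\gamma$ bounds each $R_\pm$ as a commutator, it is null-homologous in each, and Mayer--Vietoris on $\partial M=R_+\cup_\gamma R_-$ yields $H_1(\partial M;\mathbb{Q})=V_+\oplus V_-$, where $V_\pm:=\mathrm{image}\bigl(H_1(R_\pm;\mathbb{Q})\bigr)$. Each $V_\pm$ is symplectic (inheriting the non-degenerate intersection form on the once-punctured torus), and $V_+\perp V_-$ because representative curves on $R_\pm$ can be isotoped off of $\gamma$. The half-lives-half-dies principle identifies the kernel $K:=\ker\bigl(H_1(\partial M;\mathbb{Q})\to H_1(M;\mathbb{Q})\bigr)$ as a Lagrangian $2$-plane. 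Surjectivity of $V_+\to H_1(M;\mathbb{Q})$ is equivalent to $V_+\cap K=0$. Since $V_+$ is symplectic, $V_+\cap K$ is isotropic, hence at most a line. A brief classification of Lagrangian $2$-planes in the symplectic orthogonal sum $V_+\oplus V_-$ then shows that if $V_+\cap K$ is a line, so is $V_-\cap K$, with $K=(V_+\cap K)\oplus(V_-\cap K)$: otherwise, projecting $K$ into whichever of $V_\pm$ it meets trivially, one finds that $V_+\cap K$ would have to be symplectically orthogonal to all of $V_+$ and hence zero. This ``bad'' scenario thus produces essential simple closed curves $c_\pm\subset R_\pm$, primitive representatives of the respective lines, that are null-homologous in $M$.

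The remaining step, which I expect to be the main obstacle, is to rule out this split configuration. The desired conclusion is that $c_+$ bounds an embedded disk in $M$; such a disk would be a compressing disk for the essential curve $c_+\subset R_+$, contradicting the incompressibility implied by tautness. The difficulty is that in a handlebody ``null-homologous'' is strictly weaker than ``null-homotopic'': $c_+$ bounds some embedded orientable surface $\Sigma\subset M$ but possibly of positive genus, and the straightforward cut-and-paste swap---replacing a neighborhood of $c_+$ in $R_+$ with two parallel copies of $\Sigma$---produces a surface representing $[R_+]\in H_2(M,\gamma;\mathbb{Q})$ of Thurston norm $1-2\chi(\Sigma)$, which beats $-\chi(R_+)=1$ only when $\chi(\Sigma)>0$. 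The plan to close this gap is to exploit the simultaneous presence of $c_+$ and $c_-$ together with the single-suture hypothesis: by analyzing how norm-minimizing surfaces $\Sigma_\pm$ bounding $c_\pm$ interact with an explicit meridian system $\{D_1,D_2\}$ for $M$ (whose boundaries give a basis of $K$), and using that $\gamma$ must be disk-busting for both $R_\pm$, one should be able to extract a compressing disk for $R_+$ or $R_-$.
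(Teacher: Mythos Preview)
Your reduction is correct and in fact matches the paper's opening moves: both arguments arrive at the ``bad scenario'' in which the images of $H_1(R_\pm;\Q)$ in $H_1(M;\Q)$ are each rank~$1$, so that there exist essential simple closed curves $c_\pm\subset R_\pm$ that are null-homologous in $M$. The gap you flag is real, and your proposed plan to close it---extracting a compressing disk from norm-minimizing surfaces $\Sigma_\pm$ and a meridian system, using that $\gamma$ is disk-busting---does not come with an actual mechanism. In a genus-two handlebody, a simple closed curve in the commutator subgroup can sit arbitrarily deep in the lower central series of $\pi_1(M)\cong F_2$, and nothing in intersection numbers with meridian disks or in the Thurston norm of a bounding surface detects this; I do not see how your outline produces a disk rather than a high-genus $\Sigma_+$.

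The paper resolves this step by an entirely different, algebraic route. It does \emph{not} try to show $c_+$ bounds a disk. Instead it exploits the double commutator expression of the suture: choose generators so that $\gamma=[a,b]=[c,d]$ with $a\in\pi_1(R_+)$, $c\in\pi_1(R_-)$ having linearly independent images in $H_1(M;\Q)$ (this is forced by the rank-$1$ images). Using Hall's basic-commutator calculus (developed in Section~\ref{s:bc}), one shows that for any $\gamma\in G\setminus G_1$ the map $[\gamma,\cdot\,]\colon G_{k-1}\to G_k/G_{k+1}$ is a homomorphism whose image depends only on the class of $\gamma$ in $G/G_1$, and that for $\gamma\not\equiv\delta\pmod{G_1}$ these images intersect trivially (Lemmas~\ref{Lem:comm} and~\ref{Lem:comm2}). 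Bootstrapping, $\gamma=[a,b]=[c,d]$ is forced into $(\pi_1 M)_k$ for every $k$, and residual nilpotence of the free group gives $\gamma=1$ in $\pi_1(M)$. That makes $\pi_1(R_+)\to\pi_1(M)$ non-injective, contradicting incompressibility. This is the missing idea your outline needs; the contradiction is obtained at the suture $\gamma$, not at your curve $c_+$.
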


In this setting, working with homology with twisted coefficients is unnecessary. In contrast, without the assumption of a single suture, the above is no longer true.

\begin{theorem}\label{Thm:nonhom}
There exists a taut sutured genus-two handlebody which is not a rational homology product.
\end{theorem}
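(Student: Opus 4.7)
The plan is to exhibit an explicit taut sutured genus-two handlebody $(M,\gamma)$ for which the inclusion-induced map $H_1(R_\pm;\Q) \to H_1(M;\Q)$ fails to be an isomorphism. Since $M$ is a handlebody, $H_*(M;\Q)$ is concentrated in degrees $0$ and $1$ with $H_1(M;\Q) \cong \Q^2$, so the only place to fail is on $H_1$. I will choose $\gamma$ to consist of three sutures cutting $\partial M$ into two thrice-punctured spheres $R_\pm$, and arrange that the three suture classes in $H_1(M;\Q) = \Q^2$ span only a one-dimensional subspace. Since $H_1(R_\pm;\Q)$ is generated by the sutures, its image in $H_1(M;\Q)$ will then be at most one-dimensional, so the map cannot be surjective.

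To build the example, fix a free basis $x,y$ for $\pi_1(M) = F_2$ and seek three pairwise disjoint essential simple closed curves on $\partial M$ whose classes in $H_1(M;\Z) = \Z x \oplus \Z y$ are $x,\, x,\, -2x$; these sum to zero, as is required for them to cobound the pair of pants $R_+$. Two simple closed curves with the same nonzero homology class on a single torus must be parallel, so to keep the two ``$x$''-sutures non-parallel and disjoint one must route at least one of them through both handles of $\partial M$. Such a configuration is easy to give explicitly on the genus-two surface $\partial M$; three pairwise disjoint, pairwise non-isotopic essential simple closed curves on a closed surface of genus two automatically form a pants decomposition, so together they cut $\partial M$ into the desired two pairs of pants $R_\pm$.

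The main difficulty is verifying that $(M,\gamma)$ is taut. My approach will be Gabai's sutured hierarchy theorem: exhibit a sequence of product-disk (and if necessary product-annulus) decompositions reducing $(M,\gamma)$ to a disjoint union of $3$-balls with trivial sutures; since such decompositions preserve tautness in both directions and the terminal object is taut, tautness of $(M,\gamma)$ follows. An alternative is to compute the Thurston norm on $H^1(M;\Z) = \Hom(F_2,\Z)$ via the Alexander/McMullen norm for free groups and verify that $|\chi(R_\pm)| = 1$ realizes the norm of the class dual to $[R_\pm]$. The delicate point is that the rank-one homological constraint on the sutures is quite restrictive, and the curves must be chosen with care so as not to accidentally produce a compressing disk for $R_\pm$ or a lower-complexity representative of the Thurston-norm class; this is the main obstacle. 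Once tautness is confirmed, the conclusion is immediate: the image of $H_1(R_\pm;\Q) \to H_1(M;\Q) = \Q^2$ lies in the one-dimensional subspace $\Q\cdot x$, hence is not surjective, so $(M,\gamma)$ is not a rational homology product.
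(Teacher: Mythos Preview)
Your overall strategy matches the paper's: exhibit a genus-two handlebody whose three sutures cut $\partial M$ into two pairs of pants $R_\pm$ such that the boundary curves of $R_+$ all land in a one-dimensional subspace of $H_1(M;\Q)\cong\Q^2$. The paper's explicit example sends the two free generators of $\pi_1(R_+)$ to $xy$ and $yx$ in $\pi_1(M)=\langle x,y\rangle$, so both abelianize to $x+y$; your proposed classes $x,\,x,\,-2x$ would work equally well once realized by disjoint simple closed curves. Your claim that three disjoint, pairwise non-isotopic essential simple closed curves on a closed genus-two surface form a pants decomposition is correct.

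Where your proposal diverges from the paper, and where it is incomplete, is the verification of tautness. You plan to run a sutured hierarchy of product disks/annuli down to trivially sutured balls, or alternatively to compute an Alexander/McMullen norm; you correctly flag this as ``the main obstacle'' but do not carry it out. The paper instead uses its own machinery: by Proposition~\ref{Prop:tautness}, $M$ is an $\alpha$-homology product for a one-dimensional (hence homologically self-dual) representation $\alpha$ exactly when the $2\times 2$ Fox-derivative determinant is nonzero. For the paper's example this determinant is $1-\alpha(xy)$, which is nonzero for a generic $\alpha:\pi_1(M)\to\GL_1(\C)$; Friedl--Kim then gives tautness immediately. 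This is considerably shorter than building a hierarchy, it requires no delicate geometric analysis of how the sutures sit on $\partial M$, and it simultaneously establishes the stronger statement that $(M,\gamma)$ is certified by a Zariski-open set of one-dimensional representations (which the paper later uses to motivate Conjecture~\ref{Conj:g2}). Your hierarchy approach would in principle succeed, but you have neither specified the curves nor produced the decomposing disks, so as written the proposal has a genuine gap at exactly the step you identify as hardest.
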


The example in Theorem~\ref{Thm:nonhom} is an $\alpha$-homology product for a non-empty Zariski open set of choices $\alpha: \pi_1(M)\to \GL_1(\C)$. We conjecture this is the case in general for genus-two handlebodies.

\begin{conjecture}\label{Conj:g2}
Let $M$ be a taut sutured genus-two handlebody. The representations $\alpha: \pi_1(M)\to\GL_1(\C)$ which certify $M$ as an $\alpha$-homology product form a non-empty, Zariski open subset of the one-dimensional representation variety $\Hom(\pi_1(M), \GL_1(\C))$.
\end{conjecture}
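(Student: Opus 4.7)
The plan is to separate the statement into two tasks: Zariski openness of the set of certifying representations, and its non-emptiness. First I would fix a CW structure on $M$ in which $R_+$ and $R_-$ appear as subcomplexes (for instance, starting from a cell decomposition of $\partial M$ compatible with the sutures, and thickening appropriately). Since $\pi_1(M) = F_2$, any one-dimensional representation $\alpha$ factors through $H_1(M;\Z) = \Z^2$ and is parametrized by a pair $(s,t) \in (\C^*)^2$, so $\Hom(\pi_1(M),\GL_1(\C)) \cong (\C^*)^2$. The twisted cellular chain complexes $C_*(M, R_\pm; E_\alpha)$ then have differentials given by matrices of Laurent polynomials in $s,t$, computable via Fox calculus. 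The condition $H_*(M, R_\pm; E_\alpha) = 0$ reduces to the non-vanishing of certain polynomial minors, and hence cuts out a Zariski open subset of $(\C^*)^2$.

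The substantive content is the non-emptiness. The set of certifying $\alpha$ is the complement of an algebraic hypersurface, and one needs to rule out this hypersurface being all of $(\C^*)^2$. Equivalently, a specific Laurent polynomial $\Delta(s,t)$ --- essentially the twisted Alexander polynomial of the pair $(M,R_\pm)$ --- must not be identically zero. The whole force of the conjecture lies in the claim that tautness of $(M,\gamma)$ alone is strong enough to guarantee this.

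The main obstacle is therefore producing any $1$-dimensional certifying representation from the tautness hypothesis. One natural route is through Thurston norm comparisons in the style of McMullen: tautness is equivalent to $R_\pm$ minimizing the Thurston norm in their relative homology class, and one would hope this forces the associated Alexander invariant to detect the norm and so be non-zero. The classical McMullen inequality, however, only gives a lower bound on the Thurston norm in terms of the Alexander norm, so the reverse implication needed here requires genuinely new input, presumably exploiting the handlebody structure together with $\chi(R_\pm) = -1$. A complementary approach is to start from the higher-dimensional certifying representation guaranteed by the Friedl--Kim theorem and attempt to descend to a one-dimensional one --- for example, by passing to its determinant character or to an irreducible subquotient --- thereby reducing the conjecture to a question about how twisted homology products behave under such operations. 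Neither approach, however, gives an obvious way to preserve the certifying property, and this is where I expect the hardest work to lie.
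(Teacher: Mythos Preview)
The statement you are attempting to prove is labeled in the paper as a \emph{conjecture}, not a theorem: the paper does not contain a proof of it, and explicitly presents it as open. So there is no ``paper's own proof'' against which to compare your attempt.

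That said, your proposal is not a proof either. You correctly observe that the Zariski openness is routine --- and indeed this is exactly the content of Proposition~\ref{Prop:tautness} together with the Remark following it: for any sutured handlebody the certifying one-dimensional representations are cut out by the non-vanishing of a single Laurent polynomial determinant in the Fox derivatives, hence form a Zariski open set. The paper already records this. The entire content of the conjecture is the non-emptiness, and here you yourself acknowledge that neither of your two suggested approaches (a reverse McMullen-type inequality, or descending from a Friedl--Kim certifying representation to its determinant) comes with any mechanism for actually establishing the claim. Saying ``this is where I expect the hardest work to lie'' is an honest assessment, but it means what you have written is a discussion of possible strategies, not a proof proposal in any meaningful sense.

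In short: the gap is not a subtle technical lacuna but the absence of the main argument. The paper proves the conjecture only in the special case of a single connected suture (Theorem~\ref{Thm:ss}), by a delicate commutator-calculus argument specific to that situation; nothing in your outline suggests how to go beyond that case, and the paper does not claim to either.
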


This conjecture reflects the constricted nature of the genus-two setting. In contrast, as soon as we consider higher genus handlebodies, we can construct taut examples which require a two-dimensional representation.

\begin{theorem}\label{Thm:hg}
For all $g\ge 3$, there are taut sutured handlebodies $M_g$ of genus $g$ which fail to be a twisted homology product for any one-dimensional representation.
\end{theorem}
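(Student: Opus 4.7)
The plan is to construct $M_g$ explicitly and to identify the obstruction to rank-one certification directly via Fox calculus. Since a handlebody $M_g$ has $\pi_1(M_g)=F_g$ and (incompressible) $R_\pm$ have free fundamental groups of rank $g$ by Euler-characteristic matching, the rank-one character variety is the irreducible torus $(\C^*)^g$ with coordinates $a_j=\alpha(x_j)$. Both $M_g$ and $R_\pm$ are aspherical with twisted chain complexes in degrees $0$ and $1$, and the chain map induced by $\iota_\pm\colon y_i\mapsto w_i^\pm(x_1,\dots,x_g)$ is given on $C_1$ by the Fox Jacobian $A_\pm(\alpha)=(\alpha(\partial w_i^\pm/\partial x_j))$. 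Fox's fundamental formula gives the matrix identity $d_M\cdot A_\pm^{T}=d_{R_\pm}$, forcing $\ker A_\pm^{T}\subseteq\ker d_{R_\pm}$; together with a dimension count this shows the induced map on twisted $H_1$ is an isomorphism iff $\det A_\pm(\alpha)\neq 0$, with the $H_0$ condition absorbed. Thus the non-certifying locus is the Laurent-polynomial vanishing set $\{\det A_+\cdot\det A_-=0\}\subset(\C^*)^g$, and for it to fill the whole irreducible torus, one of $\det A_\pm$ must be identically zero in $\Z[a_1^{\pm 1},\dots,a_g^{\pm 1}]$.

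I would first construct a taut genus-three example $M_3$ with $\det A_+\equiv 0$, then stabilize: for $g>3$, take the boundary connect sum $M_g=M_3\natural H_{g-3}$ with a product (hence taut) sutured handlebody of genus $g-3$. The induced free-product decomposition $\pi_1(M_g)=F_3*F_{g-3}$ makes the Fox Jacobian of $M_g$ block-diagonal with $A_+(M_3)$ in one block, so $\det A_+(M_g)\equiv 0$ as well, and tautness is preserved under boundary summing of taut pieces.

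The genus-three construction is the heart of the argument. I would specify suture data on $\partial M_3$ so that the inclusion words $w_i^+$ realize a universal linear dependence among their Fox derivatives over $\Q(a_1,a_2,a_3)$. This can be arranged, for example, by allowing $R_+$ to be disconnected with components whose inclusions into $F_3$ share a structural relation (disconnectedness enlarges the chain complex and gives extra flexibility), or by engineering the $w_i^+$ as structured conjugates within a carefully chosen proper rank-three subgroup whose Alexander module supports a deeper-than-abelian relation. The principal obstacle is then to verify tautness simultaneously with incompressibility of $R_\pm$: we need $R_\pm$ to realize the Thurston norm in their common homology class. I would establish tautness either by producing an explicit sutured manifold hierarchy \`a la Gabai, or indirectly by exhibiting a \emph{higher}-dimensional certifying representation (guaranteed by Friedl--Kim) whose twisted Alexander polynomial saturates the Euler-characteristic lower bound on the norm, thereby certifying minimality without needing a hierarchy. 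Once such an $M_3$ is in hand, the identity $\det A_+\equiv 0$ reduces to a direct Fox-calculus computation.
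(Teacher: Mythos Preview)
Your framework is sound and matches the paper's: the Fox--Jacobian criterion, the observation that the non-certifying locus is Zariski closed in $(\C^*)^g$ so one of $\det A_\pm$ must vanish identically, the verification of tautness by exhibiting a higher-dimensional certifying representation, and the stabilization to $g>3$ by attaching product pieces (the paper uses sutured one-handles, essentially your boundary connect sum). So the scaffolding is correct.

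The genuine gap is that you have not actually constructed $M_3$. You identify this as ``the heart of the argument'' and then offer only heuristics---``allow $R_+$ to be disconnected,'' or ``engineer the $w_i^+$ as structured conjugates within a carefully chosen proper rank-three subgroup''---without specifying suture data, without writing down the words $w_i^+$, and without checking that the resulting $R_\pm$ are incompressible. The disconnected-$R_+$ suggestion in particular is a red herring: the paper's example has $R_+$ connected (a $\Sigma_{1,2}$), and disconnectedness does not obviously help force $\det A_+\equiv 0$ while keeping $R_+$ incompressible.

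What you are missing is the structural mechanism that makes the construction both findable and verifiable. The paper's key lemma is: if some generator $a$ of $\pi_1(R_+)$ has image in the \emph{second derived subgroup} $\pi_1(M)^{(2)}$, then $\alpha(\partial_{x_i} i_*(a))=0$ for every one-dimensional $\alpha$, so an entire row of $A_+$ vanishes identically. This reduces the problem to drawing a simple closed curve $a$ on $\partial M_3$ whose image in $F_3=\langle x,y,z\rangle$ is a commutator of commutators (concretely $a=[[x,y][x^{-1},y],\,z[y^{-1},x][y,x]z^{-1}]$), then completing $a$ to a spine $\{a,b,c\}$ of $R_+$ with $b,c$ chosen so that an explicit $\beta:\pi_1(M)\to U(2)$ gives nonzero determinant. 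Without this derived-series idea (or an equivalent concrete mechanism), your proposal remains a plan rather than a proof.
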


To construct these examples, we describe a condition on how $\pi_1(R_\pm)$ sit inside $\pi_1(M)$ which prevents $M$ from being a one-dimensional twisted homology product.

Finally, we generalize this to provide obstructions for admitting solvable representations of arbitrarily large derived length. In particular, we are able to prove the following strong negation of Agol and Dunfield's conjecture in the restricted setting of solvable representations.

\begin{theorem}\label{Thm:dim}
There exist taut manifolds $M_k$ such that $M_k$ is not a twisted homology product for any solvable representation $\alpha:\pi_1(M_k)\to \tn{GL}_{\varphi(k)}(\C)$, where $\varphi(k)\to\infty$ with $k$.
\end{theorem}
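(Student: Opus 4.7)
The strategy combines a topological iteration with a classical algebraic bound. By a theorem of Mal'cev, every solvable subgroup of $\GL_n(\C)$ has derived length at most $\mu(n) = O(\log n)$. Thus it suffices to produce, for each $d$, a taut sutured handlebody $N_d$ which fails to be a twisted homology product for every representation whose image is solvable of derived length at most $d$; one then sets $M_k := N_{d(k)}$ with $\varphi(k)$ chosen so that $\mu(\varphi(k)) \le d(k)$, and $d(k) \to \infty$ as slowly as needed.

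First I would reformulate the obstruction behind Theorem~\ref{Thm:hg} so that it applies not just to abelian $\alpha$ but to $\alpha$ of arbitrary solvable derived length. A solvable representation of derived length $\le d$ factors through the quotient $\pi_1(M)/\pi_1(M)^{(d)}$, so the twisted chain complex $C_*(M;E_\alpha)$ is computed from the $d$-th derived cover $\widetilde M^{(d)} \to M$; the inclusion-induced maps $H_*(R_\pm;E_\alpha) \to H_*(M;E_\alpha)$ are isomorphisms iff the associated relative complex is $E_\alpha$-acyclic. The goal of this step is a criterion, modeled on the $d=1$ case, that obstructs this acyclicity in terms of how $\pi_1(R_\pm)$ embeds into the successive derived quotients $\pi_1(M)/\pi_1(M)^{(j)}$.

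Next, I would build $N_d$ by induction on $d$, taking $N_1$ to be the manifold of Theorem~\ref{Thm:hg} (which obstructs abelian, i.e.\ derived-length-$1$, representations). To pass from $N_{d-1}$ to $N_d$, perform a controlled $1$-handle attachment together with a modification of the sutures, designed so that the newly added generators of $\pi_1(N_d)$ land inside $\pi_1(N_d)^{(d-1)}$, while preserving incompressibility of $R_\pm$ and Thurston-norm minimality. Any solvable $\alpha$ of derived length $\le d$ kills $\pi_1(N_d)^{(d)}$, hence its restriction to the subgroup coming from the old $N_{d-1}$ is solvable of derived length $\le d-1$, where the inductive obstruction applies.

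The principal obstacle, as I foresee it, has two faces. Algebraically, one must show that the obstruction genuinely sinks one level deeper in the derived series at each step rather than collapsing to a lower one. The natural language here is that of higher-order Alexander-type invariants, or of twisted Reidemeister torsion over the (skew) Ore localization of $\Z[\pi_1(M)/\pi_1(M)^{(d)}]$, and the heart of the work is exhibiting such an invariant as nonvanishing for the constructed $N_d$ while it would be forced to vanish if $M$ were an $\alpha$-homology product. Topologically, maintaining tautness under iterated handle attachment requires a careful sutured manifold decomposition argument tracking $R_\pm$. Together, these should reduce the level-$d$ obstruction for $N_d$ cleanly to the level-$(d-1)$ obstruction for $N_{d-1}$, driving the induction and completing the proof.
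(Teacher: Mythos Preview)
Your overarching plan---invoke Zassenhaus/Mal'cev to bound derived length in terms of $n$, then inductively construct handlebodies pushing the obstruction one level deeper in the derived series at each stage---matches the paper's. But the execution you sketch has two genuine gaps, and the paper fills them differently and more simply than you anticipate.

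The obstruction mechanism the paper uses is elementary: a two-line Fox-calculus lemma shows that if $\alpha$ has solvable image of degree $K$ then $\alpha(\partial g)=0$ for every $g\in G^{(K+1)}$. Hence, once $R_+$ carries a simple closed curve whose image lies in $\pi_1(M)^{(K)}$, an entire row of the Fox matrix in Proposition~\ref{Prop:tautness} vanishes for every solvable $\alpha$ of degree $<K$, and the determinant is zero outright. No Ore localization or higher-order torsion is needed, and there is no reduction to the previous stage---which matters, because your proposed reduction (``the restriction to the old $N_{d-1}$ is solvable of derived length $\le d-1$'') is false in general: restricting a representation to a subgroup does not lower the derived length of its image. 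As for the inductive construction, ``newly added generators of $\pi_1(N_d)$ land in $\pi_1(N_d)^{(d-1)}$'' cannot hold as stated, since a $1$-handle contributes a \emph{free} generator, which never lies in a nontrivial derived subgroup. The paper instead takes \emph{two} copies of $M_{K-1}$, joins them by a handle, and builds on the boundary a simple closed curve $a=[a_1,a_2]$ from the two copies of the deep curve; since each $a_i\in\pi_1^{(K-1)}$, one gets $a\in\pi_1^{(K)}$ automatically. A second handle and rearranged sutures then make $a$ a generator of $\pi_1(R_+)$, and tautness is verified by an explicit sutured decomposition back to two taut genus-$g$ pieces essentially equal to the input $M_{K-1}$.
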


The paper is organized as follows. In Section~\ref{s:defs}, we briefly review the theory of taut sutured manifolds and taut sutured manifolds. In Section~\ref{s:bc}, we review the basic of commutator calculus, which we use in Section~\ref{s:ss} to prove Theorem~\ref{Thm:ss}. In Section~\ref{s:ss} we also introduce the example illustrating Theorem~\ref{Thm:nonhom}, which we address again in Section~\ref{s:1d}. Section~\ref{s:1d} addresses the situation of one-dimensional representations and describes specific conditions for being a one-dimensional twisted homology product. We use these conditions in Section~\ref{s:hg} to prove Theorem~\ref{Thm:hg}. Finally, in Section~\ref{s:solv} we generalize the results of Sections~\ref{s:1d} and \ref{s:hg} to prove Theorem~\ref{Thm:dim}.

\subsection*{Acknowledgements}
The author would like to thank Danny Calegari for his continuing support and guidance throughout this work. The author also thanks Nick Salter for a multitude of helpful conversations and numerous comments on drafts of this paper.

\section{Basic definitions and facts}\label{s:defs}

\subsection{Sutured manifolds}

\begin{definition}
A \emph{sutured manifold} is a four-tuple $(M,R_\pm,\gamma)$ consisting of a compact $3$-manifold $M$ and a collection of pairwise disjoint, embedded curves $\gamma\subset \partial M$, which partition $\partial M -\gamma$ into oriented subsurfaces $R_+$ and $R_-$, such that the orientations induced on their common boundary $\gamma$ agree.
\end{definition}

Though this definition does not require it, we will always assume $M$ is connected. Some sources define the sutures to be a collection of annuli; our definition as a collection of curves is equivalent, though we occasionally view the sutures as annuli when convenient for notational or conceptual purposes.

\begin{example}
\ \\ \vspace{-10pt}
\begin{enumerate}
\item Given any compact surface $S$, the manifold $M=S\times I$ can be given a natural sutured structure, where $\gamma = \partial S\times I$, $R_+=S\times 1$ and $R_-=S\times 0$.
\item Any Seifert surface $S$ associated to a knot $K$, or more generally a link $L$, defines a sutured manifold $S^3-N(S)$, with $\gamma=K$ (or $L$) and $R_\pm\cong S$. The knot (or link) is fibered by $S$ exactly when this sutured manifold is a product.
\end{enumerate}
\end{example}

We are particularly interested in \emph{taut} sutured manifolds, which we define below. We recall first the \emph{Thurston norm} on $H_2(M,\partial M)$. Given a connected embedded surface $(S,\partial S)\subseteq (M,\partial M)$, we define $\chi_-(S)=\max\{0, -\chi(S)\}$. For $S$ not connected, $\chi_-(S)=\sum_{T\subseteq S} \chi_-(T)$, taken over connected components of $S$. Finally, the Thurston norm of $\sigma\in H_2(M,\partial M)$ is defined as
\[
\|\sigma\|=\min_{[S]=\sigma}\chi_-(S).
\]

\begin{definition}
A sutured manifold $M$ is \emph{taut} if it is irreducible and $R_\pm$ are \emph{taut}, that is, they are incompressible and realize the Thurston norm of their homology class. It is \emph{balanced} if $M$ is irreducible and $\chi(R_+)=\chi(R_-)$, and moreover $M$ is not a solid torus without sutures, and if any component of $R_\pm$ has positive Euler characteristic, then $M$ is $D^3$ with a single suture.
\end{definition}

Notice that a taut sutured manifold is necessarily balanced.

\subsection{Twisted homology products}

Associated to any representation $\alpha:\pi_1(M)\to \GL(V)$ of the fundamental group of a sutured manifold $M$ are homology groups $H_*(M;E_\alpha)$ and cohomology groups $H^*(M;E_\alpha)$ with coefficients twisted by the representation $\alpha$. Any such representation restricts to representations $i^*_\pm\alpha:\pi_1(R_\pm)\to \GL(V)$, in turn giving natural maps
\[
H_*(R_\pm;E_\alpha)\xrightarrow{i_*^\pm} H_*(M;E_\alpha),
\]
and similarly on cohomology.

Recall that a sutured manifold $M$ is an {\em$\alpha$-homology product} for a representation $\alpha:\pi_1(M)\to \GL(V)$ if the maps $i^\pm_*$ are all isomorphisms. Equivalently, we require
\[
H_*(M,R_\pm; E_\alpha)=0.
\]

Our interest in twisted homology products is motivated by the following theorem of Friedl and Kim (\cite{FK13}).

%
%

\begin{theorem}[Friedl-Kim]
Let $M$ be a balanced sutured manifold. Then $M$ is taut if and only if $M$ is an $\alpha$-homology product for some $\alpha: \pi_1(M)\to \tn{GL}_n(\C)$.
\end{theorem}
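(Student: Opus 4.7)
The plan is to prove the two implications separately, since each direction uses different machinery.

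For the forward direction (twisted homology product implies taut), I would use the well-developed connection between twisted Reidemeister torsion and the Thurston norm, after McMullen and Friedl--Vidussi. Concretely, if $H_*(M, R_\pm; E_\alpha) = 0$ then the twisted chain complex of the pair $(M, R_\pm)$ is acyclic, so its Reidemeister torsion is defined and the associated twisted Alexander polynomial has degree exactly $\chi_-(R_\pm)\dim V$. Standard bounds then force $R_\pm$ to realize the Thurston norm of their common homology class. Incompressibility is handled separately: a compressing disk would yield a representative with strictly smaller $\chi_-$, contradicting the norm-minimizing equality. Irreducibility of $M$ follows from the vanishing of the twisted homology in degree $0$ being forced only in the irreducible case, together with the hypothesis that $M$ is balanced.

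For the reverse direction (taut implies twisted homology product), I would argue in four steps.

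\textbf{Step 1.} Apply Agol's Virtual Fibering Theorem in the sutured setting. One natural route is to form the double $DM$ of $M$ along its sutures, which is a closed hyperbolic $3$-manifold under mild hypotheses guaranteed by tautness. Agol produces a finite fibered cover of $DM$, and one chooses this cover so that the fibering is transverse to the doubling locus. Pulling back gives a finite regular cover $p:\tilde M\to M$ with deck group $G$ in which the pulled-back sutured structure is (homotopy equivalent to) a product sutured manifold $F\times I$.

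\textbf{Step 2.} In such a cover, the inclusions $\tilde R_\pm \hookrightarrow \tilde M$ are homotopy equivalences, so $H_*(\tilde M, \tilde R_\pm; \C)=0$.

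\textbf{Step 3.} Let $\alpha:\pi_1(M)\to G \hookrightarrow \GL(\C[G])$ be the composition of the quotient $\pi_1(M)\to G$ with the regular representation. By Shapiro's lemma,
\[
H_*(M;E_\alpha) \cong H_*(\tilde M;\C), \qquad H_*(R_\pm; E_{\alpha|_{\pi_1(R_\pm)}}) \cong H_*(\tilde R_\pm;\C),
\]
and these isomorphisms are compatible with the maps induced by inclusion. Combined with Step~2, this shows $H_*(M, R_\pm; E_\alpha)=0$, so $M$ is an $\alpha$-homology product with $n=|G|$.

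The main obstacle is Step~1: ensuring that Agol's theorem, which is stated for closed hyperbolic $3$-manifolds, actually furnishes a virtual cover whose product structure is compatible with the sutures. One must check that the doubled manifold $DM$ is hyperbolic (or at least irreducible and atoroidal enough to be handled by Agol/Przytycki--Wise), that the fiber can be chosen transverse to the doubling locus $\gamma$, and that the resulting cover of $M$ splits as a product of $F$ with an interval in a way that respects $R_\pm$. This compatibility issue is the essential technical heart of the Friedl--Kim argument; once it is in hand, Steps~2--4 are formal.
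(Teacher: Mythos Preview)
The paper does not prove this theorem; it is quoted from Friedl--Kim and used as a black box. The only information the paper supplies about the argument is that the certifying representation may be taken unitary and that its construction ``uses in a key way Agol's virtual fibering.'' There is therefore no paper-side proof to compare your proposal against.

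For what it is worth, the overall shape of your reverse direction---pass to a finite cover which is a product sutured manifold, then push down via the regular representation and Shapiro's lemma---is indeed how Friedl--Kim argue. Your Step~1, however, is not the route they take: they do not form the double $DM$ along $\gamma$ and then try to arrange a virtual fiber transverse to the doubling locus. Agol's theorem gives no control over such transversality, and you correctly flag this as unresolved; but it is not merely a technical wrinkle, it is a genuine gap in your outline. Friedl--Kim instead appeal to the sutured version of virtual fibering (a taut sutured manifold with virtually RFRS fundamental group has a finite cover which is a product), which yields the product cover directly without any doubling or transversality argument.

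Two smaller remarks. First, in this paper the definition of \emph{balanced} already requires $M$ to be irreducible, so your side comment about deducing irreducibility in the forward direction is unnecessary; only incompressibility and norm-minimality of $R_\pm$ need to be established. Second, your proposed mechanism for irreducibility (``vanishing of twisted homology in degree~$0$ being forced only in the irreducible case'') is not correct in any event: $H_0(M,R_\pm;E_\alpha)=0$ holds simply because $R_\pm$ is nonempty, irrespective of reducibility.
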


In particular, the representation $\alpha$ may always be taken to be a unitary representation. This proves any taut sutured manifold can be realized as a twisted homology product, giving a novel method for verifying tautness of sutured manifolds. However, their construction of the certifying representation uses in a key way Agol's virtual fibering (\cite{Agol}). 

We will often be interested in representations which satisfy a homological generalization of the condition that $E_\alpha$ and $E_\alpha^*$, its dual, be isomorphic.

\begin{definition}
A representation $\alpha:\pi_1(M)\to \GL(V)$ is \emph{homologically self-dual} if, for any subspace $A\subseteq M$, there is an isomorphism $H_*(M, A; E_\alpha)\cong H^*(M, A; E_\alpha)$.
\end{definition}

For example, any unitary representation is homologically self-dual, as is any representation to $SL_2(K)$, for any field $K$. This condition is of particular use because it greatly simplifies verifying $M$ as a twisted homology product.

\begin{proposition}[Agol-Dunfield, Proposition 3.1]\label{Prop:AD3.1}
Suppose $M$ is a connected, balanced sutured manifold with $R_\pm$ nonempty. If $\alpha$ is homologically self-dual, then $M$ is an $\alpha$-homology product if and only if any one of the following vanish:
\[
H_k(M, R_\pm; E_\alpha)\tn{, }H^k(M,R_\pm; E_\alpha) \tn{ for }k=1,2.
\]
\end{proposition}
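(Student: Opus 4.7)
The plan is to leverage three standard tools—twisted Poincaré-Lefschetz duality, the hypothesis of homological self-duality, and an Euler characteristic count—to show that among the eight groups $H_k(M,R_\pm; E_\alpha)$ and $H^k(M, R_\pm; E_\alpha)$ for $k = 1,2$, the vanishing of any single one forces all the others to vanish as well. Combined with the automatic vanishing of $H_0$ and $H_3$, this is exactly the condition that $M$ be an $\alpha$-homology product.

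First I would dispose of the boundary cases $k = 0, 3$. Because $M$ is connected and $R_\pm$ is nonempty, any element of $H_0(M; E_\alpha) \cong V_{\pi_1(M)}$ lies in the image of $H_0(R_\pm; E_\alpha)$ (pick a basepoint on a component of $R_\pm$), so the long exact sequence of the pair forces $H_0(M, R_\pm; E_\alpha) = 0$. Poincaré-Lefschetz duality combined with self-duality then gives $H_3(M, R_\pm; E_\alpha) \cong H^0(M, R_\mp; E_\alpha) \cong H_0(M, R_\mp; E_\alpha) = 0$, and similarly for cohomology in dimensions $0$ and $3$.

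Next, twisted Poincaré-Lefschetz duality applied to the compact oriented $3$-manifold $M$ whose boundary decomposes along the sutures $\gamma$ into codimension-$0$ pieces $R_+$ and $R_-$ yields $H_k(M, R_+; E_\alpha) \cong H^{3-k}(M, R_-; E_\alpha)$, and symmetrically after swapping $R_+ \leftrightarrow R_-$. Self-duality converts the right-hand side to $H_{3-k}(M, R_-; E_\alpha)$, so that $H_1(M, R_+; E_\alpha)$ and $H_2(M, R_-; E_\alpha)$ vanish together, and likewise $H_2(M, R_+; E_\alpha)$ with $H_1(M, R_-; E_\alpha)$. To close the loop on a single side I would use an Euler characteristic count: for a finite-dimensional $\alpha$, $\chi_\alpha(M, R_\pm) = \dim V \cdot \chi(M, R_\pm)$, and the balanced hypothesis together with $\chi(\partial M) = 2\chi(M)$ and $\chi(\gamma) = 0$ forces $\chi(M) = \chi(R_\pm)$, hence $\chi_\alpha(M, R_\pm) = 0$. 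Combined with the vanishing of $H_0$ and $H_3$, this yields $\dim H_1(M, R_\pm; E_\alpha) = \dim H_2(M, R_\pm; E_\alpha)$. Chaining these equivalences propagates the vanishing of any single one of the eight groups to all the others.

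The main obstacle is bookkeeping rather than insight: one must set up twisted Poincaré-Lefschetz duality carefully when $\partial M$ is decomposed along the sutures $\gamma$ (choosing the right thickening so that $R_+$ and its closed complement in $\partial M$ form a genuine dual pair), and verify that the self-duality isomorphism interacts with Poincaré-Lefschetz duality functorially enough that the chain of equivalences actually closes without basepoint or sign issues.
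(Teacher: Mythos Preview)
Your proposal is correct and follows essentially the same route as the paper: vanishing of $H_0$ from nonemptiness of $R_\pm$, Poincar\'e--Lefschetz duality to pass between $(M,R_+)$ and $(M,R_-)$, the Euler characteristic identity $\chi(M)=\chi(R_\pm)$ from the balanced hypothesis to link $H_1$ and $H_2$ on the same side, and homological self-duality only at the very end to close the loop. The paper simply treats one of the eight cases explicitly and declares the rest similar, whereas you spell out the full chain of equivalences, but the ingredients and their order are the same.
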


We give their proof to highlight a couple of facts which do not need the assumption of homological self-duality.

\begin{proof}
As $R_\pm$ are nonempty, we know $H_0(M,R_\pm;E_\alpha)=H^0(M,R_\pm;E_\alpha)=0$. By Poincar\'e duality, also $H_3(M,R_\mp;E_\alpha)=0$. Now suppose $H_1(M,R_-;E_\alpha)=0$; the other cases are similar. Since $M$ is balanced, we have $\chi(R_\pm)=\chi(M)$, so $\chi(H_*(M,R_-;E_\alpha))=0$. Then, since $H_k(M,R_-;E_\alpha)=0$ for $k\ne2$, we also have $H_2(M,R_-;E_\alpha))=0$. Poincar\'e duality now shows $H^*(M,R_+;E_\alpha)=0$. Finally, as $\alpha$ is homologically self-dual, this gives $H_*(M,R_+;E_\alpha)=H^*(M,R_+;E_\alpha)=0$. 
\end{proof}

We do not use self-duality until the last step. More generally, we can say

\begin{corollary}
For $R=R_\pm$,
\[
H_1(M, R; E_\alpha)=0 \quad \iff \quad H_2(M, R; E_\alpha)=0,
\]
and 
\[
H^1(M, R; E_\alpha)=0 \quad \iff \quad H^2(M, R; E_\alpha)=0.
\]
\end{corollary}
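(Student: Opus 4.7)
The plan is to observe that the corollary is already essentially contained in the first portion of the proof of Proposition~\ref{Prop:AD3.1}, prior to the invocation of homological self-duality. The content is a direct twisted Euler characteristic calculation, so I would simply extract and package the relevant steps.

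First I would establish vanishing of the relative (co)homology in degrees $0$ and $3$. Since $M$ is connected and $R = R_\pm$ is nonempty, the inclusion induces a surjection on $\pi_0$ and hence $H_0(M, R; E_\alpha) = 0$ and $H^0(M, R; E_\alpha) = 0$. Twisted Poincar\'e--Lefschetz duality for the compact $3$-manifold with boundary then gives
\[
H_3(M, R_\pm; E_\alpha) \cong H^0(M, R_\mp; E_\alpha) = 0, \quad H^3(M, R_\pm; E_\alpha) \cong H_0(M, R_\mp; E_\alpha) = 0,
\]
using that $R_\mp$ is also nonempty ($M$ being balanced).

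Next I would run the Euler characteristic argument. Writing $\partial M = R_+ \cup R_- \cup \gamma$ with $\gamma$ a disjoint union of annuli, we have $\chi(R_+) + \chi(R_-) = \chi(\partial M) = 2\chi(M)$, so the balanced condition $\chi(R_+) = \chi(R_-)$ forces $\chi(R_\pm) = \chi(M)$. The long exact sequence of the pair together with multiplicativity of Euler characteristic for local systems of finite-dimensional $\C$-vector spaces yields
\[
\chi\bigl(H_*(M, R; E_\alpha)\bigr) = \dim(V)\cdot\bigl(\chi(M) - \chi(R)\bigr) = 0.
\]
Combined with the vanishing in degrees $0$ and $3$ established above, this forces $\dim H_1(M, R; E_\alpha) = \dim H_2(M, R; E_\alpha)$, so one vanishes if and only if the other does. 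The cohomological statement follows identically, using the vanishing of $H^0$ and $H^3$ and the identity $\chi(H^*) = \chi(H_*)$ for a finite chain complex over a field.

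I do not anticipate a genuine obstacle here; the only care required is to confirm that twisted Poincar\'e--Lefschetz duality and multiplicativity of $\chi$ under passage to a local system of finite-dimensional vector spaces apply in this setting, both of which are standard. The point is essentially that the self-duality hypothesis of Proposition~\ref{Prop:AD3.1} is used only in the very last sentence of its proof to pass between $H_*(M, R_+)$ and $H_*(M, R_-)$, whereas the within-pair equivalence of $H_1$ and $H_2$ vanishing comes for free from the Euler characteristic.
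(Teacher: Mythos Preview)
Your proposal is correct and mirrors the paper's own approach: the corollary is stated immediately after Proposition~\ref{Prop:AD3.1} precisely as the portion of that proof (vanishing in degrees $0$ and $3$, then the Euler characteristic identity from balancedness) that does not invoke self-duality. Your write-up is slightly more detailed, but the argument is the same.
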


\begin{corollary}
$M$ is an $\alpha$-homology product if and only if
\[
H_1(M, R_+; E_\alpha)=H_1(M, R_+; E_\alpha^*)=0.
\]
In particular, if $M$ is an $\alpha$-homology product, it is also an $\alpha^*$-homology product.
\end{corollary}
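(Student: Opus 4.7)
The plan is to isolate exactly where the proof of Proposition~\ref{Prop:AD3.1} uses homological self-duality and to substitute the universal coefficient theorem at precisely that step. Replacing self-duality with UCT has the effect of introducing the contragredient representation $\alpha^*$, which accounts for the shape of the criterion.

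First I would extract from the proof of Proposition~\ref{Prop:AD3.1} the chain of implications that does \emph{not} rely on self-duality: starting from $H_1(M, R_-; E_\alpha) = 0$, the Euler characteristic computation together with the first corollary and Poincar\'e--Lefschetz duality $H^k(M, R_\pm; E_\alpha) \cong H_{3-k}(M, R_\mp; E_\alpha)$ yield the biconditional
\[
H_1(M, R_-; E_\alpha) = 0 \iff H^k(M, R_+; E_\alpha) = 0 \text{ for all } k,
\]
and symmetrically with subscripts exchanged. So without self-duality the eight vanishing conditions of Proposition~\ref{Prop:AD3.1} break into two independent clusters, one attached to $R_+$ and one to $R_-$, and $M$ is an $\alpha$-homology product iff a representative from each cluster vanishes.

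The second step is to bridge the two clusters using universal coefficients. Over the field $\mathbb{C}$, working cellularly, the natural evaluation pairing identifies the cochain complex $\Hom_{\pi_1(M)}(C_*(\tilde M, \tilde R), V)$ with the $\mathbb{C}$-linear dual of $C_*(M, R; E_{\alpha^*})$, giving
\[
H^k(M, R; E_\alpha) \cong H_k(M, R; E_{\alpha^*})^{*}.
\]
Consequently $H^*(M, R_+; E_\alpha) = 0$ iff $H_*(M, R_+; E_{\alpha^*}) = 0$, which by the first corollary reduces to the single condition $H_1(M, R_+; E_{\alpha^*}) = 0$.

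Combining these two steps gives the stated criterion: $M$ is an $\alpha$-homology product iff $H_1(M, R_+; E_\alpha) = 0$ (killing the $R_+$ cluster) and $H_1(M, R_+; E_{\alpha^*}) = 0$ (killing the $R_-$ cluster via UCT). The ``in particular'' claim is immediate from the symmetry of this criterion under $\alpha \leftrightarrow \alpha^*$, using $\alpha^{**} = \alpha$ for finite-dimensional $V$. The only real point of care is in fixing left/right and contragredient conventions so the UCT really yields $E_{\alpha^*}$ on the homology side; beyond this bookkeeping there is no substantive obstacle, since the rest is a rearrangement of machinery already assembled in Proposition~\ref{Prop:AD3.1} and its first corollary.
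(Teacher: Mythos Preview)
Your proposal is correct and follows exactly the route the paper has in mind. The paper does not spell out a proof of this corollary; it simply remarks that self-duality is used only at the final step of the proof of Proposition~\ref{Prop:AD3.1}, and your argument is precisely the natural unpacking of that remark: isolate the Euler-characteristic and Poincar\'e--Lefschetz steps (which give the two ``clusters'' independent of self-duality), then replace the self-duality input $H^*(M,R_+;E_\alpha)\cong H_*(M,R_+;E_\alpha)$ by the universal-coefficient identification $H^*(M,R_+;E_\alpha)\cong H_*(M,R_+;E_{\alpha^*})^{*}$, which is what produces the $\alpha^*$ in the statement.
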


\subsection{Sutured manifold hierarchies}

To conclude this section, we discuss one method we might try to use for constructing representations, and why it fails. Recall the sutured manifold hierachy of a taut sutured manifold $M$ is a sequence of decompositions
\[
M = M_0 \xrightarrow{S_1} M_1 \xrightarrow{S_2} M_2 \xrightarrow{S_3} \cdots \xrightarrow{S_n} M_n
\]
such that each $S_k$ meets the sutures of $M_{k-1}$ transversally, each $M_k$ is taut, and every embedded surface in $M_n$ is separating. Gabai introduced this concept in \cite{gabai-hierarchies}, proving such hierarchies always exist, and moreover, that if a sequence of decompositions of an arbitrary sutured manifold $M$ satisfies certain additional conditions, tautness of $M_n$ implies $M$ is taut as well.

As these hierarchies are often used in inductive arguments, one might hope that such a hierarchy can be used to inductively construct certifying representations. More precisely, if $M \xrightarrow{S} N$ is a decomposition, then $N$ is a subspace of $M$, so a representation of $M$ restricts to a representation of $N$. Suppose $M$ and $N$ are both taut, and that $\alpha$ is certifying for $M$. One might na\"ively imagine that the restriction of $\alpha$ is certifying for $N$. This is not true, as the following example shows.

\begin{example}\label{ex:smh}
The handlebodies $M$ and $N$ in Figure~\ref{Fig:hierex} are related by a decomposition along a disk meeting the sutures in $M$ in four points. In this case, we may realize $\pi_1(M)$ as an HNN extension of $\pi_1(N)\cong F_2$, with $\pi_1(M)\cong F_3$ gaining a free generator $z$. The representation $\alpha:\pi_1(M)\to \tn{GL}(\C)$ defined by $\alpha(x)=-1$ and $\alpha(y)=\alpha(z)=1$ is certifying for $M$, as can be verified via Proposition~\ref{Prop:tautness}. However, when restricted to $N$, the representation $\alpha$ is no longer certifying: the locus of representations which fail to be certifying are those with $x\mapsto -1$.

\begin{figure}[h!]
  \includegraphics[width=\linewidth]{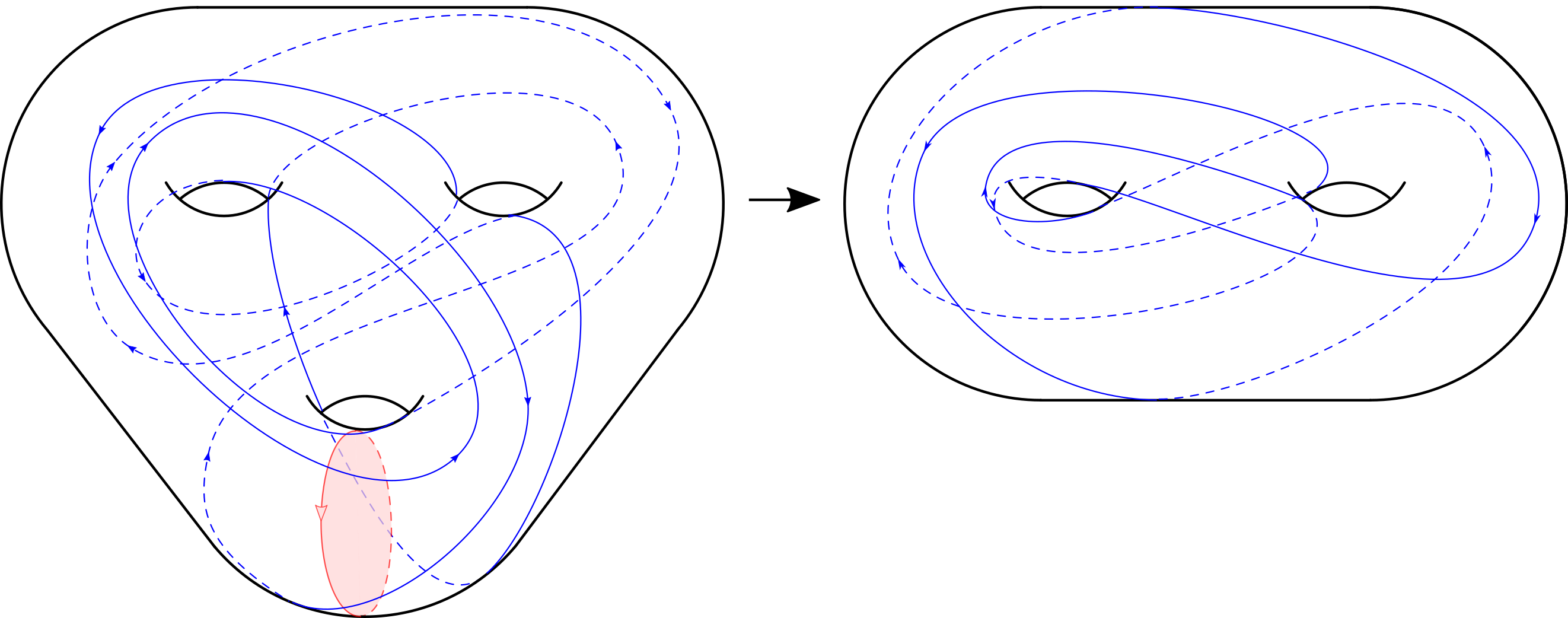}
  \caption{The decomposition of $M$ (left) along the disk $S$ to obtain $N$ (right).}
  \label{Fig:hierex}
\end{figure}
\end{example}
 
The reason for this is that there is part of the boundary of $N$ which is not contained in the boundary of $M$. Understanding when this naive guess fails requires analyzing how the suture structure changes with this new boundary, which is subtle in practice. However, this failure is isolated to the local situation of the decomposition. That is, if $S_\pm\subseteq N$ are the two copies of $S$ in the boundary of $N$, there is still an injection $H_*(R_\pm-S_\pm; E_{\alpha|_N})\into H_*(N; E_{\alpha|_N})$.


In this example, it is the case that both manifolds admit one-dimensional certifying representations. However, even the condition for admitting a one-dimensional certifying representation is subtle to understand in relation to a decomposition $M \xrightarrow{S} N$.

As we will see in Lemma~\ref{Lem:addhandle}, in the special case that the surface $S$ is a disk meeting the sutures of $M$ exactly twice, a certifying representation for $N$ can be extended to one which certifies $M$.


\section{The basics of basic commutators}\label{s:bc}

Before we prove Theorem~\ref{Thm:ss}, we first review the theory of {\em basic commutators}, due to Hall~(\cite{hall}).

Recall the {\em lower central series} $G_k$ of a group $G$ is defined as $G_0=G$ and $G_{k}=[G,G_{k-1}]$ for $k>0$. A group is nilpotent exactly when this series has finite length. No free group $F_n$ of rank $n\ge 2$ is nilpotent; however, it is well-known that $F_n$ is {\em residually nilpotent}, that is,
\[
\bigcap_{k=0}^\infty G_k=1.
\]

Now assume $G\cong F_n$ is free of rank $n$. Taking quotients of successive terms of the series gives a sequence of free abelian groups $G_{k-1}/G_k$. Hall defined basic commutators, and proved the basic commutators of weight $k$ form a generating set for the corresponding quotient. The basic commutators are defined inductively, with respect to a fixed, ordered generating set $\{x_1,\ldots,x_n\}$, equipped with an inductively defined weighting and ordering. Specifically:

\begin{enumerate}
\item The basic commutators of weight 1 are $x_1,\ldots,x_n$, and ordered by $x_i\le x_j$ iff $i\le j$;
\item The basic commutators of weight $k>1$ consist of words $[x,y]$, where $x,y\in G$ are of weights $i,j$ respectively, such that
	\begin{enumerate}
		\item[(i)] $k=i+j$;
		\item[(ii)] $x<y$ according to the ordering; and 
		\item[(iii)] if $y=[w,z]$, then $x\ge w$;
	\end{enumerate}
\item The basic commutators of weight $k$ are then given an (arbitrary) order, and set to be greater than all basic commutators of lesser weight.
\end{enumerate}

\begin{example}
If $G=F(x_1,x_2)$ is a free group on two generators, then the basic commutators of weight $k$ are shown below for small $k$. 
\begin{align*}
k=1: & \quad x_1, x_2 \\
2: & \quad [x_1, x_2] \\
3: & \quad [x_1,[x_1,x_2]], [x_2,[x_1,x_2]] \\
4: & \quad [x_1,[x_1,[x_1,x_2]]], [x_1,[x_2,[x_1,x_2]]], [x_2,[x_1,[x_1,x_2]]], [x_2,[x_2,[x_1,x_2]]]
\end{align*}
Note that $k=5$ gives the first example of a basic commutator $[x,y]$ where the weight of $x$ is not $1$: $[[x_1,x_2],[x_1,[x_1,x_2]]]$.
\end{example}

\begin{proposition}\label{Hall-coll}
Any element $g$ in a free group $G$ can be uniquely expressed in the form
\[
g = c_1 c_2 \cdots c_m,
\]
where each $c_i$ is a basic commutator or its inverse, $c_i \le c_j$ whenever $i < j$, and $c_i\ne c_{i+1}^{-1}$.
\end{proposition}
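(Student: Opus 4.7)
The plan is to establish existence by Hall's collection process and uniqueness via the classical structure of the lower central series quotients $G_{k-1}/G_k$ as free abelian groups on the basic commutators of weight $k$, a result of Magnus--Witt--Hall.

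For existence, starting from any word representing $g$ in the generators, I would repeatedly apply the elementary swap $ab = ba\,[b,a]$ at the leftmost adjacent pair $ab$ that is out of order with respect to the total order on basic commutators, cancelling adjacent inverse pairs throughout. Each swap introduces a commutator $[b,a]$ of strictly greater weight than both $a$ and $b$, which is then rewritten as a product of basic commutators of that weight (via antisymmetry and the Jacobi identity) and appended to the suffix. Termination is best argued in stages: working modulo $G_{c+1}$ for a fixed $c$, only basic commutators of weight $\le c$ survive, and in the nilpotent quotient $G/G_{c+1}$ the collection process visibly finishes in finitely many elementary moves. As $c$ grows, the normal forms obtained in consecutive quotients are consistent, because collecting weight-$(c+1)$ letters does not disturb the weight-$\le c$ prefix already in order. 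This yields a single finite collected word representing $g$.

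For uniqueness, suppose $g$ admits two collected expressions, and let $k$ be the smallest weight at which they differ. After projecting to $G_{k-1}/G_k$ and cancelling the matching letters of weight $<k$, each expression becomes a $\Z$-linear combination of weight-$k$ basic commutators. Since the weight-$k$ basic commutators form a free $\Z$-basis for $G_{k-1}/G_k$, the coefficients on each basic commutator must match, contradicting the assumed disagreement at weight $k$. Induction on $k$ completes the argument.

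The main obstacle is making termination of the collection process rigorous. Each swap spawns new, higher-weight commutators that themselves demand placement, so a naive complexity argument in the free group does not close. The two-stage approach above, which first works in the nilpotent quotients $G/G_{c+1}$ and then assembles the normal forms, sidesteps an unwieldy direct measure, but requires carefully verifying compatibility across successive quotients --- specifically, that passage from $G/G_{c+1}$ to $G/G_{c+2}$ only appends weight-$(c+1)$ letters without permuting the previously sorted prefix.
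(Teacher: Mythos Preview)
The paper does not supply a proof of this proposition: it is quoted as a classical result of Hall (the section opens by announcing a review of ``the theory of \emph{basic commutators}, due to Hall~\cite{hall}''), and the text passes directly from the statement to Lemma~\ref{Lem:comm}. There is therefore nothing in the paper to compare your argument against.

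That said, your outline is the standard one. The uniqueness half is fine: freeness of $G_{k-1}/G_k$ on the weight-$k$ basic commutators forces the weight-$k$ blocks of any two collected expressions for the same element to agree, and induction on $k$ finishes it. For existence, your two-stage plan---collect in each nilpotent quotient $G/G_{c+1}$, then observe that the resulting normal forms are compatible as $c$ grows---is also the classical route, and you correctly flag compatibility across successive $c$ as the point needing care. One step you gloss over, however, is why the compatible system of collected words modulo $G_{c+1}$ assembles into a \emph{finite} product in $G$ rather than merely a formal infinite product in the pro-nilpotent completion: compatibility alone does not give this. In the standard treatments this is handled either by bounding the weights that can appear when collecting a word of a given length, or (as in Hall and in Magnus--Karrass--Solitar) by stating the result only modulo $G_c$---which is in fact all the paper ever uses, since the proof of Lemma~\ref{Lem:comm} works entirely modulo $G_{k+1}$.
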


\begin{lemma}\label{Lem:comm}
For a fixed $\gamma\in G - G_1$, for all $k\ge 1$, commutation with $\gamma$ defines a homomorphism
\[
[\gamma,\cdot\,]: G_{k-1} \to G_k/G_{k+1}.
\]
Moreover, the image of $[\gamma,\cdot\,]$ consists of all $g\in G_k/G_{k+1}$ which can be expressed as $g=[\gamma,g']$ for some $g'\in G$.
\end{lemma}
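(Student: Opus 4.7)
For the homomorphism claim, my plan is to apply the identity $[\gamma, g_1 g_2] = [\gamma, g_1] \cdot g_1[\gamma, g_2]g_1^{-1}$. The conjugation error $g_1[\gamma, g_2]g_1^{-1} \cdot [\gamma, g_2]^{-1}$ is itself a commutator of $g_1 \in G_{k-1}$ with $[\gamma, g_2] \in G_k$, so it lies in $[G_{k-1}, G_k] \subseteq G_{2k} \subseteq G_{k+1}$ for $k \ge 1$. Combined with the fact that $G_k/G_{k+1}$ is abelian (since $[G_k, G_k] \subseteq G_{2k+1} \subseteq G_{k+1}$), this yields $[\gamma, g_1 g_2] \equiv [\gamma, g_1][\gamma, g_2] \pmod{G_{k+1}}$, and the map is $\mathbb Z$-linear on $G_{k-1}/G_k$.

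For the image characterization, one inclusion is tautological since $G_{k-1} \subseteq G$. The substantive direction asks: given $g' \in G$ with $[\gamma, g'] \in G_k$, produce $g'' \in G_{k-1}$ with $[\gamma, g''] \equiv [\gamma, g'] \pmod{G_{k+1}}$. My plan is to do this by inductively raising the filtration weight of $g'$. Using Proposition~\ref{Hall-coll}, I would express $g'$ as a product of basic commutators in increasing weight and absorb its low-weight contributions one layer at a time.

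Concretely, suppose $g' \in G_j$ with $1 \le j \le k-2$. The hypothesis $[\gamma, g'] \in G_k \subseteq G_{j+2}$ together with the homomorphism property at level $j+1$ forces the leading weight-$(j+1)$ class of $g'$ to lie in the kernel of the map $[\bar\gamma, \cdot]: G_j/G_{j+1} \to G_{j+1}/G_{j+2}$. This kernel is trivial for $j \ge 1$: the associated graded $\bigoplus G_{i-1}/G_i$ is the free Lie algebra on the generators of $G$, and the centralizer of a nonzero degree-one element in a free Lie algebra has no nontrivial elements in degree $\ge 2$. Hence $g' \in G_{j+1}$ automatically, with no modification needed. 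The base step $j = 0$ is more delicate: the vanishing of the weight-two commutator only forces $\bar g'$ to lie in the rational line spanned by $\bar\gamma$ in the abelianization, so $g' = \gamma^c h$ for some integer $c$ and some $h \in G_1$, and a direct computation $[\gamma, \gamma^c h] = [\gamma, \gamma^c h \gamma^{-c}]$ lets me replace $g'$ by $\gamma^c h \gamma^{-c} \in G_1$ without changing $[\gamma, g']$. Iterating the filtration-raising step through all levels yields $g'' \in G_{k-1}$.

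The main obstacle I anticipate is the $j = 0$ base case, which requires a direct rearrangement by conjugation rather than vanishing, along with care in confirming that the integer $c$ obtained from the rank-one intersection analysis realizes the required commutator identity; by contrast, the $j \ge 1$ steps follow uniformly once the centralizer fact in the free Lie algebra is in place.
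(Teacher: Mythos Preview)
Your homomorphism argument and the paper's are essentially the same commutator-identity computation.

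For the image claim your route is genuinely different. The paper expands $g'$ as an ordered product of Hall basic commutators via Proposition~\ref{Hall-coll}, first proves the result when $\gamma$ is one of the chosen free generators by peeling off the low-weight and high-weight factors and using that $[\gamma,c]$ is itself a basic commutator when $\gamma$ is the smallest generator, and only afterwards reduces an arbitrary $\gamma\notin G_1$ to the generator case. You instead argue in the associated graded free Lie algebra, using the single structural fact that $\mathrm{ad}(\bar\gamma)$ is injective on each homogeneous piece of degree at least $2$, to push $g'$ up the lower central filtration one step at a time. Your approach is cleaner and sidesteps the combinatorics of the Hall basis, at the cost of invoking the centralizer statement for free Lie algebras.

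There is, however, a real gap at your base step $j=0$. From $[\gamma,g']\in G_2$ you correctly deduce that $\bar g'$ lies in the \emph{rational} span of $\bar\gamma$ in $G/G_1$, but then write $g'=\gamma^c h$ with $c\in\Z$ and $h\in G_1$. That passage requires $\bar g'\in\Z\bar\gamma$, which fails when $\bar\gamma$ is not primitive: the integral kernel of $\mathrm{ad}(\bar\gamma)$ on $G/G_1$ is the saturation of $\Z\bar\gamma$, not $\Z\bar\gamma$ itself. Concretely, in rank two take $\gamma=x_1^2[x_1,x_2]$ and $g'=x_1$: then $\bar\gamma=2x_1$, one computes $[\gamma,x_1]\equiv -[x_1,[x_1,x_2]]\pmod{G_3}$, while the image of $[\gamma,\cdot]\colon G_1\to G_2/G_3$ is $2\Z\cdot[x_1,[x_1,x_2]]$. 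No $g''\in G_1$ can realize this class, and your conjugation identity is unavailable since $x_1$ is not an integral multiple of $2x_1$ in the abelianization. So the step genuinely breaks, not just in presentation. (The paper's own reduction from arbitrary $\gamma$ to the generator case is equally delicate at this same point; the ``moreover'' clause as stated really wants $\bar\gamma$ primitive, which one should verify or arrange in the application.)
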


Note in particular, this Lemma tells us that even given $g=[\gamma, g']$ where $g'\not\in G_{k-1}$, as long as $g\in G_k$, we can find some $h\in G_{k-1}$ with $g=[\gamma, h]$.

\begin{proof}
To see $[\gamma,\cdot\,]$ is a homomorphism, we observe that for any $g,h\in G_{k-1}$,
\begin{align}\label{Eqn:prod}
[\gamma, gh] = [\gamma, g][\gamma, h][[h,\gamma], g] \equiv [\gamma, g][\gamma, h] \mod G_{k+1}.
\end{align}

Now we wish to show that for any $[\gamma,g]\in G_k$, in fact $[\gamma,g] \equiv [\gamma,h] \mod G_{k+1}$ for some $h\in G_{k-1}$. Express $g= c_1 c_2\cdots c_m$ as in Proposition \ref{Hall-coll}. Then,
\[
[\gamma, g] = [\gamma, c_1 c_2 \cdots c_m].
\]
If $c_{m_1}\cdots c_{m_2}$ is the subword of commutators of weight ${k}$, we will show that we can take $h = c_{m_1}\cdots c_{m_2}$. In particular, it follows that if the commutator expression of $g$ consists of commutators of weight at most $k-1$, then $[\gamma, g]$ cannot lie in $G_k$ unless it is trivial.

We proceed by induction on $k$. In the case that $k=1$, then $G_{k-1}=G$, and the Lemma already holds. To see we can choose $h$ of the desired form, observe that if the first $m_2$ commutators $c_i$ are of weight $0$, then by (\ref{Eqn:prod}),
\[
[\gamma, g] \equiv [\gamma, c_1 \cdots c_{m_2}][\gamma, c_{m_2+1} \cdots c_m] \equiv [\gamma, c_1 \cdots c_{m_2}] \mod G_2.
\]

Suppose then $k>1$. We first address the case where $\gamma$ is a generator of $G$. As above, by (\ref{Eqn:prod}),
\[
[\gamma, g] \equiv [\gamma, c_1 \cdots c_{m_2}]  \mod G_{k+1}.
\]
Moreover, noting that (\ref{Eqn:prod}) still holds when just $h\in G_{k-1}$, we have
\[
[\gamma, g] \equiv [\gamma, c_1 \cdots c_{m_1-1}][\gamma, c_{m_1} \cdots c_{m_2}] \mod G_{k+1}.
\]
This shows $[\gamma, c_1 \cdots c_{m_1-1}]$ must also lie in $G_k$. That is to say,
\[
[\gamma, c_1 \cdots c_{m_1-1}] \equiv 1 \mod G_k.
\]
Let $\ell$ denote the largest index of a commutator of weight less than $k-1$. Then, again,
\[
[\gamma, c_1 c_2 \cdots c_{m_1-1}] \equiv [\gamma, c_1 c_2 \cdots c_{\ell}] [\gamma, c_{\ell+1}\cdots c_{m_1-1}] \equiv 1 \mod G_k.
\]
As $[\gamma, c_{\ell+1}\cdots c_{m_1-1}] \in G_{k-1}$, so too $[\gamma, c_1 c_2 \cdots c_{\ell}]\in G_{k-1}$. But by induction, then $[\gamma, c_1 c_2 \cdots c_{\ell}]$ must be trivial. Thus in fact $[\gamma, c_{\ell+1}\cdots c_{m_1-1}]\equiv 1 \mod G_k$.

Now, if we chose our ordering to have $\gamma$ as the smallest generator, then
\[
[\gamma, c_{\ell+1}\cdots c_{m_1-1}] \equiv \prod [\gamma, c_i] \mod G_k
\]
is a product of basic commutators of weight $k$. As these basic commutators freely generate $G_{k-1}/G_k$, each $[\gamma, c_i]$ must be trivial, which is to say, each $c_i$ must be trivial. Hence $[\gamma, c_1\cdots c_{m_1-1}]=1$, and $[\gamma, g] \equiv [\gamma, c_{m_1}\cdots c_{m_2}] \mod G_{k+1}$, as desired.

%
%
%
%

Suppose now $\gamma$ is not a generator, but still $\gamma\not\in G_1$. Since $k>1$, $h=c_{m_1}\cdots c_{m_2}$ does not depend on the ordering of the generators $x_i$. Thus the argument above shows for all $i$,
\[
[x_i,h]\equiv [x_i,g] \mod G_{k+1}.
\]
Hence each $[x_i,g]\in G_k$. Returning one last time to (\ref{Eqn:prod}), note that the corresponding product formula holds for the first entry, and moreover in both cases, it suffices to know $[\gamma, g],[\gamma, h]\in G_k$.

So finally,
\[
[\gamma, h] \equiv \prod [x_i, h] \equiv \prod [x_i, g] \equiv [\gamma, g] \mod G_{k+1}. \qedhere
\]
\end{proof}

For elements $\gamma,\delta\in G$, we write $\Gamma_k=\tn{Im}([\gamma,\cdot\,]: G_{k-1} \to G_k/G_{k+1})$, and similarly $\Delta_k=\tn{Im}([\delta,\cdot\,]:G_{k-1} \to G_k/G_{k+1})$.

\begin{lemma}\label{Lem:comm2}
Let $k>1$ and $\gamma,\delta\in G$ such that $\gamma\not \equiv \delta\not \equiv 1 \mod G_1$. Then $\Gamma_k \cap \Delta_k=1$.
\end{lemma}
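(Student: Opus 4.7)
My plan is to work in the associated graded Lie algebra of $G$, which by Magnus's theorem is the free Lie algebra $L = \bigoplus_{j \ge 1} L_j$ with $L_j = G_{j-1}/G_j$, freely generated by the abelianization classes $\bar{x}_1, \ldots, \bar{x}_n$ of the free generators. Under this identification, $\Gamma_k, \Delta_k \subseteq L_{k+1}$ are the images of the Lie-bracket maps $[\bar\gamma, \cdot]$ and $[\bar\delta, \cdot]$ from $L_k$, and the hypothesis translates to $\bar\gamma, \bar\delta$ being distinct nonzero elements of $L_1$.

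The first reduction is a change of basis. The homomorphism property of $[\gamma, \cdot]$ modulo $G_{k+1}$ (equation~(\ref{Eqn:prod}) in the proof of Lemma~\ref{Lem:comm}) shows that the image $\Gamma_k$ depends on $\gamma$ only through $\bar\gamma \in L_1$. Applying a Nielsen transformation to the generating set of $G$, which acts as $\GL_n(\Z)$ on $L_1 \cong \Z^n$, one arranges that $\bar\gamma$ and $\bar\delta$ are among the first two standard basis vectors, so it suffices to show $[\bar{x}_1, L_k] \cap [\bar{x}_2, L_k] = 0$ in $L_{k+1}$.

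From here I would use Hall's basic commutators: with the ordering $\bar{x}_1 < \bar{x}_2 < \cdots$, the analysis from Lemma~\ref{Lem:comm} shows that the class of $[\bar{x}_1, g]$ in $L_{k+1}$, for $g \in G_{k-1}$ written as a product of weight-$k$ basic commutators, is a $\Z$-linear combination of weight-$(k+1)$ basic commutators having $\bar{x}_1$ as their outermost left entry. By symmetry (swapping orderings), $\Delta_k$ is spanned by basic commutators with $\bar{x}_2$ outermost. The main obstacle is that these two privileged subsets are defined relative to different orderings of the generators, and Jacobi identities can identify brackets starting with $\bar{x}_1$ with brackets starting with $\bar{x}_2$ in unexpected ways. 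The intended resolution is likely to embed $L$ into its tensor algebra via Magnus: the leftmost letter of a tensor monomial is a rigid coordinate, and comparing the leftmost-letter decompositions of $[\bar{x}_1, g] = \bar{x}_1 g - g\bar{x}_1$ and $[\bar{x}_2, h] = \bar{x}_2 h - h\bar{x}_2$ in degree $k+1$ isolates $g$ and $h$ in disjoint subspaces, forcing both sides of any identification to vanish.
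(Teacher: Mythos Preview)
Your Nielsen reduction does not go through as stated. Automorphisms of $F_n$ induce only $\GL_n(\Z)$ on $L_1 \cong \Z^n$, and two distinct nonzero vectors in $\Z^n$ need not extend to a $\Z$-basis: they may fail to be primitive, or worse, be $\Q$-proportional. The second possibility is not excluded by the hypothesis $\bar\gamma \neq \bar\delta$, and in fact it breaks the lemma itself. In $G = F(x,y)$ with $\gamma = x$, $\delta = x^2$, and $k = 2$, one has $[x^2, g] \equiv [x, g]^2 \pmod{G_3}$ for every $g \in G_1$, so $\Delta_2 = 2\Gamma_2 \subset \Gamma_2$ and $\Gamma_2 \cap \Delta_2 = \Delta_2 \neq 1$. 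The application in Theorem~\ref{Thm:ss} actually has $\bar\gamma$ and $\bar\delta$ linearly independent over $\Q$, and under that stronger hypothesis your reduction becomes legitimate after passing to $\Q$-coefficients (trivial intersection over $\Q$ implies trivial intersection over $\Z$ inside a free abelian group). The paper's own argument runs into the same obstruction from a different angle: it expands each $[\gamma, c]$ in the Hall basis and checks that no single generator of $\Gamma_k$ equals a single generator of $\Delta_k$, but disjoint generating sets for two subgroups of a free abelian group do not force trivial intersection, as the same example shows.

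Even after reducing to $\bar\gamma = \bar x_1$ and $\bar\delta = \bar x_2$, your tensor-algebra sketch is incomplete. Splitting $\bar x_1 g - g \bar x_1 = \bar x_2 h - h \bar x_2$ by leftmost letter does not immediately yield $g = 0$: writing $g = \sum_i \bar x_i g_i$ and $h = \sum_i \bar x_i h_i$, the $\bar x_1$-component gives only $g = g_1 \bar x_1 - h_1 \bar x_2$, a relation rather than a vanishing. One must iterate this or feed in the Lie condition on $g$ and $h$ (for instance via the Dynkin--Specht--Wever idempotent) to close the argument; as written, this last step is a heuristic, not a proof.
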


\begin{proof}
We need to check that $[\gamma, g] \not\equiv [\delta, h]$ for any $g,h\in G_{k-1}$. We first note that $\Gamma_k$ (resp.\ $\Delta_k$) is freely generated by elements $[\gamma, c]$ (resp.\ $[\delta,c]$) where $c$ is a basic commutator of weight $k$. This follows from Lemma~\ref{Lem:comm} and the observation that $[\gamma,c]\ne[\gamma, c']$ if $c$ and $c'$ are distinct basic commutators (and similarly for $\delta$). In particular, we can factor
\[
[\gamma, c] \equiv \prod [x_i,c]^{k_i} \mod G_{k+1},
\]
and since $k>1$, each $[x_i,c]$ is itself a basic commutator. So it is enough to show that these generating sets are disjoint.

Suppose not, say, $[\gamma, c] = [\delta, c']$ for basic commutators $c$, $c'$. Then, factoring, we have
\[
\prod [x_i, c]^{k_i} \equiv \prod [x_i, c']^{\ell_i} \mod G_{k+1}.
\]
Thus $c=c'$ and $k_i=\ell_i$ for all $i$. But then
\[
[\gamma\delta^{-1}, c] \equiv [\gamma, c][\delta^{-1}, c] \equiv \prod [x_i, c]^{k_i}\prod[x_i,c]^{-k_i} = 1 \mod G_{k+1}.
\]
As $c$ is a basic commutator of weight $k$, we must have $\gamma\delta^{-1}\in G_1$, which contradicts our assumption that $\gamma \not\equiv \delta \mod G_1$.
\end{proof}

\section{The case of a single suture in genus two}\label{s:ss}

We are now prepared to prove our first theorem.

\begin{theorem}[Theorem~\ref{Thm:ss}]
Let $M$ be a taut sutured handlebody of genus two with a single connected suture. Then $M$ is a rational homology product, i.e., $M$ is certified by the trivial representation.
\end{theorem}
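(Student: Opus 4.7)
My plan is to translate the claim into a group-theoretic statement about $\pi_1(M) = F_2$ and its lower central series, then derive a contradiction via residual nilpotence if $M$ fails to be a rational homology product. First, since $M$ is a genus-two handlebody and tautness implies balance, the single-suture hypothesis forces $\chi(R_+) = \chi(R_-) = -1$, so each $R_\pm$ is a once-punctured torus; in particular $H_1(R_\pm;\Q) \cong H_1(M;\Q) \cong \Q^2$. The trivial representation is homologically self-dual, so Proposition~\ref{Prop:AD3.1} reduces the problem to showing $H_1(M, R_+;\Q) = 0$, which (via the long exact sequence of the pair and an Euler-characteristic count) is equivalent to the inclusion-induced map $H_1(R_+;\Q) \to H_1(M;\Q)$ being surjective---that is, the image of $\pi_1(R_+)$ in $F_2^{\tn{ab}}\otimes\Q = \Q^2$ spans.

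Suppose for contradiction that this fails. Then the image of $\pi_1(R_+)^{\tn{ab}}$ in $\Z^2$ has rank at most one; by the symmetric argument (if instead $\pi_1(R_-)^{\tn{ab}}$ had rank-two image we would be done), I may assume the same for $R_-$. Van Kampen applied to $\partial M = R_+\cup \gamma \cup R_-$ shows $\pi_1(\partial M)$ is generated by $\pi_1(R_\pm)$, and $\pi_1(\partial M) \twoheadrightarrow \pi_1(M)$ since $M$ is a handlebody; hence the combined images in $\Z^2$ span, so both rank-one subgroups are present and transverse. After changing basis of $F_2 = \langle x, y\rangle$ and performing Nielsen transformations on the rank-two free groups $\pi_1(R_\pm)$, I can arrange generators $a_+ = x^{d_+} u_+, b_+ = v_+$ for $\pi_1(R_+)$ and $a_- = y^{d_-} u_-, b_- = v_-$ for $\pi_1(R_-)$, with $d_\pm \neq 0$ and $u_\pm, v_\pm \in G_1 = [F_2, F_2]$. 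The boundary of a once-punctured torus is the commutator of its standard basis, so $\gamma = [a_+, b_+]$ and $\gamma^{-1} = [a_-, b_-]$ in $\pi_1(M)$.

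The heart of the proof is an induction on $k$ showing $\gamma \in G_k$ for every $k \geq 1$. The base case is immediate since $\gamma$ is a commutator. For the inductive step, assume $\gamma \in G_{k-1}$. Expanding $[a_+, b_+] = [x^{d_+} u_+, v_+]$ via the identity $[gh, z] = [h, z]^{g^{-1}} \cdot [g, z]$ and iteratively invoking Lemma~\ref{Lem:comm}, I would show that the image of $\gamma$ in $G_{k-1}/G_k$ lies in $\Gamma_{k-1}$, the image of $[x,\,\cdot\,] \colon G_{k-2} \to G_{k-1}/G_k$. The symmetric argument on the $R_-$ side places $\gamma \bmod G_k$ in $\Delta_{k-1}$. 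Lemma~\ref{Lem:comm2}, applied to $x, y \in F_2$ (distinct and nontrivial mod $G_1$), then forces $\gamma \in G_k$. The main technical obstacle is the collection step---showing that a commutator of two elements of $\langle x\rangle \cdot G_1$ can, modulo $G_k$, be straightened into a single commutator of the form $[x, h]$ with $h \in G_{k-2}$---and this is where the weight-graded basic commutator calculus of Section~\ref{s:bc} does its essential work.

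By residual nilpotence of $F_2$, $\bigcap_k G_k = 1$, so $\gamma = 1$ in $\pi_1(M)$. Dehn's lemma then produces an embedded disk $D \subset M$ with $\partial D = \gamma$. Since $\gamma$ is separating on $\partial M$, cutting $M$ along $D$ yields two solid tori, each containing one of the $R_\pm$ as a subsurface of its torus boundary. Consequently the inclusion $\pi_1(R_\pm) = F_2 \hookrightarrow \pi_1(\tn{solid torus}) = \Z$ has nontrivial kernel, so by the loop theorem $R_\pm$ is compressible---contradicting the tautness of $(M,\gamma)$. This contradiction proves $M$ is a rational homology product.
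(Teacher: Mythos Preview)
Your argument is correct and follows essentially the same route as the paper: assume the inclusion-induced maps on $H_1$ fail to surject, trap the suture $\gamma$ in every term of the lower central series of $F_2$ via Lemmas~\ref{Lem:comm} and~\ref{Lem:comm2}, and conclude $\gamma=1$ by residual nilpotence. The paper applies Lemma~\ref{Lem:comm2} directly to the elements $a\in\pi_1(R_+)$ and $c\in\pi_1(R_-)$ with nonzero homology class (rather than first changing the basis of $F_2$ to $x,y$), and it extracts the contradiction immediately from the failure of $\pi_1(R_+)\to\pi_1(M)$ to be injective---your Dehn's-lemma detour through solid tori is valid but unnecessary.
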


\begin{proof}
Suppose $M$ is not a homology product. We will show this implies the image of $\gamma$ in $\pi_1M$ must be trivial, contradicting the condition that $\pi_1R_\pm$ inject.

Notice $R_\pm$ are necessarily once-punctured tori, so $M$ is balanced. By Proposition~\ref{Prop:AD3.1} as applied to the trivial representation, both $H_1(M, R_\pm)\ne 0$. In particular, by the exactness the long exact sequence of pairs
\[
\cdots \to H_1(M, R_\pm) \to H_1(R_\pm)\to H_1(M) \to \cdots,
\]
neither $H_1(R_\pm)$ surject onto $H_1(M)$.


Mayer-Vietoris gives the exact sequence
\[
H_1(\gamma)\to H_1(R_+)\oplus H_1(R_-)\to H_1(\partial M).
\]
The connected suture $\gamma$ is a boundary in both $R_\pm$, so the image of the first map is zero and by exactness, the second map is an injection. Working with rational coefficients, a dimension count shows it is in fact an isomorphism.

When $R_\pm$ are then included into $M$, the corresponding composition on homology, $H_1(R_+)\oplus H_1(R_-)\to H_1(M)$, is a surjection. As neither $H_1(R_\pm)$ individually surject, the image of each has rank $1$.



Now consider $\pi_1(R_\pm)$ as subgroups of $\pi_1(M)$. We may pick $a,b\in \pi_1(R_+)$, $c,d\in \pi_1(R_-)$ so that $\gamma=[a,b]=[c,d]$. Suppose $a$ and $c$ represent nonzero homology classes in $H_1(M)$. Then we may express $[b]$ and $[d]$ as rational multiples of $[a]$ and $[c]$, respectively; say, $p[a]=q[b]$ and $r[c]=s[d]$. Fix $b'=b^qa^{-p}$ and $d'=d^rc^{-s}$; then $[b'] = [d'] = 0 \in H_1(M)$.


In the notation of Lemma~\ref{Lem:comm2}, we see $[a,b']\in A_2$. We claim then that $[a,b]\in A_2$. It suffices, by Lemma~\ref{Lem:comm}, to show $[a,b]\in (\pi_1(M))_2$. Observe that
\[
[a,b'] \equiv [a,b]^q \mod (\pi_1(M))_2.
\]
Since $(\pi_1(M))_2$ is torsion free, our claim follows. Similarly, $[c,d]\in C_2$. We conclude $\gamma = [a,b]=[c,d]\in A_2\cap C_2$.

Now, $a$ and $c$ must have distinct images in $H_1(M)$, since together they span the image of $H_1(R_+)\oplus H_1(R_-)$ in $H_1(M$). Thus $A_2\cap C_2=1$ by Lemma~\ref{Lem:comm2}, so in fact $[a,b], [c,d]\in (\pi_1(M))_3$. Then $[a,b], [c,d]\in A_3\cap C_3=1$, and inducting in this way shows $[a,b], [c,d]\in (\pi_1(M))_k$ for all $k$. But as remarked earlier, the infinite intersection of these groups is trivial. Thus we must have had $\gamma=1\in \pi_1(M)$.
\end{proof}





\begin{remark}
In the situation of a single suture on a handlebody $M$ of genus two, note that $M$ is a rational homology product exactly when the suture curve $\gamma$ has nontrivial image in $\pi_1(M)$.
\end{remark}


\subsection{Failure with multiple sutures} When the assumption of a single suture is dropped, the conclusion of Theorem~\ref{Thm:ss} no longer holds. Consider the following example.

\begin{figure}[h!]
  \includegraphics{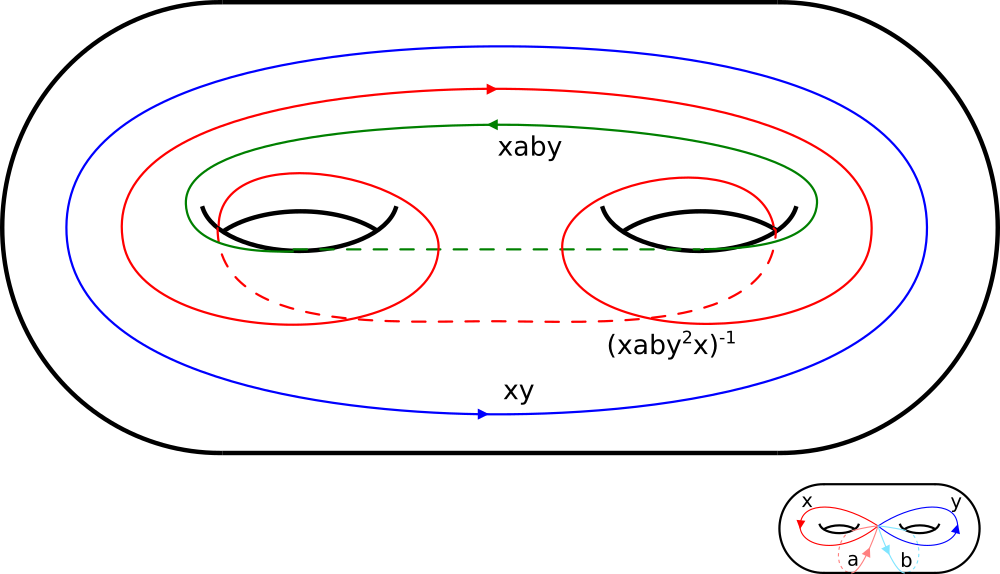}
  \caption{}
  \label{Fig:g2ex}
\end{figure}

\begin{example}[Theorem~\ref{Thm:nonhom}]\label{ex:gen2}
Let $M$ be a genus-two handlebody, with suture $\gamma$ consisting of the three curves shown in Figure~\ref{Fig:g2ex}. These correspond to the free homotopy classes $yx$, $xaby$, and $(xaby^2x)^{-1}$.

The boundary components $R_\pm$ are topological pants. Their fundamental groups, as subgroups of $\pi_1(\partial M)$, are both freely generated by $yx$ and $xaby$. These inject into $\pi_1(M)$ as the subgroup $\langle xy, yx\rangle$. Abelianizing, we see this is not a homology product: the generators of the fundamental group map to the same cycle in $H_1(M)$.
\end{example}

We return to this example in the next section to prove its tautness using tools developed therein.

\begin{remark}
Though multiple sutures are required in genus two, more generally a taut sutured manifold with a single suture can fail to be a rational homology product. We can construct a genus four handlebody with a single suture with this same property. To do so, we attach two {\em sutured one-handles} to $M$ above (see Section~\ref{s:hg} for a definition), connecting the curve $(xaby^2)^{-1}$ to $xy$ and to $xaby$. By Lemma~\ref{Lem:addhandle}, this is still taut, and still fails to be a rational homology product.

Notice no such example can exist in genus three, due to the odd genus; any odd-genus sutured handlebody must have a suture set comprised of an even number of curves.
\end{remark}


%

\section{Restricting to one-dimensional representations}\label{s:1d}

The following Proposition is a straightforward generalization of Proposition~5.2 of \cite{AD}, in which $g=n=2$, and may be proved with an analogous argument. We take $M$ to be a balanced sutured handlebody of genus $g$, with $R_+$ connected. Then $\pi_1(M)$ and $\pi_1(R_+)$ are free groups of rank $g$.

Let $\pi_1(M)=\langle x_1,\ldots,x_g\rangle$ and $\pi_1(R_+)=\langle a_1,\ldots, a_g \rangle$, and let $i_*:\pi_1(R_+)\to \pi_1(M)$ be the map induced by the inclusion $i: R_+\into M$. Given a word $w\in \pi_1(M)$, we write $\partial_{x_i}w$ for its Fox derivatives in $\Z[x_1,\ldots, x_g]$ (\cite{Fox}). Notice that any representation $\alpha:\pi_1M \to \GL(V)$ extends naturally to a ring homomorphism $\alpha: \Z[x_1,\ldots, x_g] \to \tn{End}(V)$.

\begin{proposition}\label{Prop:tautness}
For a fixed representation $\alpha:\pi_1(M)\to \GL(V)$, with $\dim V=n$, if the sutured handlebody $M$ is an $\alpha$-homology product, then the $gn\times gn$ matrix
\[
\bigg(\alpha\big(\partial_{x_i}i_*(a_j)\big)\bigg)_{i,j}
\]
has nonzero determinant.

Furthermore, when $\alpha$ is homologically self-dual, this condition is sufficient.
\end{proposition}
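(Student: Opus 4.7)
The plan is to identify $H_*(M,R_+;E_\alpha)$ with the kernel and cokernel of the Fox Jacobian matrix $J := \bigl(\alpha(\partial_{x_i}i_*(a_j))\bigr)_{i,j}$ viewed as an endomorphism of $V^g$, from which both directions of the proposition follow at once.

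First I would use that $R_+$ and $M$ are both $K(F_g,1)$'s. For a balanced sutured handlebody $\chi(R_+)=\chi(M)=1-g$, so $R_+$ connected forces $\pi_1(R_+)$ free of rank $g$; both $R_+$ and $M$ deformation retract onto one-vertex, $g$-edge graphs, and the inclusion $i:R_+\into M$ is modeled up to homotopy by the cellular map sending the edge labeled $a_j$ to the loop $i_*(a_j)$ read off in the generators $x_i$. Next I would compute the relative chain complex $C_*(M,R_+;E_\alpha)$ via the algebraic mapping cone of the induced chain map $i_\#:C_*(R_+;E_\alpha)\to C_*(M;E_\alpha)$, obtaining
\[
0\to V^g\xrightarrow{\partial_2}V^g\oplus V\xrightarrow{\partial_1}V\to 0,
\]
where $\partial_1(m,s)=d^M m + s$ and $\partial_2(r)=(Jr,-d^R r)$, with $d^M$, $d^R$ the standard twisted boundary maps of the two wedges of circles. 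The chain-map identity gives $d^M\circ J=d^R$, so in particular $\ker J\subseteq\ker d^R$. From this one computes directly that $H_2(M,R_+;E_\alpha)=\ker\partial_2=\ker J$, and, identifying $\ker\partial_1\cong V^g$ via the projection $(m,-d^M m)\mapsto m$, that $H_1(M,R_+;E_\alpha)=\tn{coker}\,J$. Both groups therefore vanish precisely when $\det J\ne 0$.

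The proposition follows at once: if $M$ is an $\alpha$-homology product then $H_*(M,R_+;E_\alpha)=0$, so $\det J\ne 0$; conversely, under the self-duality hypothesis $\det J\ne 0$ yields $H_*(M,R_+;E_\alpha)=0$, which Proposition~\ref{Prop:AD3.1} promotes to the full $\alpha$-homology product condition.

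The main step requiring care is the identification of the degree-1 part of $i_\#$ with the Fox Jacobian $J$. This is the classical content of Fox calculus: writing $i_*(a_j)$ as a word in the $x_i$ and lifting it to a 1-chain in the universal cover of $M$'s spine produces the element $\sum_i\partial_{x_i}i_*(a_j)\cdot\tilde x_i$ in $C_1(\widetilde{\vee_g S^1};\Z[F_g])$, and tensoring over $\Z[\pi_1(M)]$ with $V$ via $\alpha$ recovers the claimed matrix entry. Everything else is bookkeeping with the mapping cone and the elementary computations of $\ker$ and $\tn{coker}$ above.
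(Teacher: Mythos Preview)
Your argument is correct and is essentially the approach the paper has in mind: the paper does not spell out a proof but defers to Proposition~5.2 of Agol--Dunfield, remarking only that the nonvanishing of the determinant is equivalent to the vanishing of $H^2(M,R_+;E_\alpha)$ (hence of $H_1(M,R_-;E_\alpha)$ by Poincar\'e duality), after which Proposition~\ref{Prop:AD3.1} finishes. Your mapping-cone computation carries out exactly this Fox-calculus identification on the homology side, obtaining $H_2(M,R_+;E_\alpha)=\ker J$ and $H_1(M,R_+;E_\alpha)=\tn{coker}\,J$ directly, and then invoking Proposition~\ref{Prop:AD3.1}; the only difference from the paper's one-line sketch is the cosmetic choice of homology over cohomology.
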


The condition of this determinant being nonzero corresponds exactly to $H^2(M, R_+; E_\alpha)$ (and therefore $H_1(M,R_-;E_\alpha)$) vanishing. In the case $\alpha$ is not homologically self-dual, we can still verify tautness by checking that neither this determinant nor that associated to $\alpha^*$ vanishes.

\begin{remark}
From the perspective of the representation variety $\Hom(\pi_1(M),\GL(V))$, the condition given by Proposition~\ref{Prop:tautness} determines a Zariski open subspace of certifying representations. In particular, if a given representation $\alpha$ realizes $M$ as an $\alpha$-homology product, in fact almost every choice of representation $\beta: \pi_1(M) \to \GL(V)$ will also work. Moreover, this gives an indication of how we should expect the situation to differ when we consider $M$ not a handlebody, in that, within the associated representation variety, the set of certifying representations is either empty or the complement of a collection of lower dimensional subvarieties.
\end{remark}

\begin{example}[Theorem~\ref{Thm:nonhom}]
Consider $(M,\gamma)$ from Example~\ref{ex:gen2} and let $\alpha:\pi_1(M)\to\GL_1(\C)$ be any one-dimensional representation. By Proposition~\ref{Prop:tautness}, $M$ is an $\alpha$-homology product when
\[
{\setstretch{1.5}
\det
\begin{pmatrix}
	\alpha\big(\partial_x(xy)\big)	&	\alpha\big(\partial_y(xy)\big)	\\
	\alpha\big(\partial_x(yx)\big)	&	\alpha\big(\partial_y(yx)\big)
\end{pmatrix}
\ne 0.
}
\]
That is to say,
\[
{\setstretch{1.5}
\det
\begin{pmatrix}
	\alpha\big(\partial_x(xy)\big)	&	\alpha\big(\partial_y(xy)\big)	\\
	\alpha\big(\partial_x(yx)\big)	&	\alpha\big(\partial_y(yx)\big)
\end{pmatrix}
=
\det
\begin{pmatrix}
	\alpha(1)	&	\alpha(x)	\\
	\alpha(y)	&	\alpha(1)
\end{pmatrix}
}
=1-\alpha(xy)\ne 0.
\]
Rephrasing what we saw in Example~\ref{ex:gen2}, this shows in particular we cannot take $\alpha$ to be the trivial representation, where $\alpha(x)=\alpha(y)=1$. However, for a generic choice of $\alpha$, this will not be $0$.
\end{example}

Returning to the setting of a genus-$g$ sutured handlebody $M$, consider the case of a one-dimensional representation $\alpha: \pi_1(M)\to \GL_1(\C)$. Here, we have an algebraic understanding of what it means to be a twisted homology product. For a word $w\in \pi_1(M)$, we write $\partial w$ for the vector of Fox derivatives of $w$ with respect to $x_1,\ldots, x_g$.

\begin{proposition}\label{Prop:1Drep}
$M$ is a one-dimensional twisted homology product if and only if the vectors of abelianized Fox derivatives $\tn{ab}(\partial i_*(a_j))$ are linearly independent.
\end{proposition}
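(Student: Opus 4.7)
My plan is to treat Proposition~\ref{Prop:1Drep} as the specialization of Proposition~\ref{Prop:tautness} to one-dimensional representations, promoted from a pointwise condition on a chosen $\alpha$ to a generic statement about the Laurent polynomial underlying the abelianized Fox matrix.

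First, I would record that any one-dimensional representation $\alpha: \pi_1(M) \to \GL_1(\C)$ factors through the abelianization $\pi_1(M) \onto \Z^g$. Consequently the induced ring homomorphism on the group ring factors as $\Z[\pi_1(M)] \onto \Z[\Z^g] \xrightarrow{\alpha} \C$, so
$$\alpha\big(\partial_{x_i} i_*(a_j)\big) = \alpha\big(\tn{ab}(\partial_{x_i} i_*(a_j))\big).$$
This identifies the $g \times g$ matrix appearing in Proposition~\ref{Prop:tautness} with the evaluation at $\alpha$ of the matrix whose entries are the abelianized Fox derivatives, viewed as elements of $\Z[\Z^g]$. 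The linear independence of the columns $\tn{ab}(\partial i_*(a_j))$ over the fraction field of $\Z[\Z^g]$ is then equivalent to the Laurent polynomial $P := \det\big(\tn{ab}(\partial_{x_i} i_*(a_j))\big)_{i,j}$ being nonzero in $\Z[\Z^g]$.

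For the forward direction, if $M$ is an $\alpha$-homology product for some one-dimensional $\alpha$, then Proposition~\ref{Prop:tautness} forces $P(\alpha) \neq 0$, hence $P$ is nonzero as a Laurent polynomial, hence linear independence. For the reverse direction, I would work on the representation variety $\Hom(\pi_1(M), \GL_1(\C)) \cong (\C^*)^g$. Assuming $P \neq 0$, its vanishing locus is a proper Zariski closed subvariety. Since one-dimensional representations need not satisfy homological self-duality automatically, I would also control the condition for the dual representation $\alpha^* = \alpha^{-1}$: the corresponding Laurent polynomial is obtained from $P$ by the substitution $x_i \mapsto x_i^{-1}$, which is an automorphism of $\Z[\Z^g]$, so that polynomial too is nonzero. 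The complement of the union of the two vanishing loci is therefore a nonempty Zariski open subset of $(\C^*)^g$, and for any $\alpha$ in it, the remark following Proposition~\ref{Prop:tautness} (which allows tautness to be verified without self-duality by checking both $\alpha$ and $\alpha^*$) certifies $M$ as an $\alpha$-homology product.

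The chief (though minor) subtlety will be handling the sufficiency step without assuming homological self-duality. I would resolve it by the observation above: the nonvanishing of $P$ forces the simultaneous nonvanishing of both the $\alpha$- and $\alpha^*$-determinants on a dense open set of the representation variety, so a certifying $\alpha$ exists.
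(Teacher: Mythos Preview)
Your proposal is correct and follows essentially the same route as the paper: factor a one-dimensional $\alpha$ through the abelianization, identify the matrix in Proposition~\ref{Prop:tautness} with the evaluation of the abelianized Fox matrix, and reduce both directions to whether the Laurent polynomial determinant $P\in\Z[\Z^g]$ is nonzero. Where you go beyond the paper is in the sufficiency direction: the paper's proof simply asserts that ``a representation $\alpha$ as in Proposition~\ref{Prop:tautness} exists exactly when the vectors $\tn{ab}(\partial i_*(a_j))$ are linearly independent,'' without making explicit why nonvanishing of the determinant suffices when $\alpha$ is not a priori homologically self-dual. Your argument handles this cleanly by observing that the dual polynomial is $P$ under the automorphism $x_i\mapsto x_i^{-1}$, hence also nonzero, so the remark following Proposition~\ref{Prop:tautness} applies on a nonempty Zariski open set. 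This is a genuine (if minor) tightening of the paper's argument; an alternative and equally short fix would be to restrict to $\alpha$ with values in $U(1)\subset\C^*$, where self-duality is automatic and $P$ still does not vanish identically.
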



\begin{proof}
For the first equivalence, note that for $\alpha$ to satisfy this condition in Proposition~\ref{Prop:tautness}, we require linear independence of the vectors $\alpha(\partial i_*(a_j))$. Consider the composition of maps
\[
\pi_1(M) \xrightarrow{\partial} \Z[\pi_1(M)] \xrightarrow{\alpha} (\GL_1(\C))^g.
\]
Since $\GL_1(\C)$ is abelian, $\alpha$ factors through the abelianization
\[
\pi_1(M) \xrightarrow{\partial} \Z[\pi_1(M)] \xrightarrow{ab} \Z[\Z^g] \xrightarrow{\alpha} (\GL_1(\C))^g.
\]

Thus in this context, a representation $\alpha$ as in Proposition~\ref{Prop:tautness} exists exactly when the vectors $\tn{ab}(\partial i_*(a_j))$ are linearly independent.
%
\end{proof}

We end this section with a lemma which provides a condition for being a one-dimensional twisted product. It will prove useful for finding non-examples. Recall the {\em derived series} $G^{(k)}$ of $G$ is defined by $G^{(0)}=G$ and $G^{(k+1)}=[G^{(k)},G^{(k)}]$.

\begin{lemma}\label{Lem:1Drep}
If $M$ is a one-dimensional twisted homology product, then $\pi_1(R_\pm)\cap \pi_1(M)^{(2)}\subseteq \pi_1(R_\pm)^{(1)}$.
\end{lemma}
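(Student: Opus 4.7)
The plan is to argue the contrapositive via Fox calculus, transferring the hypothesis $w \in \pi_1(M)^{(2)}$ into a linear relation among the vectors $v_j := \tn{ab}(\partial i_*(a_j))$ from Proposition~\ref{Prop:1Drep}, and then collapsing that relation with the trivial augmentation to recover $\tn{ab}(w) = 0$ in $\pi_1(R_\pm)^{\tn{ab}}$. Throughout, write $H := \pi_1(R_\pm)^{\tn{ab}}$, $G := \pi_1(M)^{\tn{ab}}$, and $\phi: \Z[H] \to \Z[G]$ for the ring homomorphism induced by the abelianized inclusion.

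First I would establish that for every $u \in \pi_1(M)^{(2)}$, all abelianized Fox derivatives $\tn{ab}(\partial_{x_i} u)$ vanish in $\Z[G]$. The product rule for Fox derivatives gives
\[
\tn{ab}\bigl(\partial_{x_i}[u,v]\bigr) \;=\; (\tn{ab}(u) - 1)\,\tn{ab}(\partial_{x_i} v) \;-\; (\tn{ab}(v) - 1)\,\tn{ab}(\partial_{x_i} u),
\]
which vanishes when $u, v \in \pi_1(M)^{(1)}$; the Leibniz rule then propagates the vanishing to arbitrary products of such commutators. (Equivalently, this is the classical fact that the Magnus embedding $\pi_1(M)/\pi_1(M)^{(2)} \hookrightarrow M_2(\Z[G])$ has kernel exactly $\pi_1(M)^{(2)}$.) In particular, for $w \in \pi_1(R_\pm) \cap \pi_1(M)^{(2)}$ one gets $\tn{ab}(\partial_{x_i} i_*(w)) = 0$ for all $i$.

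Next, apply Fox's chain rule to $w$ expressed as a word in the generators $a_j$ of $\pi_1(R_\pm)$:
\[
\tn{ab}(\partial_{x_i} i_*(w)) \;=\; \sum_j c_j \cdot \tn{ab}(\partial_{x_i} i_*(a_j)), \qquad c_j := \phi\bigl(\tn{ab}(\partial_{a_j} w)\bigr) \in \Z[G].
\]
The previous paragraph gives $\sum_j c_j v_j = 0$ in $\Z[G]^g$. Proposition~\ref{Prop:1Drep} translates the hypothesis that $M$ is a one-dimensional twisted homology product into $\Z[G]$-linear independence of the $v_j$ (equivalently, $\det(v_j^{(i)})_{i,j} \neq 0$ in $\Z[G]$), and this forces $c_j = 0$ in $\Z[G]$ for every $j$. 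Finally, post-compose each $c_j$ with the trivial augmentation $\epsilon: \Z[G] \to \Z$, $g \mapsto 1$. Since $\epsilon \circ \phi$ is the augmentation of $\Z[H]$ and the augmentation of a Fox derivative $\partial_{a_j} w$ is the $a_j$-exponent sum of $w$, the identities $c_j = 0$ force every coordinate of the image of $w$ in $H \cong \Z^g$ to vanish; hence $w \in \pi_1(R_\pm)^{(1)}$.

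The main obstacle is that $\Z[G]$-linear independence of the $v_j$ does \emph{not} imply injectivity of $i_*^{\tn{ab}}: H \to G$---already in Example~\ref{ex:gen2} the abelianized inclusion has a nontrivial kernel---so $\phi$ itself can have a nonzero kernel, and one cannot lift the conclusion $c_j = 0$ back to $\tn{ab}(\partial_{a_j} w) = 0$ in $\Z[H]$. The point of the last step is that the trivial augmentation bypasses $\ker \phi$ while retaining exactly the exponent-sum data that distinguishes $\pi_1(R_\pm)^{(1)}$ from its complement in $\pi_1(R_\pm)$.
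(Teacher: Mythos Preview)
Your argument is correct and takes a genuinely different route from the paper's. The paper fixes a certifying $\alpha:\pi_1(M)\to\GL_1(\C)$, interprets $H^1(-;E_\alpha)$ as crossed homomorphisms, shows any such crossed homomorphism on $\pi_1(M)$ vanishes on $\pi_1(M)^{(2)}$, and then produces a crossed homomorphism $g$ on $\pi_1(R_\pm)$ with $g(w)\ne 0$; this forces $H^2(M,R_\pm;E_\alpha)\ne 0$ via the long exact sequence of the pair, contradicting that $\alpha$ certifies.

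You instead work entirely inside $\Z[G]$ and invoke Proposition~\ref{Prop:1Drep} directly: the chain rule turns $\tn{ab}(\partial i_*(w))=0$ into a $\Z[G]$-linear relation among the $v_j$, the nonvanishing determinant kills the coefficients (using that $\Z[G]$ is a Laurent polynomial ring, hence a domain), and the trivial augmentation recovers the exponent sums. What this buys you is that you never have to choose a particular $\alpha$, so you sidestep the question of whether the specific crossed homomorphism $g(a_j)=\delta_{ij}$ actually satisfies $g(w)\ne 0$ for the given $\alpha$; you are in effect arguing with the ``generic'' $\alpha$. The paper's approach is more cohomological and makes the obstruction in $H^2(M,R_\pm;E_\alpha)$ visible, which fits the narrative of the surrounding section; yours is shorter and stays closer to the Fox-calculus framework of Propositions~\ref{Prop:tautness} and~\ref{Prop:1Drep}. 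Both approaches rest on the same computation that abelianized Fox derivatives (equivalently, crossed homomorphisms to an abelian target) vanish on $\pi_1(M)^{(2)}$, and your parenthetical remark about the Magnus embedding is a clean way to package that fact.

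One small clarification worth making explicit: when you assert that $\sum_j c_j v_j=0$ forces $c_j=0$, you are using that the $v_j$ form a square array with nonzero determinant in the domain $\Z[G]$, so multiplying by the adjugate gives $(\det)\cdot c_j=0$ and hence $c_j=0$. This is implicit in ``$\Z[G]$-linear independence'' but deserves a word since Proposition~\ref{Prop:1Drep} does not name the coefficient ring.
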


\begin{proof}
Suppose $M$ is an $\alpha$-homology product for some $\alpha:\pi_1(M)\to \GL_1(\C)$.

Recall that $H^1(M; E_\alpha)$ is the group of all twisted homomorphisms $f: \pi_1(M)\to \C$, modulo twisted homomorphisms of the form $\hat{z}(g)=\alpha(g)\cdot z - z$ for $z\in \C$. Any $f\in H^1(M; E_\alpha)$ necessarily vanishes on $\pi_1(M)^{(2)}$. We see this first by observing that 
\begin{align*}
f([u,v])	&=	f(u)+\alpha(u) f(v) + \alpha(uv) f(u^{-1}) - \alpha(uvu^{-1}) f(v^{-1})	\\
		&=	f(u)+\alpha(u) f(v) - \alpha(uvu^{-1}) f(u) - \alpha(uvu^{-1}v^{-1}) f(v)	\\
		&=	(1-\alpha(v)) f(u) - (1-\alpha(u)) f(v),
\end{align*}
since $\GL_1(\C)$ is abelian. Now, this is zero when $\alpha(u)=\alpha(v)=1$, for instance, for $u,v\in \pi_1(M)^{(1)}$. Such elements $[u,v]$ normally generate $\pi_1(M)^{(2)}$, so $f$ must vanish on all of $\pi_1(M)^{(2)}$.

Consider now $H^1(R_\pm; E_\alpha)$. Any twisted homomorphism is determined by its values on the generators $a_1,\ldots, a_g$ of $\pi_1(R_\pm)$. Fix $w\in \pi_1(R_\pm)\cap \pi_1(M)^{(2)}$ and let $\#_{a_i} w$ denote the number of occurrences of $a_i$ (counted with sign) in $w$. Notice $\#_{a_i} w = 0$ for all $i$ is exactly the condition for $w\in \pi_1(R_\pm)^{(1)}$. Supposing $w\not\in \pi_1(R_\pm)^{(1)}$, then some $\#_{a_i} w \ne 0$. Define $g\in H^1(R_\pm; E_\alpha)$ by $g(a_j)=\delta_{ij}$. By construction, $g(w)\ne 0$.

Consider the long exact sequence of cohomology groups
\[
\cdots \to H^1(M; E_\alpha)\xrightarrow{i_*} H^1(R_\pm; E_\alpha) \xrightarrow{\delta} H^2(M, R_\pm; E_\alpha) \to \cdots.
\]
As any $f\in H^1(M;E_\alpha)$ vanishes on $w$, the twisted homomorphism $g$ constructed above does not lie in the image of $i_*$. But by exactness, $g$ then is not in the kernel of $\delta$, so $H^2(M, R_\pm; E_\alpha)\ne 0$. By Poincar\'e duality, then $H_1(M, R_\mp; E_\alpha)\ne 0$, which contradicts our assumption that $M$ is an $\alpha$-homology product.
\end{proof}

\section{Higher genus examples which are not one-dimensional homology products}\label{s:hg}

In this section, we consider sutured handlebodies of genus at least three, and see these are inherently more complicated than genus two. We explicitly construct an example of a genus-three handlebody which is not a one-dimensional homology product, and describe how to modify this to produce examples in all higher genus.

For ease of notation, we treat $\gamma$ as a collection of annuli instead of curves. Given a sutured manifold $M$, we can construct a new sutured manifold $N$ by attaching a {\em sutured one-handle}. The one-handle $D^2\times D^1$ is given a product sutured structure $I\times (D^1\times D^1)$. It is attached to $M$ along the disks $I\times (D^1 \times \partial D^1)$, which we require to meet $\gamma$ in two strips so that $0\times (D^1\times \partial D^1)\subset R_-$ and $1\times (D^1\times \partial D^1)\subset R_+$. This construction is illustrated in Figure~\ref{Fig:suthandle}.

\begin{figure}[t!]
  \includegraphics[width=350pt]{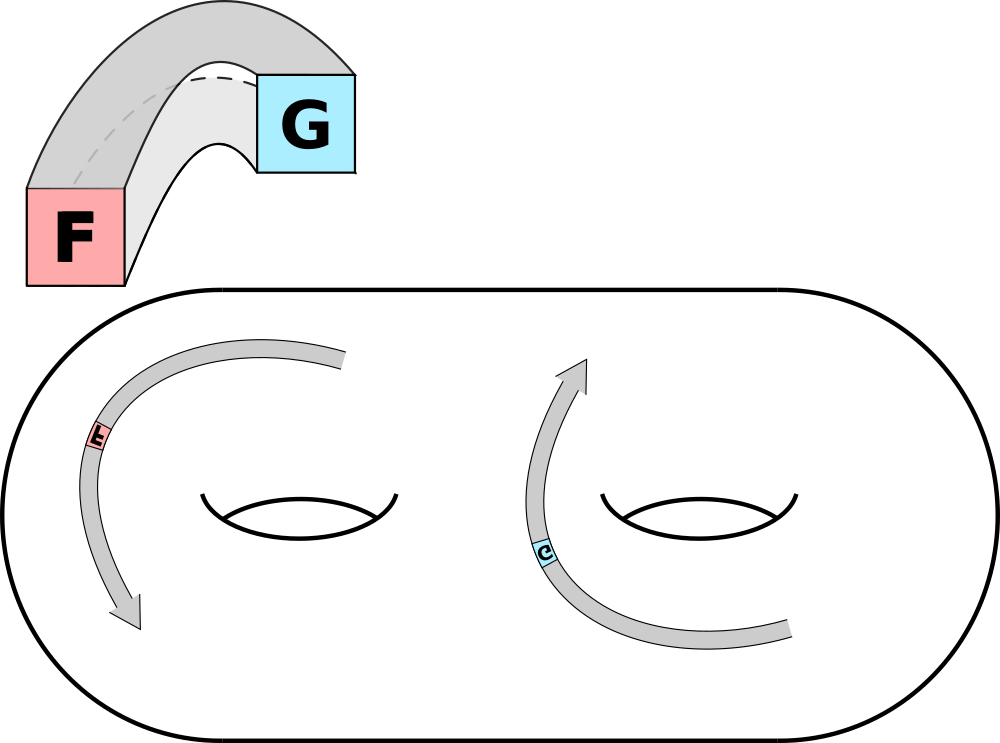}
  \caption{The grey bands represent portions of the suture $\gamma$, viewed as annuli and oriented by the arrows drawn. Homeomorphisms $F$ and $G$ identify the corresponding disks. The letters $F$ and $G$ indicate the maps' respective orientations and restrictions to the boundary.}
  \label{Fig:suthandle}
\end{figure}


\begin{lemma}\label{Lem:addhandle}
Suppose $M$ is a taut sutured manifold. If $(N, R'_\pm, \gamma')$ is obtained by attaching a sutured one-handle to $M$, then $N$ is also taut. Moreover, for any representation $\alpha:\pi_1(M)\to {\tn GL}(V)$, there is a representation $\alpha':\pi_1(N)\to {\tn GL}(V)$ with $\alpha'\vert_{\pi_1(M)}=\alpha$ such that $M$ is an $\alpha$-homology product if and only if $N$ is an $\alpha'$-homology product.
\end{lemma}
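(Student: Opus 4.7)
The lemma splits into two essentially independent claims: tautness of $N$, and construction of a compatible extension $\alpha'$. The plan is to handle them separately.

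First, for tautness, I would observe that attaching the sutured one-handle is precisely the reverse of a Gabai product-disk decomposition. Specifically, the cocore of the attached handle is a properly embedded disk $D \subset N$ meeting $\gamma'$ transversally in exactly two points, with the two arcs of $\partial D \setminus \gamma'$ lying one in $R'_+$ and one in $R'_-$. Cutting $N$ along $D$ recovers $M$. Since product-disk decompositions preserve tautness in both directions (\cite{gabai-hierarchies}), tautness of $M$ gives tautness of $N$ immediately.

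For the homology product, note that up to homotopy, attaching the 1-handle amounts to attaching a single 1-cell $e$ between two points in $\partial M$, so $\pi_1(N) \cong \pi_1(M) * \langle z \rangle$, where $z$ is the loop formed from $e$ and a chosen return path $\beta \subset M$. I would define $\alpha'$ by restricting to $\alpha$ on $\pi_1(M)$ and setting $\alpha'(z) = \tn{id}$. With this choice, the plan is to prove the isomorphism
\[
H_*(M, R_\pm; E_\alpha) \xrightarrow{\cong} H_*(N, R'_\pm; E_{\alpha'})
\]
induced by inclusion, from which the ``if and only if'' is immediate. The key topological observation is that the band $B_+$ added to $R_+$ to form $R'_+$ contains a core arc $e'$ such that $e$ and $e'$ are homotopic rel endpoints inside the 3-ball $H$, so $e$ can be slid entirely into $R'_+$. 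At the CW level, one can therefore arrange the 1-cell added to form $N$ to lie in $R'_+$, so that it cancels in the relative chain complex and yields $C_*(N, R'_+; E_{\alpha'}) \cong C_*(M, R_+; E_\alpha)$. A parallel argument using $B_-$ handles the pair involving $R'_-$.

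The main subtlety I anticipate is ensuring that a single extension $\alpha'$ works simultaneously for both pairs $(N, R'_+)$ and $(N, R'_-)$, since the cancellation slides $e$ to an arc in $R'_+$ in one case and to a different arc in $R'_-$ in the other. Because $H$ is simply connected, all such arcs represent conjugate elements of $\pi_1(N)$, and the choice $\alpha'(z) = \tn{id}$ makes the monodromy along any arc through $H$ trivial, so no obstruction arises at the level of the coefficient system. Still, the precise cellular-level bookkeeping, in particular matching boundary maps correctly when combining the two isomorphisms, is where I would expect to spend the bulk of the effort.
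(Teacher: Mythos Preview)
Your proposal is correct and takes a genuinely different route from the paper.

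For tautness, you go directly through Gabai: the cocore of the handle is a product disk, and product-disk decomposition reflects tautness. The paper does not argue this point separately; tautness of $N$ comes out as a byproduct of the homology-product half together with Friedl--Kim (a certifying $\alpha$ for $M$ yields a certifying $\alpha'$ for $N$, hence $N$ is taut).

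For the homology-product equivalence, the paper's proof is entirely algebraic and runs through Proposition~\ref{Prop:tautness}. One notes $\pi_1(N)=\pi_1(M)*\langle x\rangle$ and $\pi_1(R'_\pm)=\pi_1(R_\pm)*\langle x\rangle$, sets $\alpha'(x)=\mathrm{id}$, and observes that the Fox-derivative matrix for $(N,R'_+)$ is block-diagonal with the matrix for $(M,R_+)$ and an identity block, so the determinants agree; the same holds for the dual representation, and the remarks following Proposition~\ref{Prop:tautness} finish. This is very short but leans on Proposition~\ref{Prop:tautness}, which as stated applies to handlebodies with $R_+$ connected. Your topological argument, by contrast, yields the stronger isomorphism $H_*(M,R_\pm;E_\alpha)\cong H_*(N,R'_\pm;E_{\alpha'})$ directly and works for arbitrary $M$. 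The cellular bookkeeping you flag can in fact be sidestepped: a relative Mayer--Vietoris for $(N,R'_+)=(M,R_+)\cup(H,B_+)$ has both $(H,B_+)$ and the intersection $(D^2\sqcup D^2,\text{arc}\sqcup\text{arc})$ consisting of pairs of contractible spaces, so their twisted relative homology vanishes and the inclusion $(M,R_+)\hookrightarrow(N,R'_+)$ is an isomorphism on $H_*$ for \emph{any} extension $\alpha'$. The same works with $R_-$ and $B_-$, so the subtlety you anticipated about a single $\alpha'$ serving both sides does not actually arise.
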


\begin{proof}
Note $\pi_1 (N)=\pi_1(M)*\langle x\rangle$, where $x$ is the core of the one-handle. Moreover, $\pi_1(R'_\pm)=\pi_1(R_\pm)*\langle x\rangle$. Define $\alpha'$ to agree with $\alpha$ on $\pi_1(M)$ and to map $x$ to the identity.

Notice
\[
\det \bigg(\alpha'\big(\partial_{x'_i}i_*(a'_j)\big)\bigg) =
\det \begin{pmatrix}
\alpha\big(\partial_{x_i}i_*(a_j)\big)	&	0\\
0							&	I
\end{pmatrix}
=
\det \bigg(\alpha\big(\partial_{x_i}i_*(a_j)\big)\bigg).
\]
The corresponding equality also holds for the dual representations. Thus the result follows from Proposition~\ref{Prop:tautness}.
\end{proof}


\begin{theorem}[Theorem~\ref{Thm:hg}]\label{Thm:ab}
For every $g\ge 3$, there is a taut sutured handlebody $M_g$ of genus $g$ such that $M$ is not an $\alpha$-homology product for any representation $\alpha: \pi_1 (M_g) \to \GL_1\C$.
\end{theorem}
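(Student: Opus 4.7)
The plan is to leverage Lemma~\ref{Lem:1Drep}, which supplies an algebraic obstruction: if $M$ is a one-dimensional twisted homology product, then $\pi_1(R_\pm) \cap \pi_1(M)^{(2)} \subseteq \pi_1(R_\pm)^{(1)}$. So to prove the theorem it suffices to produce a taut sutured handlebody $M_g$ whose inclusion $i_*: \pi_1(R_+) \hookrightarrow \pi_1(M_g)$ violates this containment, i.e., for which some element $w \in \pi_1(R_+)$ has nontrivial image in $H_1(R_+; \Z)$ yet lands in the second derived subgroup $\pi_1(M_g)^{(2)} = [[\pi_1(M_g), \pi_1(M_g)], [\pi_1(M_g), \pi_1(M_g)]]$.

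First I would handle $g=3$ by direct construction. Writing $\pi_1(M_3) = F_3 = \langle x,y,z\rangle$, I would design an explicit suture pattern on a genus-three handlebody so that one of the free generators $a_j$ of $\pi_1(R_+)$ maps under $i_*$ to a word conjugate to something of the form $[[x,y],[x,z]]$ (or a similar representative of $F_3^{(2)}$). Since $a_j \notin \pi_1(R_+)^{(1)}$ automatically, the obstruction from Lemma~\ref{Lem:1Drep} would then apply. Concretely, I would specify $M_3$ by a diagram exhibiting the boundary curves of $R_\pm$, then read off the words $i_*(a_j)$ and confirm that at least one of them lies in $F_3^{(2)}$ by checking that its abelianization in $F_3$ and all of its abelianized Fox derivatives vanish.

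Next I would verify tautness of $M_3$. Since by construction no one-dimensional representation can certify $M_3$, tautness must be witnessed at higher dimension; the cleanest route is to exhibit a homologically self-dual representation $\beta : \pi_1(M_3) \to \GL_2(\C)$ (e.g.\ a unitary one, or a representation into $\SL_2(K)$) and show that the Fox-derivative matrix of Proposition~\ref{Prop:tautness} has nonzero determinant on $\beta$. By Friedl--Kim this simultaneously certifies $M_3$ as a twisted homology product and establishes tautness. For the higher genus cases $g > 3$, I would form $M_g$ by attaching $g-3$ sutured one-handles to $M_3$ as in Lemma~\ref{Lem:addhandle}. That lemma preserves tautness, and both $\pi_1(M_g) = \pi_1(M_3) * F_{g-3}$ and $\pi_1(R_\pm^{M_g}) = \pi_1(R_\pm^{M_3}) * F_{g-3}$ gain only free factors. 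Hence the word $w$ sitting in $\pi_1(R_+^{M_3}) \cap \pi_1(M_3)^{(2)} \setminus \pi_1(R_+^{M_3})^{(1)}$ continues to realize the obstruction in $M_g$, and Lemma~\ref{Lem:1Drep} rules out any one-dimensional certifying representation.

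The main obstacle is the genus-three construction, which must satisfy two competing requirements. On the one hand, the embedding $i_*$ must be intricate enough that some generator of $\pi_1(R_+)$ lands in $F_3^{(2)}$; forcing both the abelianization and the augmentation-ideal conditions on the Fox derivatives typically requires a rather long suture word, so the boundary pattern cannot be too simple. On the other hand, the resulting sutured manifold must genuinely be taut, not merely balanced; I would verify this via Proposition~\ref{Prop:tautness} by exhibiting a two-dimensional test representation whose Fox-derivative determinant is nonzero (and likewise for its dual, to avoid invoking self-duality). Balancing these demands — engineering a suture that produces an element of $F_3^{(2)}$ inside $\pi_1(R_+)$ while still admitting a higher-dimensional certifying representation — is the delicate part of the argument.
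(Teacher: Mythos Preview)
Your proposal is correct and follows essentially the same approach as the paper: invoke Lemma~\ref{Lem:1Drep} as the obstruction, build an explicit genus-three example whose $R_+$ carries a simple closed curve mapping into $\pi_1(M)^{(2)}$, certify tautness via a two-dimensional (self-dual) representation using Proposition~\ref{Prop:tautness}, and then propagate to higher genus with Lemma~\ref{Lem:addhandle}. The paper's particular element of $F_3^{(2)}$ is $[[x,y][x^{-1},y],\,z[y^{-1},x][y,x]z^{-1}]$ rather than $[[x,y],[x,z]]$, but this is exactly the kind of ``similar representative'' you anticipated.
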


\begin{proof}
Lemma~\ref{Lem:1Drep} shows it suffices to produce a taut example $(M_g, R_\pm,\gamma)$ with $R_+$ containing a curve whose image in $\pi_1(M_g)$ lies in $\pi_1(M_g)^{(2)}$. We will first construct such an example for $g=3$. Let $M$ be a genus-$3$ handlebody, with $\pi_1(M)=\langle x,y,z\rangle$.

\begin{figure}
  \centering
  \begin{subfigure}[b]{0.475\linewidth}
    \includegraphics[width=\linewidth]{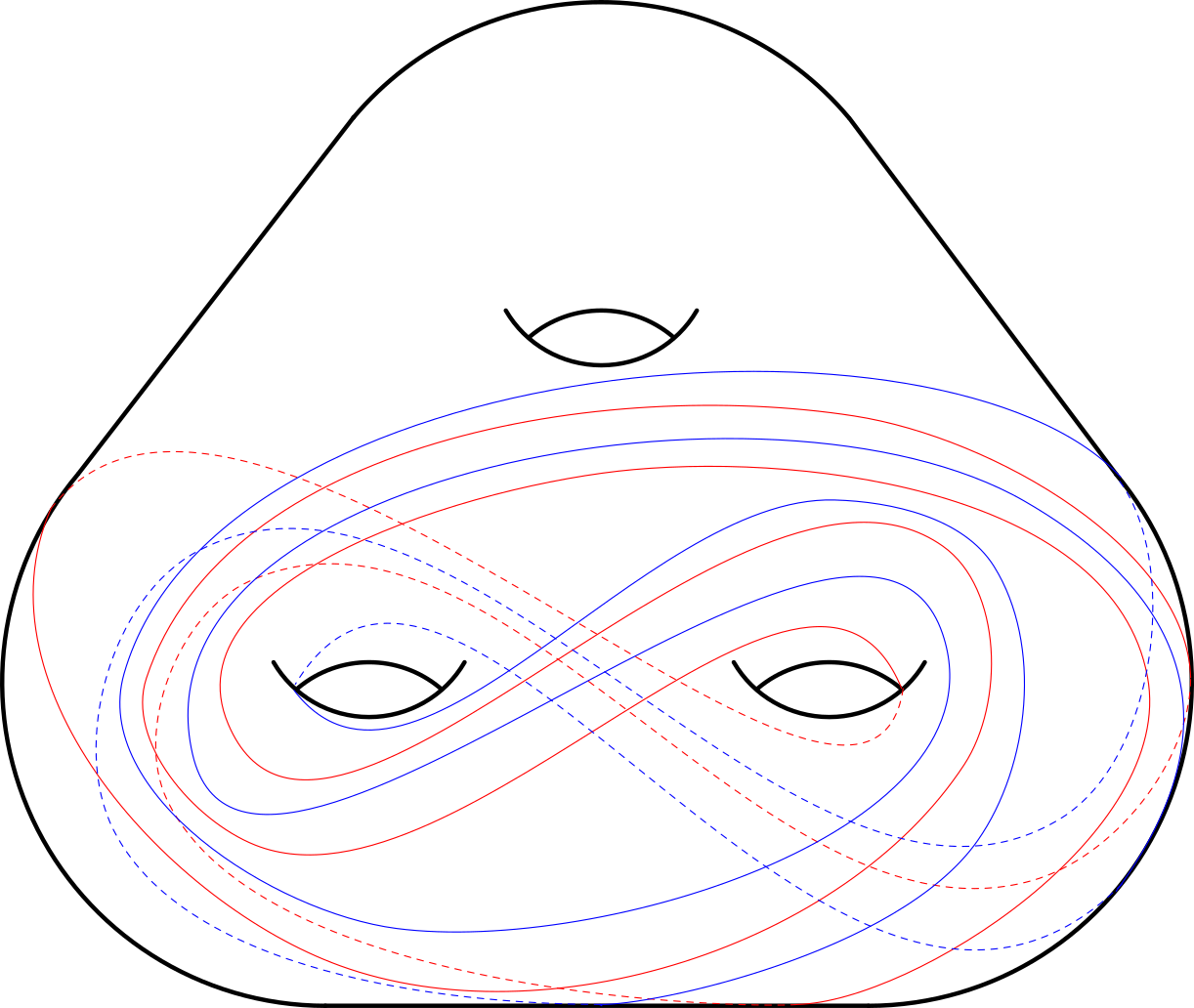}
    \caption{Disjoint curves $A$, $B$ with image in $\pi_1(M)^{(1)}$.}
  \end{subfigure}\hspace{5mm}
  \begin{subfigure}[b]{0.475\linewidth}
    \includegraphics[width=\linewidth]{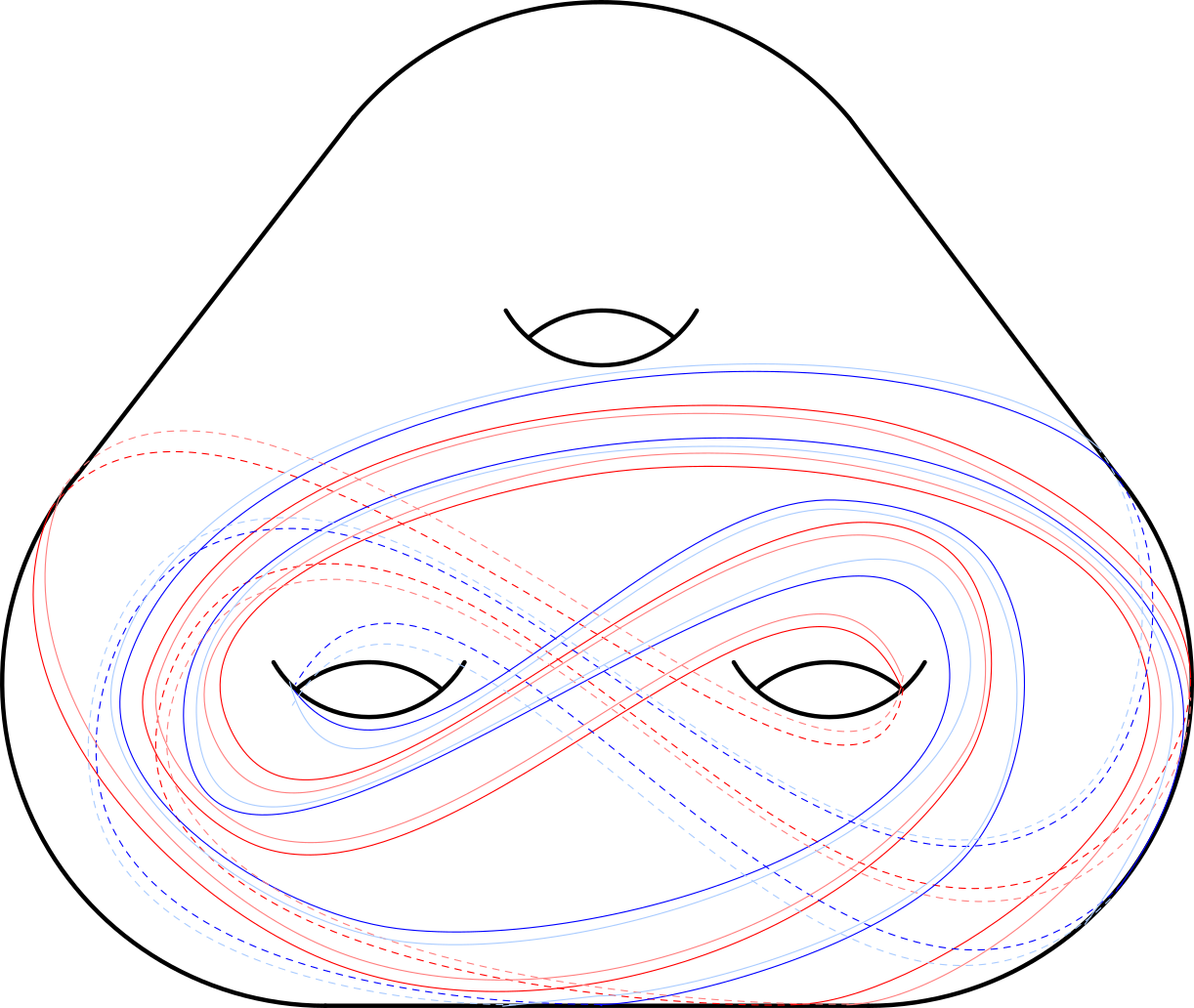}
    \caption{$A$, $B$ doubled and pushed off themselves.}\label{Fig:g3exb}
  \end{subfigure}
  \begin{subfigure}[b]{0.475\linewidth}\vspace{5mm}
    \includegraphics[width=\linewidth]{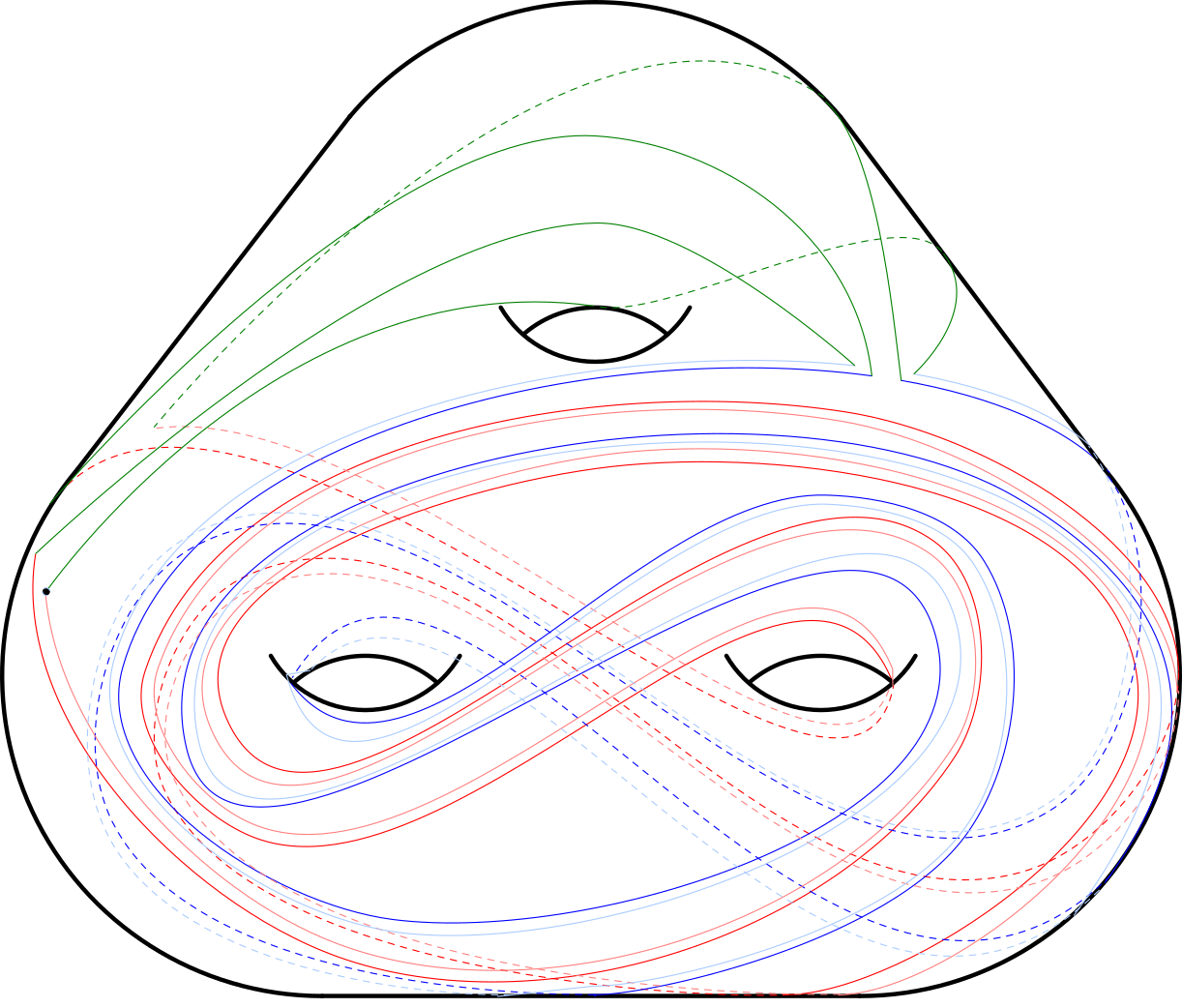}
    \caption{Copies of $A$ and $B$ connected by segments $z$.}\label{Fig:g3exc}
  \end{subfigure}\hspace{5mm}
  \begin{subfigure}[b]{0.475\linewidth}
    \includegraphics[width=\linewidth]{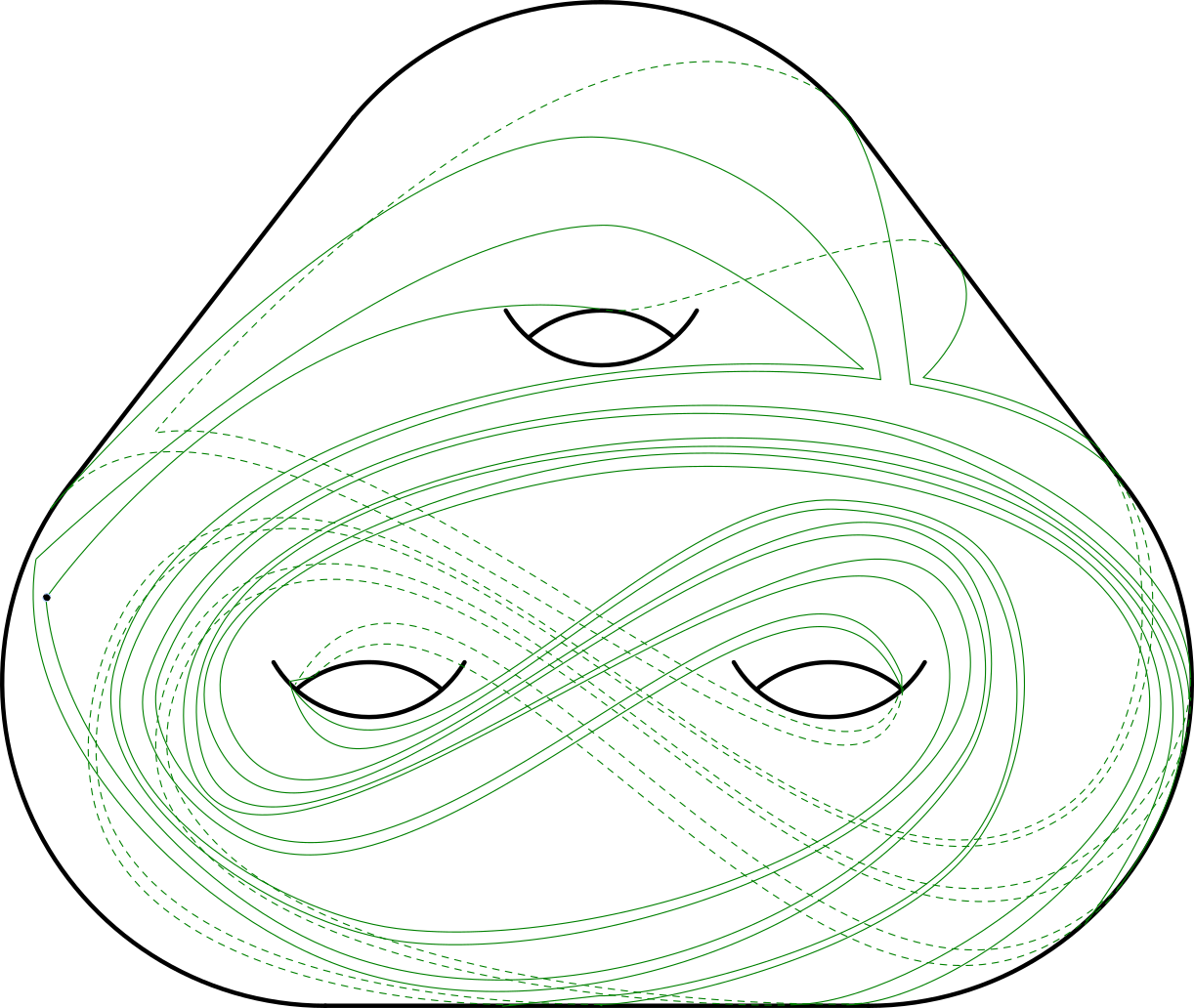}
    \caption{The curve $a=[A, z B z^{-1}]$.}
  \end{subfigure}
  \caption{Construction of the curve $a$.}
  \label{Fig:g3ex}
\end{figure}

\begin{figure}
  \includegraphics[width=.85\linewidth]{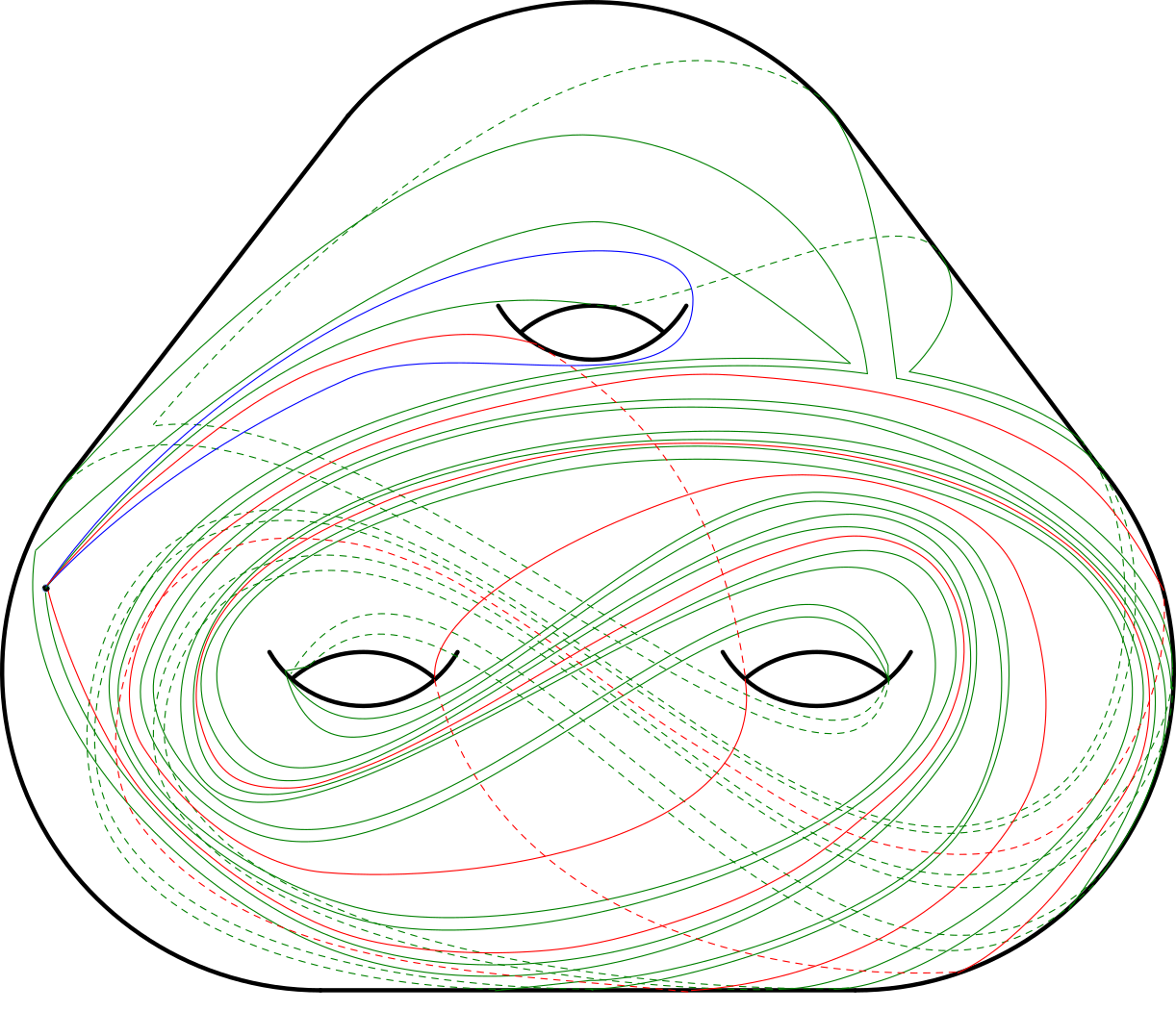}
  \caption{Curves $a$, $b$, and $c$.}
  \label{Fig:g3ex2}
\end{figure}

To describe the sutured structure on $M$, we begin by constructing a simple closed curve $a$ on the boundary of $M$ which lives in $\pi_1(M)^{(2)}$. Figure~\ref{Fig:g3ex} illustrates this process. First, we draw the curves $A$ and $B$, which are disjoint and have image in $\pi_1(M)^{(1)}$. The curve $a$ is constructed from $A$ and $B$ to have image $a=[A, z B z^{-1}]\in \pi_1(M)$. Figures~\ref{Fig:g3exb} and \ref{Fig:g3exc} show this construction, by first taking two copies of each $A$ and $B$, and then connecting them via arcs to yield a simple closed curve with the desired image. Picking a basepoint along $a$, we then find two more simple closed curves $b$ and $c$ on $\partial M$, disjoint away from the basepoint, as shown in Figure~\ref{Fig:g3ex2}. This captures all the information we need to define $(M, R_\pm, \gamma)$: a neighborhood of this defines $R_+$, which is homeomorphic to $\Sigma_{1,2}$, its boundary $\gamma$, and its complement $R_-$. From the construction, we see
\[
\tn{Im}(\pi_1(R_+))=\langle \left[[x,y][x^{-1},y],z[y^{-1},x][y,x]z^{-1}\right], [x,y][y^{-1},x^{-1}], z\rangle.
\]

We now check our example is taut. We do this by exhibiting a two-dimensional representation $\beta: \pi_1(M) \to \GL_2(\C)$ which realizes $M$ as a twisted homology product. Define $\beta$ as follows:
\[
\beta(x)=
\begin{pmatrix}
	1	&	1	\\
	0	&	1	
\end{pmatrix},
\qquad
\beta(y)=
\begin{pmatrix}
	0	&	1	\\
	-1	&	0
\end{pmatrix},
\qquad
\beta(z)=
\begin{pmatrix}
	1	&	0	\\
	0	&	1
\end{pmatrix}.
\]
In fact $\beta$ is a representation to $U(2)$, so the associated twisted homology is self-dual. Then we can apply Proposition~\ref{Prop:tautness}. The relevant matrix is
\[
{\setstretch{2}
\begin{pmatrix}
	\beta\big(\partial_x i_*(a)\big)	&	\beta\big(\partial_y i_*(a)\big)	&	\beta\big(\partial_z i_*(a)\big)	\\
	\beta\big(\partial_x i_*(b)\big)	&	\beta\big(\partial_y i_*(b)\big)	&	\beta\big(\partial_z i_*(b)\big)	\\
	\beta\big(\partial_x i_*(c)\big)	&	\beta\big(\partial_y i_*(c)\big)	&	\beta\big(\partial_z i_*(c)\big)	
\end{pmatrix}}
=
\begin{pmatrix}
	-18	&	-12	&	48	&	-1	&	13	&	-16	\\
	29	&	19	&	-76	&	2	&	-20	&	25	\\
	0	&	-2	&	-4	&	3	&	0	&	0	\\
	1	&	3	&	4	&	-4	&	0	&	0	\\
	0	&	0	&	0	&	0	&	1	&	0	\\
	0	&	0	&	0	&	0	&	0	&	1
\end{pmatrix},
\]
which is invertible, so $M$ is indeed taut.

We may iteratively apply Lemma~\ref{Lem:addhandle} to construct higher genus handlebodies from this example. The process in the Lemma gives a handlebody $M_g$ for all $g>3$ which is still a two-dimensional twisted homology product. Moreover, the obstruction to finding a tenable one-dimensional representation persists, in that the image of the curve $a$ in $\pi_1 (M_g)$ does not change.
\end{proof}

\section{Restricting to solvable representations}\label{s:solv}

We restrict to the case of solvable representations. A group $G$ is {\em solvable} if its derived series $G^{(k)}=[G^{(k-1)},G^{(k-1)}]$ has finite length. For $G$ solvable, let $K$ denote the length of this series, that is, the smallest index $k$ such that $G^{(k)}=1$. Then $G$ is solvable of {\em degree $K$}. This is equivalent to realizing $G$ as a $K$-fold abelian extension of an abelian group. We say a representation $\alpha: G \to \tn{GL}(V)$ is {\em solvable} if it has solvable image, and similarly define the {\em degree of solvability} of $\alpha$ to be the degree of solvability of its image.

In this section, we prove the following.

\begin{theorem}\label{Thm:solv} 
For any $K$, there is a taut sutured handlebody $M_K$ which fails to be a twisted homology product for any solvable representation of degree less than $K$.
\end{theorem}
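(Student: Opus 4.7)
The plan is to extend the strategy of Theorem~\ref{Thm:hg} one level deeper into the derived series at each stage.

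First, I would generalize Lemma~\ref{Lem:1Drep}: \emph{if $\alpha:\pi_1(M)\to \GL(V)$ is solvable of degree at most $K-1$, then every twisted cocycle $f\in Z^1(\pi_1(M); E_\alpha)$ vanishes on $\pi_1(M)^{(K)}$.} The proof is immediate: solvability of degree $\leq K-1$ forces $\alpha$ to be trivial on $\pi_1(M)^{(K-1)}$, so the cocycle identity $f(gh)=f(g)+\alpha(g)f(h)$ makes $f|_{\pi_1(M)^{(K-1)}}$ an ordinary homomorphism into the abelian group $V$, which must vanish on the commutator subgroup $\pi_1(M)^{(K)}$. Running the long exact cohomology sequence of $(M,R_\pm)$ exactly as in the proof of Lemma~\ref{Lem:1Drep} then yields the generalized obstruction: if $M$ is an $\alpha$-homology product for some solvable $\alpha$ of degree at most $K-1$, and $w\in \pi_1(R_\pm)$ has image in $\pi_1(M)^{(K)}$, then $w$ must lie in the kernel of every twisted cocycle on $\pi_1(R_\pm)$; in particular $w$ cannot be a free generator of $\pi_1(R_\pm)$.

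So it suffices to construct, for each $K$, a taut sutured handlebody $M_K$ containing a simple closed curve $a_K$ on $R_+$ which is a free generator of $\pi_1(R_+)$ but whose image in $\pi_1(M_K)$ lies in $\pi_1(M_K)^{(K)}$. I would do this by iterating the construction in the proof of Theorem~\ref{Thm:hg}. The base case $K=2$ produced $a=[A,zBz^{-1}]$ with $A,B\in \pi_1^{(1)}$, landing in $\pi_1^{(2)}$. At each subsequent stage, double the current deepest curve, introduce a fresh handle to carry a conjugating arc, and form a new commutator landing one level deeper in the derived series; after $K-1$ iterations one obtains a handlebody of controlled genus $g(K)$ containing a simple closed curve $a_K$ whose image is a $(K-1)$-fold iterated commutator in $\pi_1(M_K)^{(K)}$. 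The sutured structure is completed as in Section~\ref{s:hg}, by picking further simple closed curves through a common basepoint on $a_K$ whose regular neighborhood defines $R_+$.

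The principal obstacle is verifying tautness of each $M_K$. For $K=2$, Theorem~\ref{Thm:hg} handled this via an explicit $U(2)$ representation and a direct $6\times 6$ Fox determinant; for arbitrary $K$ the analogous explicit witness becomes unmanageable. The cleanest plan is structural: arrange the iterative construction so that each stage attaches a sutured one-handle in the sense of Lemma~\ref{Lem:addhandle} to the previously-built $M_{K-1}$, inheriting tautness automatically, with the final verification reducing to the taut base case established in Theorem~\ref{Thm:hg}. Ensuring that the doubling-and-conjugating step can genuinely be realized as a sutured handle attachment --- rather than a more invasive modification that might compromise tautness --- is the delicate bookkeeping at the heart of the construction.
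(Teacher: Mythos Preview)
Your obstruction argument is correct and essentially equivalent to the paper's: where you generalize Lemma~\ref{Lem:1Drep} via twisted cocycles, the paper records the same fact as Lemma~\ref{Lem:solv} (the Fox derivative $\alpha(\partial g)$ vanishes for $g\in\pi_1(M)^{(K+1)}$ when $\alpha$ is solvable of degree $K$) and feeds it into Proposition~\ref{Prop:tautness}. Your inductive construction idea --- iterate the double-and-commute procedure from Theorem~\ref{Thm:hg} to push a free generator of $\pi_1(R_+)$ arbitrarily deep into the derived series --- is also the paper's idea, though the paper implements it by taking \emph{two} copies of $M_{K-1}$, joining them by a handle, forming $a=[a_1,a_2]$ from the two deep curves, and then adding a second handle so that $a$ becomes part of a free basis of $\pi_1(R_+)$.

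The genuine gap is in your tautness plan. A sutured one-handle attachment in the sense of Lemma~\ref{Lem:addhandle} adds a generator which is simultaneously free in $\pi_1(N)$ and in $\pi_1(R'_+)$; such a curve is never deep in the derived series of $\pi_1(N)$. The step that produces the commutator curve $a$ necessarily modifies the existing sutures (the old $c_i$ get conjugated or multiplied by $a_j^{\pm1}$ as in the paper's Figure~\ref{Fig:solv2}), so it cannot be realized as a Lemma~\ref{Lem:addhandle} attachment, and the hope of ``inheriting tautness automatically'' from $M_{K-1}$ via that lemma does not go through. The paper resolves this with a different tool: it exhibits an explicit taut sutured manifold decomposition
\[
M_K\xrightarrow{S_1} M \xrightarrow{S_2} M' \xrightarrow{S_3} M''\cup M_2
\]
cutting along the cocore of the second handle, then a once-punctured torus bounded by $a$, then two further disks, and checks via Fox calculus and Proposition~\ref{Prop:tautness} that the terminal piece $M''$ is taut exactly when $M_1=M_{K-1}$ is. You correctly flagged this as the delicate point, but the resolution requires the hierarchy machinery rather than Lemma~\ref{Lem:addhandle}.
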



Observe that Example~\ref{ex:gen2} and Theorem~\ref{Thm:ab} satisfy this Theorem for $K=1,2$, respectively. In the setting of $\tn{GL}_n(\C)$, Zassenhaus shows for a fixed $n$ any solvable subgroup is of bounded degree of solvability (\cite{zassenhaus}). Let $\varphi(K)$ denote the smallest $n$ for which $\tn{GL}_n(\C)$ admits a solvable subgroup of degree $K$.

\begin{corollary}[Theorem~\ref{Thm:dim}]\label{Cor:solv}
The handlebody $M_K$ is not a twisted homology product for any solvable representation to $\tn{GL}_n(\C)$ for $n<\varphi(K)$.
\end{corollary}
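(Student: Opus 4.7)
The plan is to reduce Corollary~\ref{Cor:solv} to Theorem~\ref{Thm:solv} using only the definition of $\varphi(K)$ together with an elementary fact about derived series. The key observation is that if $\tn{GL}_n(\C)$ contains any solvable subgroup of derived length $\ge K$, then it also contains one of derived length exactly $K$; combined with the definition of $\varphi(K)$ as the smallest such $n$, this forces every solvable representation into $\tn{GL}_n(\C)$ with $n < \varphi(K)$ to have degree strictly less than $K$, at which point Theorem~\ref{Thm:solv} applies directly.

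Concretely, I would take an arbitrary solvable representation $\alpha : \pi_1(M_K) \to \tn{GL}_n(\C)$ with $n < \varphi(K)$ and consider its image $H = \tn{Im}(\alpha)$, which is a solvable subgroup of $\tn{GL}_n(\C)$ of some derived length $K'$. If $K' \ge K$, then setting $H' = H^{(K'-K)}$ yields a subgroup of $\tn{GL}_n(\C)$ for which the elementary derived-series identity $(H^{(i)})^{(j)} = H^{(i+j)}$ (proved by a straightforward induction on $j$) gives $(H')^{(K)} = H^{(K')} = 1$ while $(H')^{(K-1)} = H^{(K'-1)} \ne 1$. Hence $H'$ would be a solvable subgroup of $\tn{GL}_n(\C)$ of derived length exactly $K$, contradicting the minimality of $\varphi(K)$. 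Therefore $K' < K$, i.e., $\alpha$ is a solvable representation of degree less than $K$, and Theorem~\ref{Thm:solv} concludes that $M_K$ is not an $\alpha$-homology product.

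There is no real obstacle here; the deduction is a one-step consequence of Theorem~\ref{Thm:solv} combined with the definition of $\varphi(K)$. The genuinely non-trivial content lies elsewhere, namely in Theorem~\ref{Thm:solv} itself and in Zassenhaus's bound~\cite{zassenhaus}, which guarantees both that $\varphi(K)$ is finite for each $K$ (so the hypothesis $n < \varphi(K)$ is non-vacuous) and that $\varphi(K) \to \infty$ as $K \to \infty$ (so the dimension constraint genuinely strengthens with $K$, as claimed in Theorem~\ref{Thm:dim}).
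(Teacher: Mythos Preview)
Your proposal is correct and matches the paper's approach: the paper gives no explicit proof of this corollary, treating it as immediate from Theorem~\ref{Thm:solv} together with the definition of $\varphi(K)$. The only detail you supply that the paper leaves implicit is the observation that a solvable subgroup of $\tn{GL}_n(\C)$ of derived length $K' \ge K$ would yield one of derived length exactly $K$ via $H^{(K'-K)}$, so $n < \varphi(K)$ forces every solvable image to have degree strictly less than $K$; this is exactly the intended one-line deduction.
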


In particular, the conjecture of Agol and Dunfield is false when restricted to the class of solvable representations.





The next lemma captures the connection between solvability of a representation and its behavior with respect to the Fox derivative. 

\begin{lemma}\label{Lem:solv}
If $\alpha: G\to \tn{GL}(V)$ is solvable of degree $K$, then $\alpha(\partial g)=0$ for any $g\in G^{(K+1)}$.
\end{lemma}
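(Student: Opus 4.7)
The plan is to reduce the claim to a Fox-calculus computation on a single commutator and then extend multiplicatively. The first step is to extract the key consequence of solvability: by hypothesis, $(\alpha(G))^{(K)} = 1$, and since $\alpha$ is a homomorphism, $\alpha(G^{(K)}) = (\alpha(G))^{(K)} = 1$. Thus $\alpha$ is trivial on $G^{(K)}$, and in particular on $G^{(K+1)} \subseteq G^{(K)}$. I will use this repeatedly below.

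The main computation handles a single commutator. For any $u, v \in G$, iterating the product and inverse rules for Fox derivatives yields
\[
\partial_{x_i}[u,v] = (1 - uvu^{-1})\,\partial_{x_i} u + (u - [u,v])\,\partial_{x_i} v.
\]
When $u, v \in G^{(K)}$, so that $\alpha(u) = \alpha(v) = \alpha([u,v]) = 1$, both coefficients collapse to $0$ under $\alpha$, giving $\alpha(\partial_{x_i}[u,v]) = 0$ for every $i$, and hence $\alpha(\partial [u,v]) = 0$.

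To extend to arbitrary $g \in G^{(K+1)}$, I would induct on length. Since $G^{(K+1)} = [G^{(K)}, G^{(K)}]$, any such $g$ is a product of commutators of the form above (using $[u,v]^{-1} = [v,u]$ to dispense with inverses). For $g_1, g_2 \in G^{(K+1)} \subseteq G^{(K)}$, we have $\alpha(g_1) = 1$, so the product rule collapses to additivity:
\[
\alpha(\partial(g_1 g_2)) = \alpha(\partial g_1) + \alpha(g_1)\,\alpha(\partial g_2) = \alpha(\partial g_1) + \alpha(\partial g_2),
\]
and induction on the length of the commutator decomposition then yields the claim.

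The main obstacle is really just the commutator calculation itself; everything else is formal bookkeeping with Fox-calculus identities. The one subtle point is the gap between $G^{(K+1)}$ and $G^{(K)}$: to make both coefficients of $\partial_{x_i}[u,v]$ vanish simultaneously under $\alpha$, we genuinely need $\alpha$ to be trivial on the strictly larger group $G^{(K)}$, which is precisely what the degree-$K$ solvability hypothesis provides, and which is why the statement is phrased in terms of $G^{(K+1)}$ rather than $G^{(K)}$.
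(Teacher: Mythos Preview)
Your proof is correct and follows essentially the same approach as the paper: compute the Fox derivative of a single commutator $[u,v]$ with $u,v\in G^{(K)}$, observe that the coefficients vanish under $\alpha$ because $\alpha$ is trivial on $G^{(K)}$, and then extend to all of $G^{(K+1)}$. Your commutator formula is just a regrouping of the paper's four-term expansion, and you are slightly more explicit than the paper in spelling out the product-rule induction that passes from single commutators to arbitrary products.
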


\begin{proof}
We show this holds for $g=[g_1,g_2]$ where $g_1,g_2\in G^{(K)}$; as elements of this form generate $G^{(K+1)}$, this suffices. Recall 
\[
\partial g = \partial g_1 + g_1 \partial g_2 - g_1 g_2 g_1^{-1} \partial g_1 - g_1 g_2 g_1^{-1} g_2 ^{-1} \partial g_2.
\]
As $\alpha(g_1)=\alpha(g_2)=1$, thus
\[
\alpha(\partial g)=\alpha(\partial g_1) + \alpha(\partial g_2) - \alpha(\partial g_1) - \alpha(\partial g_2) = 0. \qedhere
\]
\end{proof}

The idea of the proof of Theorem~\ref{Thm:solv} is to construct sutured manifolds which carry curves deeper and deeper in the derived series of the manifold's fundamental group, thereby allowing us to exploit this property of the Fox derivative. The construction of these curves follows the same ``double-then-cut-and-paste'' method we use in the proof of Theorem~\ref{Thm:ab} to build a curve in $(\pi_1(M))^{(2)}$.

\begin{proof}[Proof of \ref{Thm:solv}]
We construct the manifolds $M_K$ by induction on $K$. We make the following assumptions on $M_{K-1}$:
\begin{enumerate}
	\item The suture set $\gamma$ consists of a curve $\gamma$. We realize $R_+$ as a closed neighborhood of $g$ simple closed curves $c_1,\ldots c_g$ disjoint away from a common basepoint;
	\item Some curve $c_i$ has image in $\pi_1(M_{K-1})^{(K-1)}\le\pi_1(M_{K-1})$.
\end{enumerate}
Let $M_1$ and $M_2$ be two copies of $M_{K-1}$, and let $a_1$ and $a_2$ denote the curves from condition (2). As the sutures are single curves, there is some $c_i$ in each with geometric intersection $i(c_i,a_j)=1$; denote these by $b_1$ and $b_2$. We first construct an intermediate handlebody $M'_{K}$, by joining $M_1$ and $M_2$ by a one-handle $H_1=D^2\times D^1$ such that the disks $D^2\times\partial D^1$ are identified with disks disjoint from all the curves $c_i$. Then $\pi_1(M_K)=\pi_1(M_1)*\pi_1(M_2)$. Apply the procedure from the proof of Theorem~\ref{Thm:ab} to $a_1$ and $a_2$, as illustrated in Figure~\ref{Fig:solv1}, to construct a curve $a$ whose image in $\pi_1(M'_K)$ is $[a_1,a_2]$, and therefore lies in $\pi_1(M'_{K})^{(K)}$. We fix a basepoint along an arc of $a$ within $H_2$.

\begin{figure}
  \includegraphics[width=300pt]{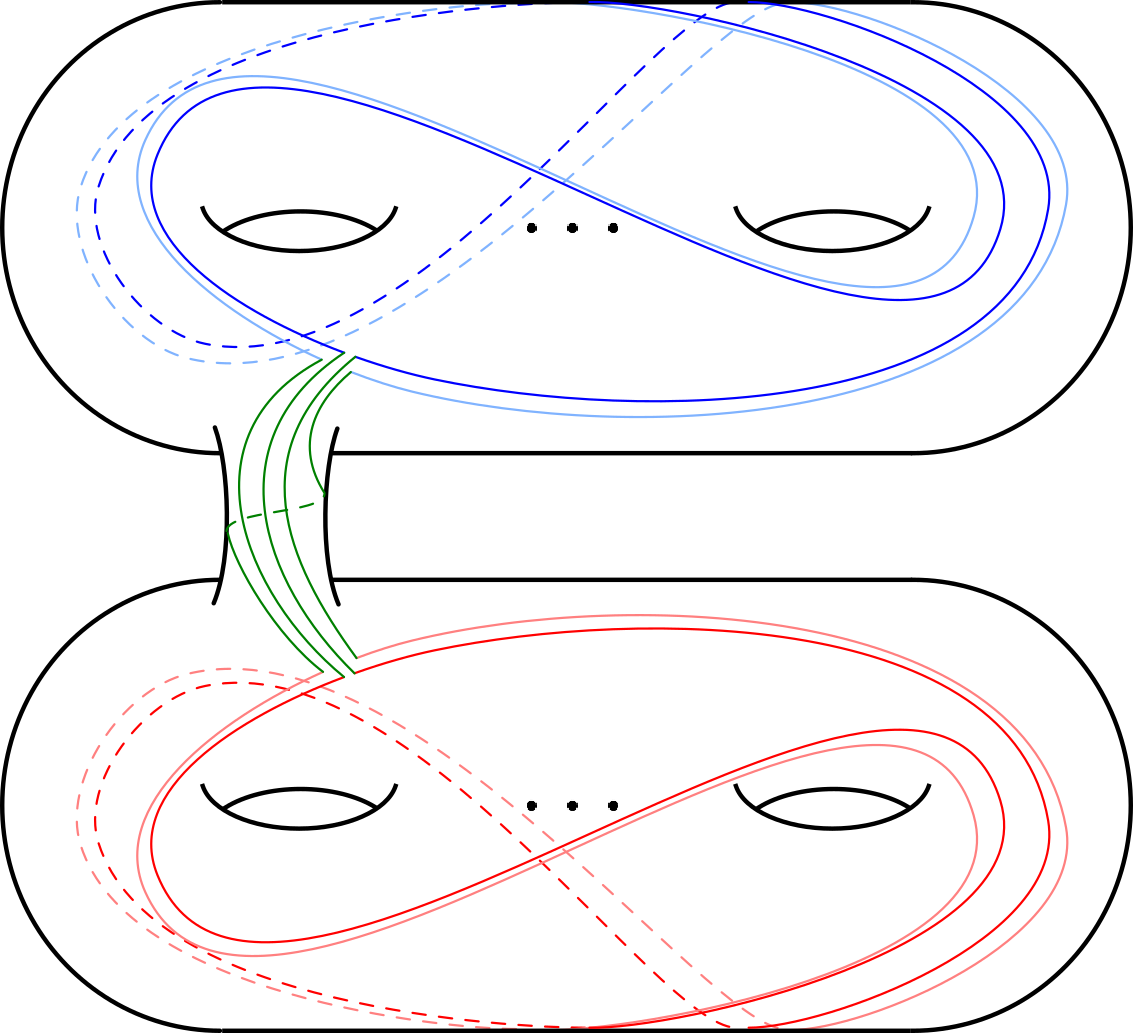}
  \caption{The curve $a$ constructed from $a_1$ and $a_2$.}
  \label{Fig:solv1}
\end{figure}

To obtain $M_K$, we add an additional one-handle $H_2=D^2\times D^1$ to $M'_K$ by attaching the disks $D^2\times\partial D^1$ within a small neighborhood of the basepoint, to either side of the locally separating arc of $a$.

\begin{figure}
  \includegraphics[width=300pt]{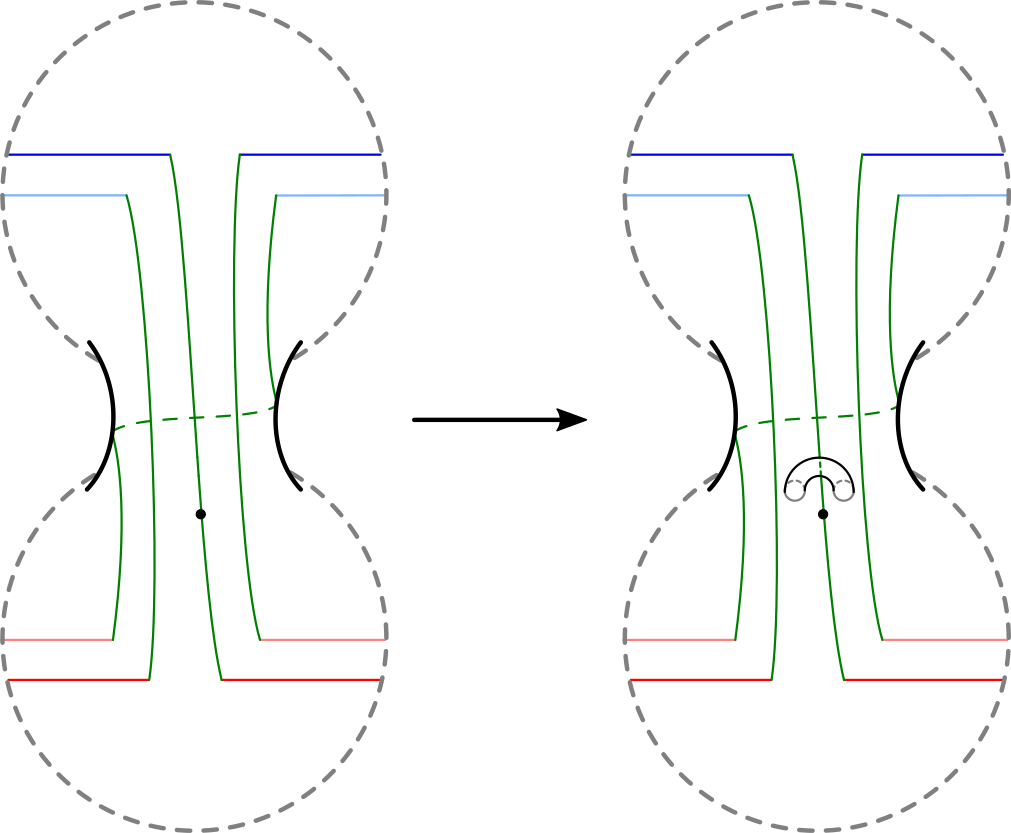}
  \caption{Adding the second one-handle $H_2$ to obtain $M_K$.}
  \label{Fig:solv3}
\end{figure}

To the collection of curves $c_i$ in $\partial M_K$, we add a new curve $c$ which runs around this second handle, parallel to its core, and intersecting $a$ in exactly the basepoint. The remaining curves $c_i$ may intersect $a$. We modify them as illustrated in Figure~\ref{Fig:solv2}. Notice this procedure alters the $\pi_1(M_K)$-image of a curve in one of the following ways:
\begin{itemize}
	\item[] $c_i \mapsto c_i$ \hfill (Figure~\ref{Fig:solv2a})
	\item[] $c_i \mapsto a_j c_i a_j^{-1}$ \hfill (Figure~\ref{Fig:solv2b})
	\item[] $c_i \mapsto c_i a_j^{-1}$ \hfill (Figure~\ref{Fig:solv2c})
\end{itemize}
These curves are once more disjoint away from a basepoint, as Figure~\ref{Fig:solv2d} suggests. While not all combinatorial arrangements of curves are shown, the remaining cases are similar. We add one final curve $b=a_1ca_2$, which is also included in Figure~\ref{Fig:solv2d}, giving a total of $2g+1$ curves. Take a closed neighborhood of these as the new $R_+$ and its boundary as the suture set $\gamma$ defining a sutured structure on $M_{K}$. This construction shows $M_{K}$ satisfies the inductive conditions (1) and (2); in particular the curve $c$ ensures $\gamma$ is connected.

\begin{figure}
  \centering
  \begin{subfigure}[b]{0.475\linewidth}
    \includegraphics[width=\linewidth]{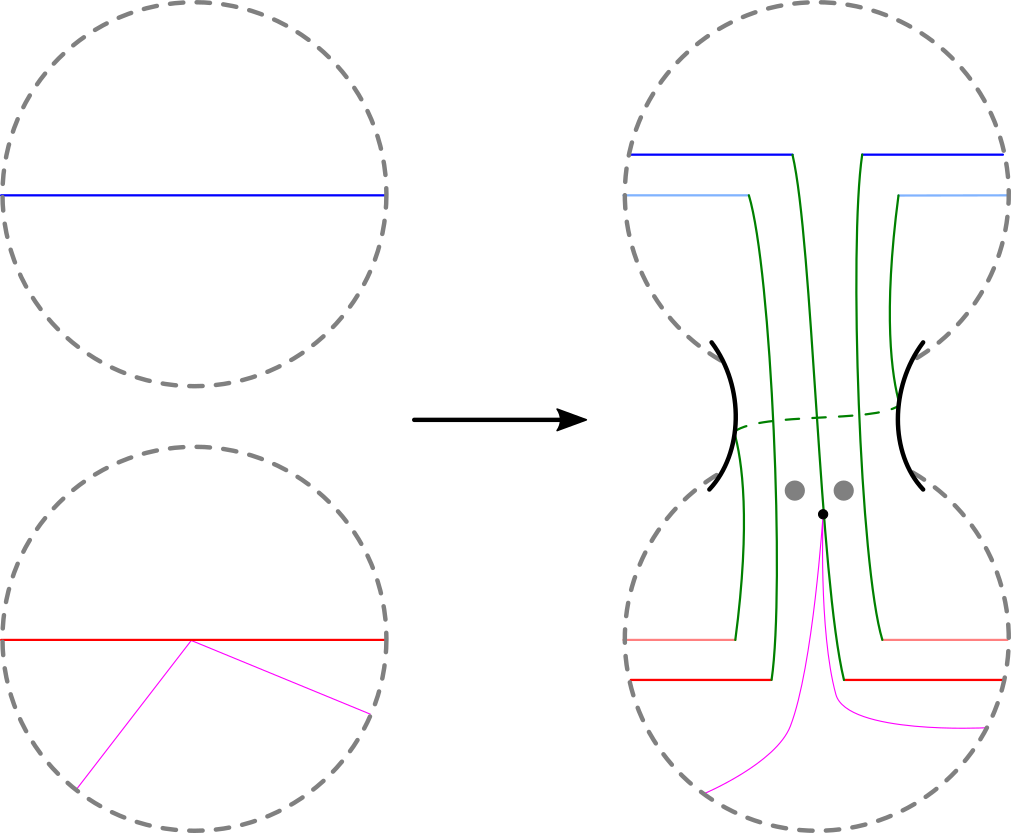}
    \caption{$c_i \mapsto c_i$.}\label{Fig:solv2a}
  \end{subfigure}\hspace{6mm}
  \begin{subfigure}[b]{0.475\linewidth}
    \includegraphics[width=\linewidth]{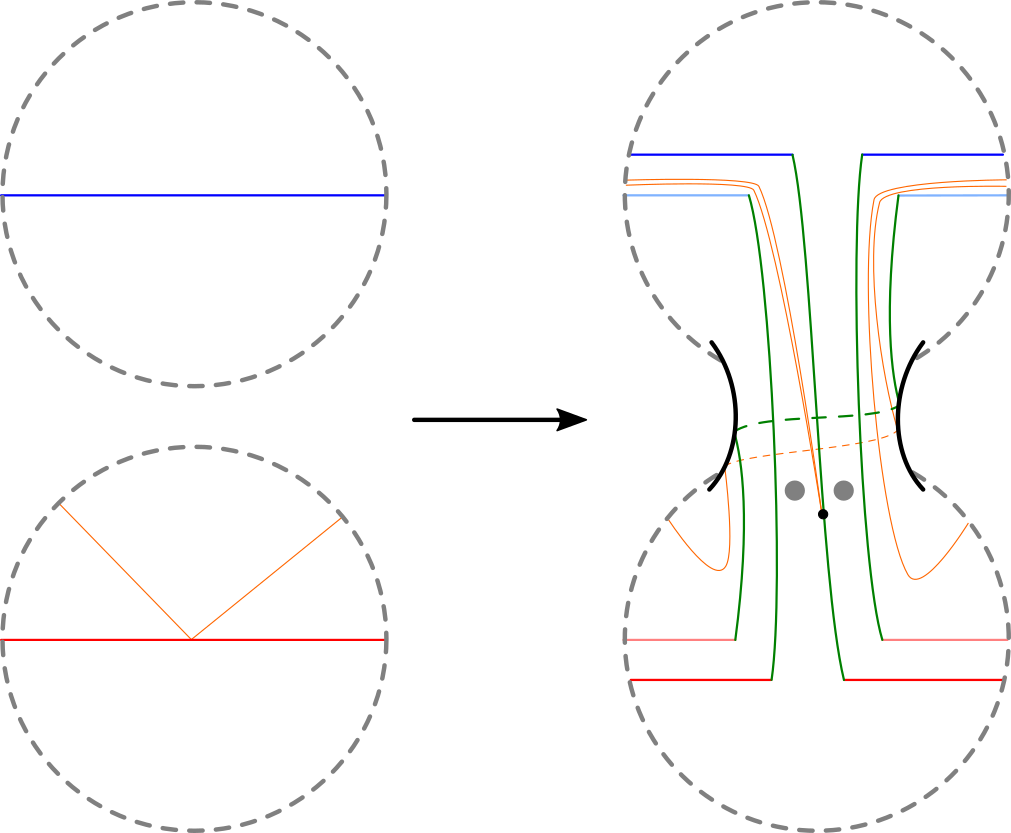}
    \caption{$c_i \mapsto a_j c_i a_j^{-1}$.}\label{Fig:solv2b}
  \end{subfigure}
  \begin{subfigure}[b]{0.475\linewidth}\vspace{5mm}
    \includegraphics[width=\linewidth]{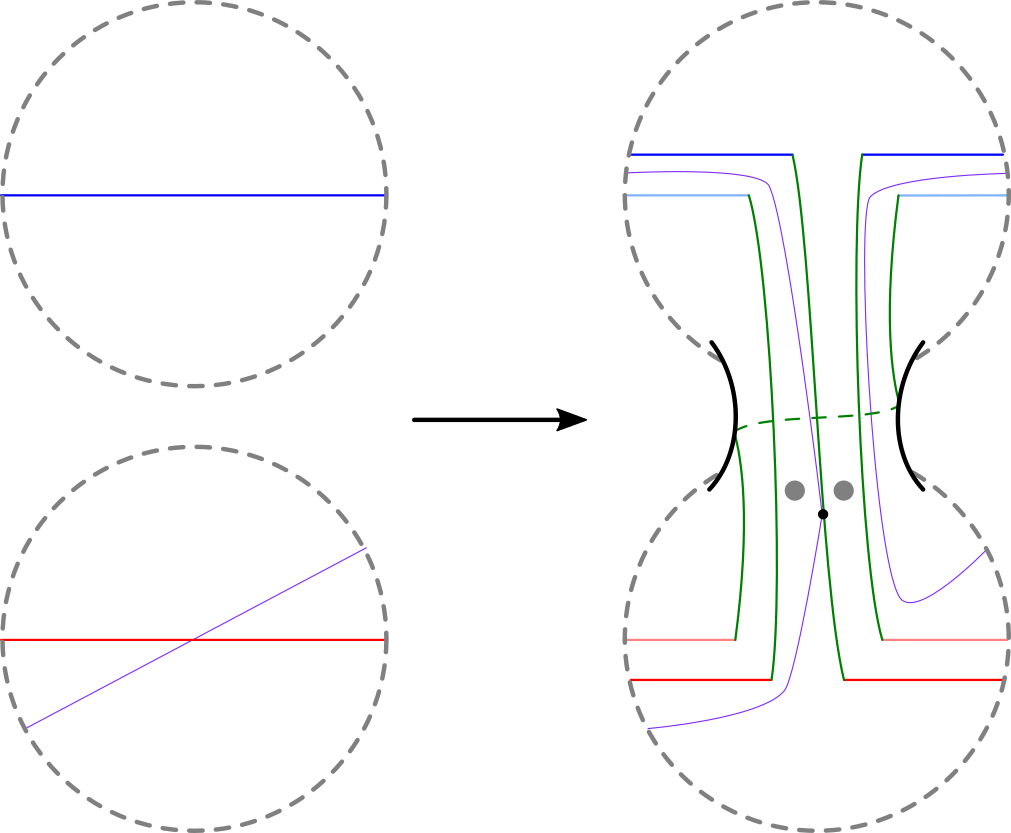}
    \caption{$c_i\mapsto c_i a_j^{-1}$.}\label{Fig:solv2c}
  \end{subfigure}\hspace{6mm}
  \begin{subfigure}[b]{0.475\linewidth}
    \includegraphics[width=\linewidth]{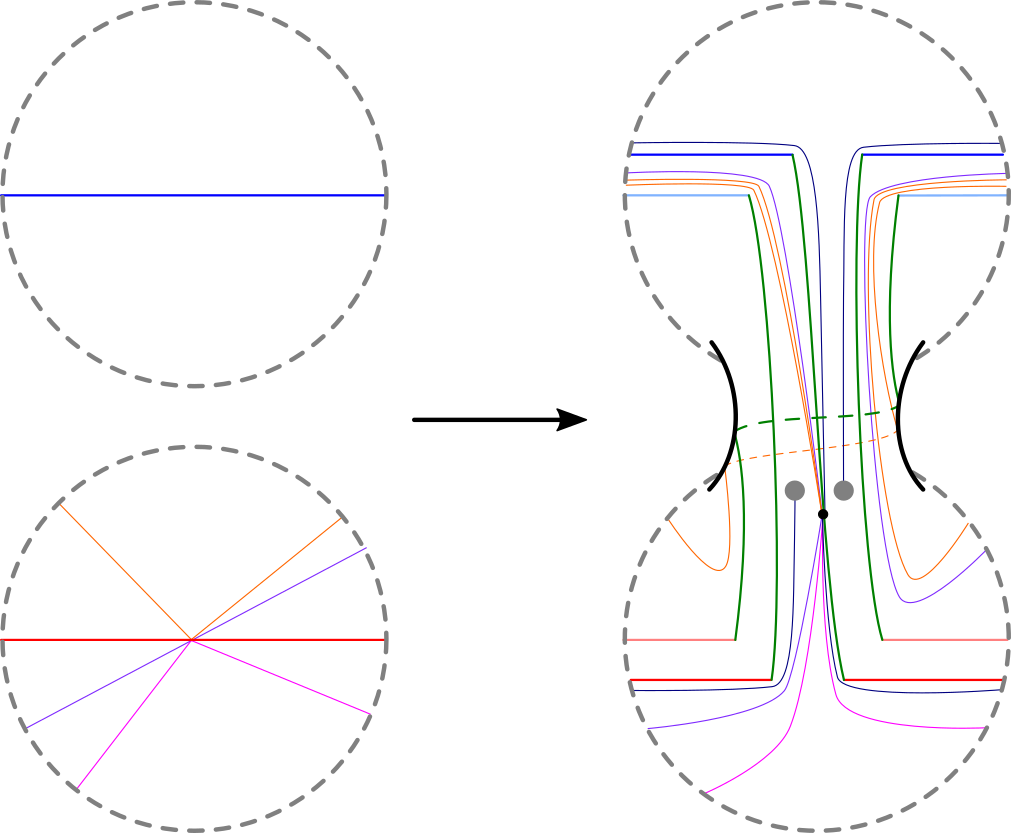}
    \caption{}\label{Fig:solv2d}
  \end{subfigure}
  \caption{Modifying the $c_i$ on $M_K$. The handle $H_2$ is not shown, but is attached at the points shown.}
  \label{Fig:solv2}
\end{figure}

To verify $M_K$ is taut, we exhibit a sutured manifold decomposition
\[
M_K\xrightarrow{S_1} M \xrightarrow{S_2} M' \xrightarrow{S_3} M''\cup M_2,
\]
where $M''$ is another taut sutured handlebody of genus $g$.\footnote{In fact this shows the intermediate manifolds are also taut, in particular $M$, which retains the obstruction to admitting a certifying solvable representation of derived length $K$.} This decomposition is illustrated in Figure~\ref{Fig:decomp2}, and described below.


\begin{figure}
  \centering
  \includegraphics[width=.75\linewidth]{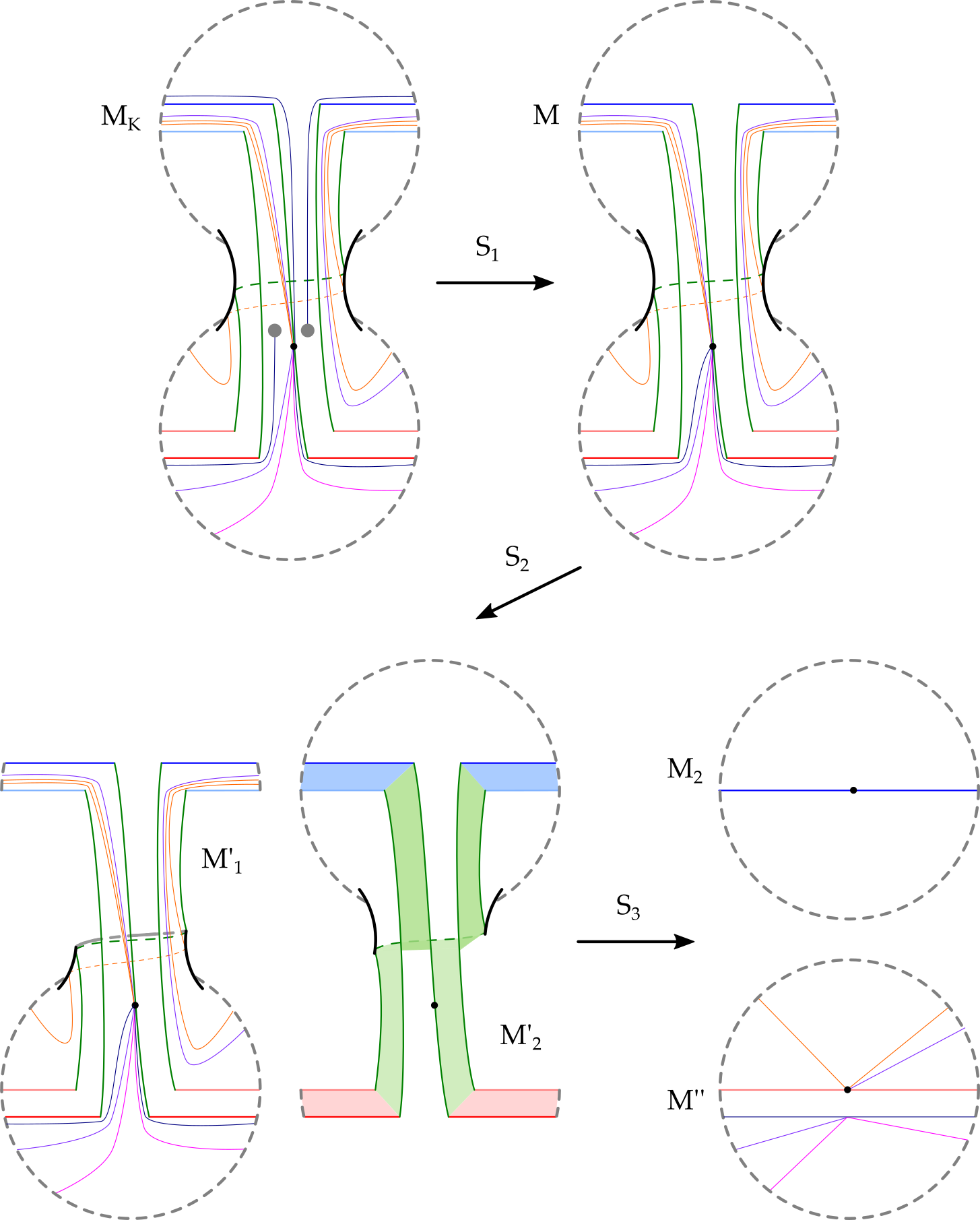}
  \caption{Decomposing $M_K$ into two taut handlebodies of genus $g$.}
  \label{Fig:decomp2}
\end{figure}

The surface $S_1$ is the disk $D^2\times\{\frac{1}{2}\}\subset H_2$. The decomposition kills $c$, and by choosing appropriate choice of orientation of $S_1$, the curve $b=a_1ca_2$ becomes $a_1$.

The surface $S_2$ is a once-punctured torus bound by the curve $a$. Topologically, it is the two strips between the two copies of $a_1$ and $a_2$ used to construct $a$, glued to the disk $D^2\times\{\frac{1}{2}\}\subset H_1$, then pushed slightly into the handlebody. Orient $S_2$ so that $M_2$ lies on the positive side of this disk. This separates $M$ into two genus $g+1$ handlebodies $M'_1$ and $M'_2$. Notice in $M'_2$, the two copies of $a_1$ used to construct $a$ are now parallel in $R_+$, and similarly the copies of $a_2$.

Finally, $S_3$ consists of two disks, each cutting one of the new handles created by the decomposition along $S_2$. Choose these disks to be oriented to agree with $a_1$ and $a_2$, respectively. Additionally, push them off the sutures where possible, to eliminate unnecessary intersections, by dragging the disks toward the basepoint.

In $M'_2$, this results in a disk which intersects the suture in exactly two points, cutting the $a_1$-bands in $R_\pm$. The remainder of the $c_i$ are unaffected, and so the resulting sutured manifold is $M_2$.

In $M'_1$, the situation is more complicated. This decomposition results in a handlebody whose sutured structure is similar to, but not exactly that of $M_1$. The subsurface $R_+$ has fundamental group with generators $c_1,\ldots, c_g$, with the exception of any curve $c_i$ with geometric intersection $i(c_i,a_1)=1$, such as $b_1$. In this case, $b_1$ is replaced by $b_1a_1b_1^{-1}$, and other such $c_i$ can be replaced by $c_ib_1^{-1}$. Notice that the existence of $b_1$ ensures that $R_+$ is connected. We observe, however, that this handlebody is taut exactly when $M_1$ is: on the level of Fox derivatives, this difference translates to
\begin{align*}
\alpha(\partial i_*(b_1))&\mapsto \alpha(\partial i_*(b_1a_1b_1^{-1}))= \alpha(1-i_*(b_1a_1b_1^{-1}))\alpha(\partial i_*(b_1))+\alpha(i_*(b_1))\alpha(\partial i_*(a_1)),\\
\alpha(\partial i_*(c_i)) & \mapsto \alpha(\partial i_*(c_ib_1^{-1}))=\alpha(\partial i_*(c_i))-\alpha(i_*(c_ib_1^{-1}))\alpha(\partial i_*(b_1)).
\end{align*}
In the matrix given by Proposition~\ref{Prop:tautness}, this demonstrates the matrix corresponding to $M''$ is obtained from that for $M_1$ via elementary row operations. This preserves invertibility, unless $\alpha(i_*(b_1a_1b_1^{-1}))=1$; in such a situation $\alpha$ may be perturbed away from this locus, yielding a certifying representation for both $M_1$ and $M''$.

Since $a\in \pi_1(M_K)^{(K)}$, by Lemma~\ref{Lem:solv}, the determinant in Proposition~\ref{Prop:tautness} vanishes for any solvable representation of degree less than $K$. Therefore $M_{K}$ is not a twisted homology product for any such representation.
\end{proof}

\newpage
\bibliographystyle{alpha}
\bibliography{bibel}

\begin{thebibliography}{Gab84}

\bibitem[AD15]{AD}
I.~Agol and N.~M. Dunfield.
\newblock Certifying the {T}hurston norm via {${\rm SL}(2,
  \mathbb{C})$}-twisted homology.
\newblock Pre-print, http://arxiv.org/abs/1501.02136, 2015.

\bibitem[Ago08]{Agol}
I.~Agol.
\newblock Criteria for virtual fibering.
\newblock {\em J.\ Topol.}, 1(2):269--284, 2008.

\bibitem[FK13]{FK13}
S.~Friedl and T.~Kim.
\newblock Taut sutured manifolds and twisted homology.
\newblock {\em Math.\ Res.\ Lett.}, 20(2):289--303, 2013.

\bibitem[Fox53]{Fox}
Ralph~H. Fox.
\newblock Free differential calculus. {I}. {D}erivation in the free group ring.
\newblock {\em Ann. of Math. (2)}, 57:547--560, 1953.

\bibitem[Gab83]{gabai-hierarchies}
David Gabai.
\newblock Foliations and the topology of {$3$}-manifolds.
\newblock {\em J. Differential Geom.}, 18(3):445--503, 1983.

\bibitem[Gab84]{gabai}
David Gabai.
\newblock Foliations and genera of links.
\newblock {\em Topology}, 23(4):381--394, 1984.

\bibitem[Hal59]{hall}
Marshall Hall, Jr.
\newblock {\em The theory of groups}.
\newblock The Macmillan Co., New York, N.Y., 1959.

\bibitem[Zas37]{zassenhaus}
Hans Zassenhaus.
\newblock Beweis eines satzes \"{u}ber diskrete gruppen.
\newblock {\em Abh. Math. Sem. Univ. Hamburg}, 12(1):289--312, 1937.

\end{thebibliography}

\end{document}